\documentclass[11pt]{amsart}
\usepackage[hmarginratio=1:1]{geometry}                
\geometry{letterpaper}                   
\usepackage{graphicx}
\usepackage{mathbbol}
\usepackage{amsmath}%
\usepackage{amsfonts}%
\usepackage{amssymb}
\usepackage{amsthm}
\usepackage[mathscr]{euscript}
\usepackage[utf8]{inputenc}
\usepackage{cite}
\usepackage{hyperref}
\usepackage[capitalize]{cleveref}
\usepackage[svgnames]{xcolor}
\usepackage{tikz}
\usetikzlibrary{matrix,arrows,decorations.markings}
\pagestyle{empty}

\pgfdeclarelayer{edgelayer}
\pgfdeclarelayer{nodelayer}
\pgfsetlayers{edgelayer,nodelayer,main}

\tikzstyle{none}=[inner sep=0pt]

\tikzstyle{hidden}=[rectangle,fill=White,draw=White]
\tikzstyle{antipode}=[circle,fill=White,draw=Black]
\tikzstyle{delta}=[up triangle,fill=White,draw=Black]
\tikzstyle{nabla}=[down triangle,fill=White,draw=Black]
\tikzstyle{up4}=[up4vertex,fill=White,draw=Black]
\tikzstyle{down4}=[down4vertex,fill=White,draw=Black]
\tikzstyle{square}=[rectangle,fill=White,draw=Black]
\tikzstyle{curPt}=[circle,fill=White,draw=Black]
\tikzstyle{arrow}=[->]
\tikzstyle{hookedarrow}=[right hook->]
\tikzstyle{dashedarrow}=[dashed,->]

%

\theoremstyle{plain}
\newtheorem{Theorem}{Theorem}[section]
\newtheorem{Lemma}{Lemma}[section]
\newtheorem{Corollary}{Corollary}[section]
\newtheorem{Proposition}{Proposition}[section]

\theoremstyle{definition}
\newtheorem{Definition}{Definition}[section]

\newtheorem{Example}{Example}[section]

\theoremstyle{remark}
\newtheorem{remark}{Remark}[section]



\numberwithin{equation}{section}

\crefname{pluralequation}{Eqs.}{Eqs.}
\Crefname{pluralequation}{Eqs.}{Eqs.}

%

\newcommand{\mbb}{\mathbb}

\newcommand{\mc}{\mathcal}
\newcommand{\on}{\operatorname}


\newcommand{\Cat}{{\mathscr{C}\mathrm{at}}}
\newcommand{\CSS}{CSS}

\newcommand{\Pre}{\mathscr{P}}
\newcommand{\Spc}{\mathscr{S}}
\newcommand{\LTop}{{\mathscr{LT}\mathrm{op}}}

\newcommand{\colim}{{\on{co}\lim}}

\newcommand{\BDelta}{\mathbb{\Delta}}
\newcommand{\BSigma}{\mathbb{\Sigma}}
\newcommand{\BLambda}{\mathbb{\Lambda}}
\newcommand{\ccorner}{\mathbin{\rotatebox[origin=c]{-135}{$\llcorner$}}}
\providecommand{\abs}[1]{\lvert#1\rvert}

\newcommand{\mmat}[2][3em]{\matrix (#2) [matrix of math nodes, row sep=#1,
  column sep=#1, text height=1.5ex, text depth=0.25ex]}
\tikzset{node distance=2cm, auto}

\DeclareFontFamily{U}{min}{}
\DeclareFontShape{U}{min}{m}{n}{<-> udmj30}{}
\newcommand\yon{\!\text{\usefont{U}{min}{m}{n}\symbol{'210}}\!}
\newcommand\yoneda{\text{\usefont{U}{min}{m}{n}\symbol{'210}\symbol{'155}\symbol{'140}}}

%

\title{The stack of higher internal categories and stacks of iterated spans.}
\author{David Li-Bland}

\begin{document}
\begin{abstract}

In this paper, we show that two constructions form stacks:
Firstly, as one varies the $\infty$-topos, $\mc{X}$, Lurie's homotopy theory of higher categories internal to $\mc{X}$ varies in such a way as to form a stack over the $\infty$-category of all $\infty$-topoi. 

Secondly, 
we show that Haugseng's construction of the higher category of iterated spans  in a given $\infty$-topos (equipped with local systems) can be used to define various stacks over that $\infty$-topos.

As a prerequisite to these results, we discuss properties which limits of $\infty$-categories inherit from the $\infty$-categories comprising the diagram. For example,  Riehl and Verity have shown that possessing (co)limits of a given shape is hereditary. Extending their result somewhat, we show that possessing Kan extensions of a given type is heriditary, and more generally that the adjointability of a functor is heriditary.

\end{abstract}
\maketitle
\tableofcontents
\section{Introduction}

Building upon the ideas of Rezk \cite{Rezk:2001ey} and Barwick \cite{Barwick:2005wl,Barwick:2011wl}, Lurie constructed a model for the homotopy theory of higher categories internal to an $\infty$-topos $\mc{X}$ \cite{Lurie:2009uz}. More precisely, he constructs an $\infty$-category $\CSS_k(\mc{X})$ of complete $k$-fold Segal objects in an arbitrary $\infty$-topos $\mc{X}$. Our first result (cf. Theorem~\ref{thm:CSSaSheaf}) in this paper is to show that the construction \begin{equation}\label{eq:CSSfunctor}\mc{X}\mapsto \CSS_k(\mc{X})\end{equation} satisfies a certain descent condition: 
suppose that $\mc{X}_i\to \mc{X}$ is an \'etale cover of $\mc{X}$ indexed by a small simplicial set $i\in I$, i.e. $$\mc{X}\cong \colim_{i\in I}\mc{X}_i,$$
then $$\CSS_k(\mc{X})\cong \lim_{i\in I}\CSS_k(\mc{X}_i).$$
In other words, \eqref{eq:CSSfunctor} defines a stack (cf. \cite[Notation~6.3.5.19]{Lurie:2009un}).

Given an $\infty$-category $\mc{C}$, Barwick \cite{Barwick:2013th} showed how to construct an $(\infty,1)$-category $\on{Span}(\mc{C})$ which has the same space of objects as $\mc{C}$, but whose morphisms between two objects $c_0,c_1\in \mc{C}$ is the space of diagrams in $\mc{C}$ of the form
$$\begin{tikzpicture}
 \node  (m-1-1)  at (0,1) {$x$};
 \node  (m-1-2)  at (1,0) {$c_0$};
 \node  (m-2-1)  at (-1,0) {$c_1$};
\draw[->] (m-1-1) -- (m-1-2);
\draw[->] (m-1-1) -- (m-2-1);
\end{tikzpicture}$$
That is, spans $c_0\nrightarrow c_1$ in $\mc{C}$.
Composition of two such morphisms is given by taking the fibred product. Given an $\infty$-topos $\mc{X}$ and a complete $k$-fold Segal object $X_{\bullet,\dots,\bullet}\in\CSS_k(\mc{X})$, Haugseng extends this construction in \cite{Haugseng:2014vw} to produce an $(\infty,k)$-category $\on{Span}_k(\mc{X},X_{\bullet,\dots,\bullet})\in\CSS_k(\hat\Spc)$ of iterated $k$-fold spans with local systems valued in $X_{\bullet,\dots,\bullet}\in\CSS_k(\mc{X})$ (here $\hat\Spc$ is the $\infty$-category of (not necessary small) spaces). 

Our second main result is to show that the assignment of $\on{Span}_k(\mc{X},X_{\bullet,\dots,\bullet})$ to $X_{\bullet,\dots,\bullet}\in\CSS_k(\mc{X})$ depends continuously on $\mc{X}$ and $X_{\bullet,\dots,\bullet}$ (i.e. it preserves small limits). This result is somewhat more subtle than it first appears: for example, the functor
$$\CSS_k(\mc{X})\xrightarrow{X_{\bullet,\dots,\bullet}\mapsto \on{Span}_k(\mc{X},X_{\bullet,\dots,\bullet})}\CSS_k(\hat\Spc)$$
is not continuous - it preserves neither products nor the terminal object. To correctly understand the continuity of $\on{Span}_k$, we need to work in a larger context: we assemble all the $\infty$-categories $\CSS_k(\mc{X})$ into one large $\infty$-category $\int\CSS_k$, whose (roughly speaking)\footnote{More precisely, $\int\CSS_k$ is the lax colimit of the functor $\mc{X}\mapsto \CSS_k(\mc{X})$, (cf. \cite{Gepner:2015ww}); equivalently, $\int\CSS_k$ is Lurie's unstraightening of that functor. We provide a direct construction of $\int\CSS_k$, however.}
\begin{itemize}
	\item objects are pairs $(\mc{X},X_{\bullet,\dots,\bullet})$, where $\mc{X}$ is an arbitrary $\infty$-topos and $X_{\bullet,\dots,\bullet}\in \CSS_k(\mc{X})$ is a higher category internal to $\mc{X}$, and
	\item morphisms $(\mc{X},X_{\bullet,\dots,\bullet})\to (\mc{Y},Y_{\bullet,\dots,\bullet})$ consist of a geometric morphism of $\infty$-topoi, $(f^*\dashv f_*):\mc{X}\leftrightarrows\mc{Y}$, together with a morphism $X_{\bullet,\dots,\bullet}\to (f_*)_!Y_{\bullet,\dots,\bullet}$ in $\CSS_k(\mc{X})$.
\end{itemize}
In Theorem~\ref{thm:SpanCont}, we prove that the functor
$$\int\CSS_k\xrightarrow{(\mc{X},X_{\bullet,\dots,\bullet})\mapsto \on{Span}_k(\mc{X},X_{\bullet,\dots,\bullet})}\CSS_k(\hat\Spc)$$
is continuous (preserves small limits).

Consequently,
suppose that for every $U\in \mc{X}$ we assign (in a natural way) a complete Segal object $\sigma(U)\in \CSS_k(\mc{X}_{/U})$, in a manner which depends locally on $U\in\mc{X}$: that is, for any colimit diagram $U_i\to U$ in $\mc{X}$ indexed by a small simplicial set $i\in I$, $$\sigma(U)=\lim_{i\in I}\sigma(U_i)$$
(where the latter limit is taken in $\int\CSS_k$), then 
\begin{subequations}\label{eq:IntroSpanStack}
\begin{equation}\mc{X}^{op}\xrightarrow{U\mapsto \on{Span}_k(\mc{X}_{/U},\sigma(U))}\CSS_k(\hat\Spc)\end{equation}
defines an $(\infty,k)$-stack over $\mc{X}$ (cf. Theorem~\ref{thm:StackofSpans}).

As a first example, $\on{Span}_k(\mc{X},X_{\bullet,\dots,\bullet})$ itself forms a stack \begin{equation}\mc{X}^{op}\xrightarrow{U\mapsto \on{Span}_k(\mc{X}_{/U},U\times X_{\bullet,\dots,\bullet})}\CSS_k(\hat\Spc)\end{equation}
\end{subequations}
over $\mc{X}$. As a second example, taking $\mc{X}=\on{dSt}_{\mbb{K}}$ to be derived stacks over a field $\mbb{K}$ of characteristic zero, and $X_{\bullet,\dots,\bullet}$ to be trivial, the fact that \eqref{eq:IntroSpanStack} forms a stack implies that the \emph{derived} composition of spans depends continuously (algebraically, in fact) on the spans involved.

Our motivation for these results comes from mathematical physics: the success of the \emph{Lagrangian Creed}:
\begin{center} \it ``everything is a Lagrangian correspondence''\footnote{Paraphrased from \cite{Weinstein:1979vn} ``everything is a Lagrangian submanifold''.},\end{center}
places Lagrangian correspondences between symplectic manifolds at the centre of classical mechanics. Lagrangian correspondences have two major flaws however: \textbf{firstly} they fail to compose in general, i.e. given two Lagrangian correspondences $$U\overset{L}{\nleftarrow} V\text{ and }V\overset{L'}{\nleftarrow} W,$$
their set theoretic composite \begin{equation}\label{eq:LagComp}U\overset{L\circ L'}{\nleftarrow} W,\end{equation} often fails to be smooth, and - \textbf{secondly} - when the composite \eqref{eq:LagComp}  exists as a Lagrangian correspondence, it may not depend continuously on $L$ and $L'$.

The first of these issues was essentially resolved by Pantev, To\"en, Vaqui\'e, and Vessozi \cite{Pantev:2011uz}, and Calaque \cite{Calaque:2013un} using derived geometry. Building upon this, Haugseng \cite{Haugseng:2014vw} then gave an embedding of Weinstein's symplectic `category' \cite{Weinstein82} whose morphisms are the Lagrangian correspondences, as a subcategory of $\on{Span}_1(\on{dSt}_{\mbb{K}}, \mc{A}^2_{\on{cl}})$, spans of derived stacks with local systems valued in closed 2-forms. The fact that \eqref{eq:IntroSpanStack} is a stack is a first step towards a deeper understanding of what it means to restore the continuity of composition using derived geometry.

Moreover, the second issue - the failure of composition to be continuous - is closely related to the failure to quantize classical mechanics functorially: After quantizing pairs where the composite \eqref{eq:LagComp} fails to depend continuously on $L$ and $L'$, one is typically trying to multiply Dirac $\delta$-functions in the corresponding quantization. In work in progress with Gwilliam, Haugseng, Johnson-Freyd, Scheimbauer, and Weinstein, we show that \emph{at least to first order} (i.e. after linearizing),
\begin{itemize}
	\item the \emph{derived} composition of Lagrangian correspondences \eqref{eq:LagComp} depends continuously on $L$ and $L'$ (cf. \cite{LiBland:2014tf}), and
	\item there is a functorial quantization.
\end{itemize}

In order to show that \eqref{eq:CSSfunctor} and \eqref{eq:IntroSpanStack} define stacks, we first need to examine limits of $\infty$-categories. Suppose that $\mc{C}_k,\quad k\in K$ is some diagram of $\infty$-categories indexed by a simplicial set $K$. In \cite{Riehl:2014ut}, Riehl and Verity show that if each $\mc{C}_k$ has all (co)limits of shape $I$ (where $I$ is some small simplicial set), and for each arrow $k\to k'$ in $K$, the corresponding functor $\mc{C}_k\to \mc{C}_{k'}$ preserves all (co)limits of shape $I$, then the limit $\infty$-category $\lim_{k\in K}\mc{C}_k$ also has all (co)limits of shape $I$. After providing an alternate proof of this result (cf. Theorem~\ref{thm:LimInLim}), we extend their result to show that if each $\mc{C}_k$ possesses all Kan extensions along a functor $\mc{I}\to\mc{I}'$, and each functor $\mc{C}_k\to \mc{C}_{k'}$ preserves those Kan extensions, then the limit $\infty$-category $\lim_{k\in K}\mc{C}_k$ also has all Kan extensions along $\mc{I}\to\mc{I}'$ (cf. Corollary~\ref{cor:KanClosed}). More generally, suppose that 
	$$F_k:\mc{C}_k \rightleftarrows\mc{D}_k:G_k, \quad k\in K$$ is diagram of adjunctions 
	coherently indexed by a small simplical set $K$, then we prove there is an adjunction $$\lim_{k\in K}F_k:\lim_{k\in K}\mc{C}_k \rightleftarrows\lim_{k\in K}\mc{D}_k:\lim_{k\in K}G_k$$ between the corresponding limit $\infty$-categories (cf. Theorem~\ref{thm:limadj}).
	
	\subsection{Acknowledgements}
	We would like to thank Owen Gwilliam, Theo Johnson-Freyd, Claudia Scheimbauer, and Alan Weinstein for many important discussions surrounding the content of this paper. We would also like to thank Thomas Nikolaus for a very helpful introduction to descent theory, and Omar Antol{\'\i}n Camarena for a number of helpful conversations about higher category theory. The author was supported by an NSF Postdoctoral Fellowship DMS-1204779.
	
	\subsection{Notation}
	We generally use the notation and terminology developed by Lurie (cf. \cite{Lurie:2009un}). In particular, by an \emph{$\infty$-category}, we mean a quasicategory, i.e. a simplicial set satisfying certain horn filling conditions. In addition, we use the following notation, some of which differs from Lurie's:
	\begin{itemize}
		\item $\BDelta$ denotes the simplicial indexing category whose objects are non empty fnite totally ordered sets $[n] := \{0, 1, . . . , n\}$ and morphisms are order-preserving functions between them. $\Delta^n:\BDelta^{op}\to \on{Sets}$ is the simplicial set represented by $[n]$.
		\item We denote generic $\infty$-categories by upper-case caligraphic letters, $\mc{A},\mc{B},\mc{C},\mc{D},\dots$. We typically denote elements $c\in \mc{C}$ of a generic $\infty$-category by lowercase versions of the same letter.
		\item We let $\on{Fun}(\mc{C},\mc{D})$ denote the $\infty$-category of functors between $\infty$-categories, and $\on{Map}_{\mc{C}}(c,c')$ denote the mapping space between two objects $c,c'\in \mc{C}$.
		\item If $\mc{C}$ is an $\infty$-category, we write $\iota\mc{C}$ for the \emph{interior} or \emph{classifying space of objects} of $\mc{C}$, i.e. the maximal Kan complex contained in $\mc{C}$.
		\item If $f : \mc{C} \to \mc{D}$ is left adjoint to a functor $g : \mc{D} \to\mc{C}$, we will refer to the adjunction as $f \dashv g$.
		\item $\Cat_\infty$ denotes the $\infty$-category of small $\infty$-categories, and the $\infty$-category of spaces, $\Spc\subset \Cat_\infty$, is the full subcategory spanned by the Kan complexes.
		\item If $\mc{C}$ is an $\infty$-category, we let $$\yon:\mc{C}\to \Pre(\mc{C}):=\on{Fun}(\mc{C},\Spc)$$
		denote the Yoneda embedding\footnote{following Theodore Johnson-Freyd's suggestion, we denote the Yoneda embedding by the first Hiragana character - pronounced `yo' - of his name, \yoneda.}.
		\item Suppose that $p_0:X_0\to K$ and $p_1:X_1\to K$ are two morphisms of simplicial sets
		and that $p_1$ is a (co)Cartesian fibration.
			 We let $\on{Fun}_{K}(X_0,X_1)$ denote the simplicial subset of all simplicial maps between $X_0$ and $X_1$ spanned by those maps which intertwine $p_0$ and $p_1$. Note that $\on{Fun}_{K}(X_0,X_1)$ is automatically an $\infty$-category (cf. \cite[Remark~3.1.3.1]{Lurie:2009un}).
			 
			  When $p_0$ and $p_1$ are both (co)Cartesian fibrations,
		then  we let $$\on{Fun}^{\on{(co)Cart}}_K(X_0,X_1)\subseteq \on{Fun}_{K}(X_0,X_1)$$ denote the subcategory spanned by those maps which preserve the (co)Cartesian edges.\footnote{Note that $\on{Fun}_{K}(X_0,X_1)$ is denoted by $\on{Map}_{K}^\flat(X_0^\flat,X_1^\natural)$ in \cite{Lurie:2009un}, while $\on{Fun}^{\on{(co)Cart}}_K(X_0,X_1)$ is denoted by $\on{Map}_{K}^\flat(X^\natural_0,X^\natural_1)$. We choose this alternate notation (in line with \cite{Gepner:2015ww}) to emphasize that the resulting simplicial set is an $\infty$-category.}
	\end{itemize}
 
\section{Properties Inherited by Limit $\infty$-Categories}
Let $\Cat_\infty$ denote the $\infty$-category of small $\infty$-categories, let $K$ be a small simplicial set, and consider a diagram $p':K\to \Cat_\infty$. We will be interested in the limit $\infty$-category, $\lim p'\in \Cat_\infty$. 

To compute such limits, consider the functor
\begin{subequations}
\begin{equation}\label{eq:DiagFunct}\Delta:\Cat_\infty\to \on{Fun}(K,\Cat_\infty),\end{equation} which sends an $\infty$-category $\mc{C}\in \Cat_\infty$ to the constant diagram: 
$$\Delta_{\mc{C}}:k\to \mc{C},\quad\text{ for any }k\in K.$$
The right adjoint to \eqref{eq:DiagFunct} is the functor which sends a diagram $p'\in \on{Fun}(K,\Cat_\infty)$ to the corresponding limit $\infty$-category $\lim p'\in \Cat_\infty$.

Now, fix a second diagram $p_0'\in \on{Fun}(K,\Cat_\infty)$ and consider the functor
\begin{equation}\label{eq:DiagTensorFunctor}\Delta\times p_0':\Cat_\infty \to \on{Fun}(K,\Cat_\infty),\end{equation}\end{subequations}
which sends any $\infty$-category $\mc{C}\in \Cat_\infty$ to the functor
$$\Delta_\mc{C}\times p_0':k\to \mc{C}\times p_0'(k),\quad\text{ for any }k\in K.$$
The right adjoint to $\Delta\times p_0'$ sends any diagram $p'\in  \on{Fun}(K,\Cat_\infty)$ to the $\infty$-category $\on{Nat}_{K}(p_0',p')\in \Cat_\infty$ of natural transformations between $p_0'$ and $p'$. Since in the special case that $p_0'=\Delta_\ast$ is the constant diagram at the terminal $\infty$-category\footnote{The terminal $\infty$-category, $\ast$, has exactly one object, one 1-morphism (the identity) and one $n$-morphism for every $n$.} both functors \eqref{eq:DiagTensorFunctor} and \eqref{eq:DiagFunct} coincide, in particular, for any diagram $p'\in \on{Fun}(K,\Cat_\infty)$, we have an equivalence
$$\lim p'\cong \on{Nat}_{K}(\Delta_\ast,p')$$ between the limit of $p'$ and the $\infty$-category of natural transformations from the trivial diagram to $p'$.

In practice, often the best description of diagrams in $\Cat_\infty$ is in terms of (co)Cartesian fibrations, as developed by Lurie \cite[\S~2.4]{Lurie:2009un}. Briefly, given an inner fibration between $\infty$-categories $\mc{C}\to \mc{D}$ a edge $f:c\to c'$ in $\mc{C}$ is called \emph{coCartesian} if 
$$\mc{C}_{f/}\to \mc{C}_{c/}\times_{\mc{D}_{p(c)/}}\mc{D}_{p(f)/}$$ is an equivalence.\footnote{Recall that for any diagram $q:I\to \mc{C}$, the \emph{undercategory}  
 $\mc{C}^{q/}$ satisfies the universal property that the space of maps $Y\to \mc{C}^{q/}$ classifies maps of the form $I\diamond Y\to \mc{C}$ (which restrict to $q$ along $I$), where $$I\diamond Y= I\coprod_{I\times Y\times\{0\}} (I\times Y\times \Delta^1)\coprod_{I\times Y\times \{1\}} Y.$$}
 The fibration $\mc{C}\to \mc{D}$ is called \emph{coCartesian} if there is a coCartesian edge over any edge in $\mc{D}$ starting at any vertex of $\mc{C}$. The fibration $\mc{C}\to\mc{D}$ is called Cartesian if $\mc{C}^{op}\to \mc{D}^{op}$ is coCartesian.

Functors $p':K\to \Cat_\infty$ correspond to coCartesian fibrations $p:X\to K$ via the straightening/unstraightening construction \cite{Lurie:2009un}; for every $k\in K$, the fibre $X_k:=p^{-1}(k)$ is equivalent to $p'(k)$, and for every edge $k\to k'$ in $K$, the corresponding functor $X_k\to X_{k'}$ is equivalent to one sending any $x\in X_k$ to the target of a coCartesian edge over $k\to k'$ starting at $x$.

Given a second coCartesian fibration $p_0:X_0\to K$ corresponding to a diagram $p_0':K\to \Cat_\infty$, Gepner, Haugseng, and Nikolaus identify the $\infty$-category of natural transformations
$$\on{Nat}_{K}(p_0',p')\cong \on{Fun}_K^{\on{coCart}}(X_0,X)$$ with the $\infty$-category of coCartesian maps $X_0\to X$, i.e. those maps $X_0\to X$ over $K$ which preserve the coCartesian edges (cf. \cite[Proposition~6.9]{Gepner:2015ww}).
In particular, Lurie shows that an elegant model of $\lim p'$ is the $\infty$-category
$$\lim p'\cong \on{Nat}_{K}(\Delta_\ast,p')\cong \on{Fun}_K^{\on{coCart}}(K,X)$$
 of coCartesian sections of $p$ \cite[Corollary~3.3.3.2]{Lurie:2009un} (note that every edge in the trivial fibration $K\to  K$ is coCartesian).
 
 \begin{Example}
 	Let $\Cat_\infty^{\on{Str}}$ denote the ordinary category whose objects are (small) $\infty$-categories, and suppose that $C$ is a (small) category. Given a strict functor $p':C\to \Cat_\infty^{\on{Str}}$, the corresponding coCartesian fibration can be computed via the relative nerve construction 
 	(cf. \cite[\S~3.2.5]{Lurie:2009un}).
 	Forming the resulting $\infty$-category of coCartesian sections, one sees that  a model for $\lim p'$ is the simplicial set whose $k$ simplices consist of the following data:
 	\begin{itemize}
 	\item for every functor $(x,y):[n]\to C\times [k]$, a choice of $n$-simplex, $\tau_{(x,y)}:\Delta^n\to p'\big(x(n)\big)$, such that
 	\end{itemize}
 	\begin{enumerate}
 	\item for every $f:[m]\to [n]$, the 
 	 following diagram commutes:
 $$\begin{tikzpicture}
 	\mmat{m}{\Delta^m & &p'\big( x(f(m))\big)\\ \Delta^n&&p'\big( x(n)\big)\\};
 	\draw[->] (m-1-1) -- node {$f$} (m-2-1);
 	\draw[->] (m-1-1) -- node {$\tau_{f^*(x,y)}$} (m-1-3);
 	\draw[->] (m-2-1) -- node {$\tau_{(x,y)}$} (m-2-3);
 	\draw[->] (m-1-3) -- node {$p'\big[x\big(f(m)\to n\big)\big]$} (m-2-3);
 \end{tikzpicture}$$
 and
 \item whenever $y(i)=y(j)$, then $\tau_{(x,y)}(\Delta^{\{i,j\}})\in p'\big(x(n)\big)$ is an equivalence.
 	\end{enumerate}
 \end{Example}
 \subsection{(co)Limits in Limit $\infty$-categories}
 Let $I$ be a second (small) simplicial set, and suppose that
 \begin{enumerate}
 	\item for each vertex $k\in K$, the $\infty$-category $p'(k)$ admits  (co)limits for all diagrams indexed by $I$.
 	\item for each edge $(k\to k')\in K$, the functor $p'(k)\to p(k')$ preserves (co)limits for all diagrams indexed by $I$.
 \end{enumerate} 
 then Riehl and Verity \cite{Riehl:2014ut} have shown that the limit $\infty$-category $\lim p' \in \Cat_\infty$ admits (co)limits for all diagrams indexed by $I$, and that those (co)limits are preserved by the functors in the limit cone. We now provide an alternate proof of this result, based on   Lurie's (co)Cartesian fibrations.
 
 To disambiguate our presentation, we will prove our results only for \emph{colimits} (rather than \emph{limits}) in the limit $\infty$-category $\lim p'$; the duality between colimits in $\lim p'$ and limits in $(\lim p')^{op}$ imply that the corresponding results hold equally for limits as well. 
 
 We begin with the special case where $I$ is the empty set, in which case we have the following variant of \cite[Proposition~2.4.4.9]{Lurie:2009un}:
\begin{Proposition}\label{lem:LimOfInit}
 Given a functor $p':K\to \Cat_\infty$, if
\begin{enumerate}
\item 	for each $k\in K$, the $\infty$-category $p'(k)$ admits an initial object $t\in X_k$, and
\item for each edge $k\to k'$ in $K$ the functor $p'(k)\to p'(k')$ preserves initial objects,
\end{enumerate}
  then 
\begin{description}
	\item[A] the limit $\infty$-category $\lim p'$ admits an initial object $t_\infty$, and
	\item[B] an object $t\in \lim p'$ is initial if and only if  for each $k\in K$, the object $\pi_k(t)\in X_k$ is initial where $\pi_k:\lim p'\to p'(k)$ is the functor appearing in the limit cone.
\end{description}
\end{Proposition}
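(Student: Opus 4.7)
The plan is to work with the coCartesian fibration model for $p'$: let $p : X \to K$ be the coCartesian fibration classified by $p'$, so that $\lim p' \cong \on{Fun}^{\on{coCart}}_K(K, X)$ via the identification reviewed earlier in this section. I will construct $t_\infty$ as a coherent choice of fibrewise initial objects, and then verify initiality by a pointwise computation of mapping spaces.

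For the construction, consider the full subcategory $X^{\on{init}} \subseteq X$ spanned by the objects that are initial in their fibres. Hypothesis~(2) says precisely that coCartesian edges starting in $X^{\on{init}}$ end there, so the projection $X^{\on{init}} \to K$ inherits a coCartesian fibration structure from $p$; its fibre over $k$ is the full subcategory of initial objects of $X_k$, which by hypothesis~(1) is a contractible Kan complex. A coCartesian fibration whose fibres are Kan complexes is a left fibration, and a left fibration with contractible fibres is a trivial Kan fibration. Hence $X^{\on{init}} \to K$ admits a section $t_\infty : K \to X^{\on{init}} \subseteq X$; any two such sections are equivalent, and $t_\infty$ is automatically a coCartesian section of $p$, since coCartesian edges in the full coCartesian subfibration $X^{\on{init}}$ remain coCartesian in $X$. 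This produces an object $t_\infty \in \lim p'$ with $\pi_k(t_\infty)$ initial in $X_k$ for each $k$.

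To prove that $t_\infty$ is initial in $\lim p'$, for any $s \in \lim p'$ I would compute
$$\on{Map}_{\lim p'}(t_\infty, s) \cong \lim_{k \in K} \on{Map}_{X_k}\bigl(\pi_k(t_\infty), \pi_k(s)\bigr).$$
This decomposition follows from the universal property of limits in $\Cat_\infty$; concretely, one realizes the mapping space as the space of coCartesian sections of $K \times \Delta^1 \to K$ extending $(t_\infty, s)$, which fibres over $K$ with the displayed pointwise mapping spaces as fibres. Each such fibre is contractible by initiality of $\pi_k(t_\infty)$, and a limit of contractible spaces is contractible; this proves~(A). Part~(B) follows at once: if each $\pi_k(t)$ is initial then $t$ factors through $X^{\on{init}}$ and is therefore equivalent to $t_\infty$ by uniqueness of sections of the trivial fibration, while conversely any initial object of $\lim p'$ is equivalent to $t_\infty$ and hence has initial projections. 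The main delicate point will be justifying the displayed mapping-space decomposition rigorously; everything else is structural and uses only standard facts about (left and) coCartesian fibrations.
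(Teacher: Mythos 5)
Your construction of $t_\infty$ is essentially the paper's, run in the dual (coCartesian) model: the paper likewise forms the simplicial subset of fibrewise initial objects and extracts a section via \cite[Proposition~2.4.4.9]{Lurie:2009un}, which is exactly your ``left fibration with contractible fibres is a trivial Kan fibration'' step. The one place you are too quick is the claim that coCartesian edges of the full subfibration $X^{\on{init}}$ ``remain coCartesian in $X$.'' That is not a general fact about full subcategories closed under coCartesian pushforward: an edge can satisfy the coCartesian lifting property against objects of the subcategory without satisfying it against all objects of $X$. What is true---and what the paper proves with a short two-simplex argument---is the stronger statement that \emph{every} edge of $X^{\on{init}}$ is coCartesian in $X$: given $e\colon x\to y$ in $X^{\on{init}}$ over $k\to k'$, factor it through a coCartesian lift $x\to y'$ in $X$; hypothesis~(2) places $y'$ in $X^{\on{init}}$, the comparison edge $y'\to y$ lies in the fibre $X_{k'}$ between two initial objects and is therefore an equivalence, and a coCartesian edge followed by an equivalence is coCartesian. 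You need this point both to see that $t_\infty$ actually defines an object of $\on{Fun}^{\on{coCart}}_K(K,X)\cong\lim p'$ and for the ``if'' direction of~(B), so it should be spelled out rather than cited as a general principle.

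With that repaired, your verification of initiality takes a genuinely different route from the paper's. You propose the decomposition $\on{Map}_{\lim p'}(t_\infty,s)\cong\lim_{k\in K}\on{Map}_{X_k}\bigl(\pi_k(t_\infty),\pi_k(s)\bigr)$ and conclude by contractibility of each term; the paper (working with the Cartesian model over $K^{op}$) instead solves the lifting problem $\partial\Delta^n\to\on{Fun}^{\on{Cart}}_{K^{op}}(K^{op},X)$ directly, producing an extension over $\Delta^n$ inside the larger $\infty$-category $\on{Fun}_{K^{op}}(K^{op},X)$ via the fibrewise lifting criterion for initial objects, and then observing that the extension automatically lands in the full subcategory of Cartesian sections because all of its vertices do. Both arguments are valid. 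The paper's avoids having to justify that mapping spaces in a limit of $\infty$-categories are computed as limits of mapping spaces---which, as you acknowledge, is the delicate point of your version---at the cost of invoking the relative form of the initial-object lifting criterion; your version has the mild advantage of making part~(B) transparent once the mapping-space formula is in place.
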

\begin{proof}
We will find it easier to model our functor $p':K\to \Cat_\infty$ in terms of a \emph{Cartesian fibration} $p:X\to K^{op}$ (rather than a \emph{coCartesian fibration}).

Let $p:X\to K^{op}$ be a Cartesian fibration of simplicial sets classified by the functor $p':K\to \Cat_\infty$. By assumption 
\begin{enumerate}
\item[(1')] 	for each $k\in K$, the $\infty$-category $X_k\cong p'(k)$ admits an initial object $t\in X_k$, and
\item[(2')] for each $p$-Cartesian edge $f:t'\to t$ over $p(f):k'\to k$ the object $t'\in X_{k'}$ is initial whenever $t\in X_k$ is,
\end{enumerate}

	Let $X'\subseteq X$ be the simplical subset spanned by those vertices $t\in X$ which are initial objects of $X_{p(t)}\cong p'\circ p(t)$. Then (as we shall show), every edge $f:t'\to t$ in $X'$ is $p$-Cartesian (when seen as an edge of $X$). To see this suppose that $f:t'\to t$ is such an edge. Let $f':t''\to t$ be a $p$-Cartesian edge in $X$ over $p(f)$; then (cf. \cite[Remark~2.4.1.4.]{Lurie:2009un}) there exists a 2-simplex $\sigma:\Delta^2\to X$ such that 
	$$\sigma(\Delta^{\{1,2\}})=f',\quad \sigma(\Delta^{\{0,2\}})=f,\quad\text{and}\quad p\big(\sigma(\Delta^{\{0,1\}})\big)=p(s_0(t')),$$
	where $s_0:K_0\to K_1$ is the degeneracy map.
	
	By assumption $t'',t'\in X_{p(s)}$ are both initial, and hence $\sigma(\Delta^{\{0,1\}})\in X_{p(t')}$ is an equivalence. In particular $\sigma(\Delta^{\{0,1\}})\in X$ is a $p$-Cartesian morphism. It follows from \cite[Proposition~2.4.1.7.]{Lurie:2009un} that $f=\sigma(\Delta^{\{0,2\}})$ is $p$-Cartesian.
	
	Now, by \cite[Proposition~2.4.4.9.]{Lurie:2009un}, there exists a section $t_\infty:K^{op}\to X'$; and by the previous discussion, $t_\infty$ is a Cartesian section.
	
	Now $\lim p'\cong \on{Fun}_{K^{op}}^{\on{Cart}}({K^{op}},X)$ is the $\infty$-category of Cartesian sections of $p$ (cf. \cite[Corollary~3.3.3.2]{Lurie:2009un}). Thus, we can identify $t_\infty$ with an element of $\lim p'$. We now claim that $t_\infty\in \lim p'$ is an initial object: Notice that $\lim p'$ is the full subcategory of $\on{Fun}_{K^{op}}(K^{op},X)$ spanned by the Cartesian sections.  Suppose we have a diagram
	$$\begin{tikzpicture}
		\mmat{m}{\partial \Delta^n & \on{Fun}_{K^{op}}^{\on{Cart}}({K^{op}},X) & \on{Fun}_{K^{op}}({K^{op}},X)\\ \Delta^n & &\\};
		\draw[->] (m-1-1) -- node {$f$} (m-1-2);
		\draw[right hook->] (m-1-2) -- (m-1-3);
		\draw[right hook->] (m-1-1) -- (m-2-1);
		\draw[->,dashed] (m-2-1) -- node[swap] {$\tilde f$} (m-1-3);
		\draw[->,dashed] (m-2-1) -- node {$\tilde f'$} (m-1-2);
	\end{tikzpicture}$$
	such that $f\rvert_{\{0\}}=t_\infty$. Then by \cite[Proposition~2.4.4.9.]{Lurie:2009un}, the arrow $\tilde f$ exists (making the diagram commute), but since $\on{Fun}_{K^{op}}^{\on{Cart}}({K^{op}},X)$ is the full subcategory of $\on{Fun}_{K^{op}}({K^{op}},X)$ spanned by the Cartesian sections, and all the vertices of $\tilde f$ lie in the image of $f$, and hence in $\on{Fun}_{K^{op}}^{\on{Cart}}({K^{op}},X)$, it follows that $\tilde f$ factors through a map $\tilde f':\Delta^n\to \on{Fun}_{K^{op}}^{\on{Cart}}({K^{op}},X)$.
	
	Thus $t_\infty\in \lim p'$ is an initial object.
	
	By construction, for every $k\in K$, $\pi_k(t_\infty)\in p'(k)$ is the image of the initial object $t_\infty(k)\in X_k$ under the equivalence of $\infty$-categories $X_k\xrightarrow{\cong} p'(k)$. Thus (2) follows from the uniqueness of initial objects. 
\end{proof}

Now, we can interpret arbitrary colimits in terms of initial objects using the concept of an undercategory, as follows: 
Suppose that $\mc{C}\in \Cat_\infty$ is an $\infty$-category and $q:I\to \mc{C}$ is a diagram of shape $I$ (where $I$ is a small simplicial set). Then a colimit diagram for $q$ is equivalent to an initial object of the undercategory, 
 $\mc{C}^{q/}$.

Suppose that $K$ is a (small) simplicial set and $\bar p':K^{\lhd}\to \Cat_\infty$ is a diagram with cone point $\mc{C}\in \Cat_\infty$. For any vertex $k\in K^\lhd$, let $\pi_k:\mc{C}\to p'(k)$ denote the corresponding functor in the cone.  Given any diagram $q:I\to \mc{C}$, since the formation of undercategories is natural, there exists a diagram $(\bar p')^{q/}:K^\lhd\to \Cat_\infty$ indexing the undercategories: $$(\bar p')^{q/}(k)\cong \big(\bar p'(k)\big)^{\pi_k\circ q/},$$ for all $k\in K^{\lhd}$, along with a natural transformation $(\bar p')^{q/}\to \bar p'$ which restricts at every $k\in K^\lhd$ to the canonical functors $\big(\bar p'(k)\big)^{\pi_k\circ q/}\to \bar p'(k)$.

\begin{Lemma}\label{lem:LimComUnder}
	If $\bar p':K^{\lhd}\to \Cat_\infty$ is a limit diagram, then so is $(\bar p')^{q/}:K^\lhd\to \Cat_\infty$.
\end{Lemma}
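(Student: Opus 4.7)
The plan is to verify the limit property of $(\bar p')^{q/}$ by a Yoneda argument, leveraging the universal property of undercategories recalled in the footnote before the lemma. For any $\infty$-category $\mc{D}$, maps $\mc{D}\to \mc{C}^{q/}$ correspond to functors $I\diamond \mc{D}\to \mc{C}$ whose restriction to $I$ is $q$. Since $\mc{C}$ is a limit, such a functor is equivalent data to a coherent family of functors $I\diamond \mc{D}\to p'(k)$ restricting to $\pi_k\circ q$ on $I$. Applying the undercategory universal property once more (now fibrewise in $k$) turns this family into a coherent family of maps $\mc{D}\to \bigl(p'(k)\bigr)^{\pi_k\circ q/}$, i.e.\ a single map $\mc{D}\to \lim_{k\in K}\bigl(p'(k)\bigr)^{\pi_k\circ q/}$. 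Naturality in $\mc{D}$ combined with the Yoneda lemma then yields the desired equivalence $\mc{C}^{q/}\simeq \lim_{k\in K}\bigl(p'(k)\bigr)^{\pi_k\circ q/}$.

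To make this rigorous, I would first verify that the natural transformation $(\bar p')^{q/}\to \bar p'$ constructed in the paragraph preceding the lemma exhibits $\mc{C}^{q/}$ as the apex of a cone over $(\bar p')^{q/}|_K$, and that the resulting comparison map to the limit agrees with the one produced by the Yoneda calculation above. The two ingredients needed here are naturality of the under-slice construction in the ambient $\infty$-category, and the observation that $I\diamond(-)$ is defined as a pushout (see the footnote earlier in this section), so that $\on{Map}(I\diamond \mc{D},-)$ sends limits in its second argument to limits of mapping spaces.

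The main subtlety is bookkeeping: each equivalence in the chain must respect the evaluation to the space of diagrams $I\to \mc{C}$ (resp.\ $I\to p'(k)$), so that the condition ``restricts to $q$ along $I$'' is preserved coherently throughout. An alternative route, more in line with the (co)Cartesian fibration methods of this section, is to model $\bar p'$ by a (co)Cartesian fibration $\bar X\to K^\lhd$, construct a relative undercategory fibration $\bar X^{q/}\to K^\lhd$ whose fibre over $k$ is $X_k^{\pi_k\circ q/}$, verify that its (co)Cartesian edges are precisely those projecting to (co)Cartesian edges of $\bar X$, and then identify its (co)Cartesian sections over $K$ with $\mc{C}^{q/}$ via Lurie's model $\lim p'\simeq \on{Fun}^{\on{coCart}}_K(K,X)$. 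In either approach, the essential mathematical input is that forming undercategories commutes with limits in $\Cat_\infty$, which is itself a consequence of the pushout presentation of $I\diamond(-)$.
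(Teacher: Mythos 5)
Your primary route (the Yoneda/mapping-space argument) is a genuinely different decomposition from the paper's, while your ``alternative route'' is essentially the paper's actual proof. The paper models $\bar p'$ by a coCartesian fibration $\bar X\to K^\lhd$, builds the relative undercategory $\bar X^{q_{K^\lhd}/}\to K^\lhd$ via the relative join, invokes \cite[Proposition~4.2.2.4]{Lurie:2009un} to see it is a coCartesian fibration classified by $(\bar p')^{q/}$, and then computes directly that its coCartesian sections over $K^\lhd$ (resp.\ $K$) form the undercategory $\big(\on{Fun}^{\on{coCart}}_{K^\lhd}(K^\lhd,\bar X)\big)^{\tilde q/}$ (resp.\ the analogue over $K$), so that the restriction map for the undercategory fibration is an equivalence whenever the one for $\bar X$ is. Your Yoneda route instead reduces everything to the identity $\on{Map}(\mc{D},\mc{C}^{q/})\simeq\on{fib}_q\big(\on{Map}(I\diamond\mc{D},\mc{C})\to\on{Map}(I,\mc{C})\big)$ together with the facts that corepresentables and fibres commute with limits; this is a cleaner high-level argument and makes the ``undercategories commute with limits'' slogan transparent. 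The cost is exactly the bookkeeping you flag: to apply the universal property ``fibrewise in $k$'' you need the fibre-wise identifications to assemble into an equivalence of $K$-indexed diagrams of spaces, and supplying that coherence is the real technical content of the lemma. The paper's relative-join model $\bar X^{q_{K^\lhd}/}$ is precisely the device that produces the coherent diagram $(\bar p')^{q/}$ in the first place (the informal paragraph preceding the lemma is only made rigorous inside the proof), so in practice your Yoneda argument would still lean on that construction, or on a rectification of $p'$ to a strict diagram, to get off the ground. One small caveat on your fibrational sketch: the paper does not characterize the coCartesian edges of $\bar X^{q_{K^\lhd}/}$ as those projecting to coCartesian edges of $\bar X$; it gets the coCartesian structure from \cite[Proposition~4.2.2.4]{Lurie:2009un} and then unwinds the simplices of the section category directly, which is the step that actually closes the argument.
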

\begin{proof}
	Let $\bar p:\bar X \to K^\lhd$ be a coCartesian fibration classified by $\bar p'$; and let $\ast\in (K^{op})^\lhd$ denote the cone point. Since $\{\ast\}^\sharp\subseteq \big(K^\lhd\big)^\sharp$ is marked anodyne, we have a natural equivalence of the $\infty$-category of coCartesian sections:
	$$\on{Fun}_{K^\lhd}^{\on{coCart}}(K^\lhd,\bar X)\xrightarrow{\cong}\on{Fun}_{K^\lhd}^{\on{coCart}}(\{\ast\},\bar X)\cong \bar X\times_{K^\lhd}\{\ast\}\cong \mc{C}.$$
	Therefore, we may lift $q:I\to \mc{C}$ to a diagram $$\tilde q:I\to \on{Fun}_{K^\lhd}^{\on{coCart}}(K^\lhd,\bar X)$$ in a homotopically unique way. Choose a factorization $I \to I' \to  \on{Fun}_{K^\lhd}^{\on{coCart}}(K^\lhd,\bar X)$ of $\tilde q$, where $I \to I'$ is inner anodyne (and therefore a categorical equivalence) and $I' \to  \on{Fun}_{K^\lhd}^{\on{coCart}}(K^\lhd,\bar X)$ is an inner fibration (so that $I'$ is an $\infty$-category). The map $I \to I'$ is a categorical equivalence, and therefore cofinal. We are free to replace $I$ by $I'$, and may thereby assume that $I$ is an $\infty$-category.

Given two morphisms of simplicial sets $Y\to K^\lhd$, and $Z\to K^\lhd$, recall that $Z\diamond_{K^\lhd} Y$ denotes the relative (alternate) join of the simplicial sets $Z$ and $Y$, $$Z\diamond_{K^\lhd} Y:= Z\coprod_{Z\times_{K^\lhd} Y\times \{0\}}(Z\times_{K^\lhd} Y\times \Delta^1)\coprod_{Z\times_{K^\lhd} Y\times \{1\}}Y
$$ (cf. \cite[\S~4.2.2]{Lurie:2009un} and \cite{Joyal:2008wr}).

	Let $q_{K^\lhd}:I\times K^\lhd\to \bar X$ denote the composite
$$I\times K^\lhd\xrightarrow{q\times \on{Id}_{K^\lhd}}\on{Fun}_{K^\lhd}^{\on{coCart}}(K^\lhd,\bar X)\times K^\lhd\xrightarrow{\on{ev}} \bar X.$$
As in \cite[\S~4.2.2]{Lurie:2009un} we define $\bar X^{q_{K^\lhd}/}\to K^\lhd$ to be the simplicial set satisfying the universal property that
for any morphism of simplicial sets $Y\to K^\lhd$, 
 commutative diagrams of the form 
$$\begin{tikzpicture}
	\mmat{m}{I\times K^\lhd &\bar X\\(I\times K^\lhd)\diamond_{K^\lhd} Y& K^\lhd\\};
	\draw[->] (m-1-1) -- node {$q_{K^\lhd}$} (m-1-2);
	\draw[right hook->] (m-1-1) --  (m-2-1);
	\draw[->] (m-1-2) -- node {$\bar p$} (m-2-2);
	\draw[->] (m-2-1) -- (m-2-2);
	\draw[->] (m-2-1) --  (m-1-2);
\end{tikzpicture},$$
correspond to diagrams of the form 
$$\begin{tikzpicture}\node (y) at (-2,1) {$Y$};
\node (q) at (2,1) {$\bar X^{q_{K^\lhd}/}$};
\node (css) at (0,0) {$K^\lhd$};
\draw[->] (y) -- (q);
\draw[->] (y) -- (css);
\draw[->] (q) -- (css);
\end{tikzpicture}$$


	Note that $p\circ q_{K^\lhd}:I\times K^\lhd\to K^\lhd$ is just the projection, so $p\circ q_{K^\lhd}$ is a Cartesian fibration; and by \cite[Proposition~4.2.2.4.]{Lurie:2009un} $\bar X^{q_{K^\lhd}/}\to K^\lhd$ is a coCartesian fibration classified by $(\bar p')^{q/}:({K})^\lhd \to \Cat_\infty$. In particular, the fibre of $\bar X^{q_{K^\lhd}/}$ over any $k\in K^\lhd$ may be identified with the undercategory $\big(\bar p'(k)\big)^{\pi_k\circ q/}$ (cf. \cite[\S~4.2.2]{Lurie:2009un}).
	
	Let $X=\bar X\times_{K^\lhd} K$, and $q_{K}=q_{K^\lhd}\rvert_{I\times K}:I\times K\to X$. Then $\bar X^{q_{K^\lhd}/}\times_{K^\lhd} K\to K$ is canonically isomorphic to $X^{q_K/}\to K$. Consequently, by \cite[Proposition~3.3.3.1]{Lurie:2009un}, it suffices to show that whenever 
	\begin{subequations}
	\begin{equation}\label{eq:thetaequiv}\theta:\on{Fun}_{K^\lhd}^{\on{coCart}}(K^\lhd,\bar X)\to \on{Fun}_{K}^{\on{coCart}}(K,X)\end{equation} is an equivalence of $\infty$-categories, so is
	\begin{equation}\label{eq:thetaqequiv}\on{Fun}_{K^\lhd}^{\on{coCart}}(K^\lhd,\bar X^{q_{K^\lhd/}})\to \on{Fun}_{K}^{\on{coCart}}(K^\sharp,X^{q_K/}).\end{equation}\end{subequations}

	Using the identification $(I\times {K^\lhd})\diamond_{K^\lhd} (\Delta^n\times {K^\lhd})\cong (I\diamond \Delta^n)\times {K^\lhd}$, one sees that the $n$ simplices of $\on{Fun}_{K^\lhd}^{\on{coCart}}(K^\lhd,\bar X^{q_{K^\lhd/}})$ are lifting diagrams of the form 
$$\protect\tikz{
\node (m-1-1) at (0,2) {$I\times {K^\lhd}$};
\node (m-1-2) at (3,2) {$\bar X$};
\node (m-2-1) at (0,0) {$(I\diamond \Delta^n)\times {K^\lhd}$};
\node (m-2-2) at (3,0) {${K^\lhd}$};
	\draw[->] (m-1-1) -- node {$q_{K^\lhd}$} (m-1-2);
	\draw[right hook->] (m-1-1) --  (m-2-1);
	\draw[->] (m-1-2) -- node {$p$} (m-2-2);
	\draw[->] (m-2-1) -- (m-2-2);
	\draw[dashed,->] (m-2-1) -- node {$\sigma$}  (m-1-2);
}$$
such that for each vertex $v$ of $I\diamond \Delta^n$, the restriction $\sigma\rvert_{\{v\}\times {K^\lhd}}:{K^\lhd}\to \bar X$ is coCartesian. Thus, $\on{Fun}_{K^\lhd}^{\on{coCart}}(K^\lhd,\bar X^{q_{K^\lhd/}})\cong \big(\on{Fun}_{K^\lhd}^{\on{coCart}}({K^\lhd},\bar X)\big)^{\tilde q/}$.

Similarly, $\on{Fun}_K^{\on{coCart}}K,X^{q_{K/}})\cong \big(\on{Fun}_K^{\on{coCart}}({K}, X)\big)^{\theta\circ \tilde q/}$. It follows that \eqref{eq:thetaqequiv} is an equivalence whenever \eqref{eq:thetaequiv} is.
\end{proof}

Combining Proposition~\ref{lem:LimOfInit} and Lemma~\ref{lem:LimComUnder} yields the general case:
\begin{Theorem}\label{thm:LimInLim}Let $I$ and $K$ be small simplicial sets, and suppose $p':K\to \Cat_\infty$ is a functor. Let $\lim p'\in \Cat_\infty$ denote the limit $\infty$-category and for each $k\in K$, let $\pi_k:\lim p'\to p'(k)$ denote the corresponding functor in the limit cone.
Suppose that $q:I\to \lim p'$ is a diagram indexed by $I$, and that
\begin{enumerate}
	\item for each vertex $k\in K$, the composite diagram $\pi_k\circ q:I\to p'(k)$ has a (co)limit diagram, and
	\item for each edge $f:k\to k'$ of $K$, the functor $p'(f):p'(k)\to p'(k')$ takes (co)limit diagrams extending $\pi_k\circ q$ to (co)limit diagrams extending $\pi_{k'}\circ q$.
\end{enumerate} 
Then:
\begin{description}
	\item[A] there exists a map $\bar q:I^\rhd\to \lim p'$ which extends $q$ and such that each composite $\pi_{k'}\circ \bar q:I^\rhd\to p'(k)$ is a (co)limit (co)cone, and 
	\item[B] an arbitrary extension $\bar q:I^\rhd\to \lim p'$ of $q$ is a (co)limit diagram extending $q$ if and only if each composite $\pi_{k'}\circ \bar q:I^\rhd\to p'(k)$ is a (co)limit diagram extending $\pi_{k'}\circ q$.
\end{description} 

In particular, if
\begin{enumerate}
	\item[(1')] for each vertex $k\in K$ the  $\infty$-category $p'(k)$ admits (co)limits for all diagrams indexed by $I$, and
	\item[(2')] for each edge $f:k\to k'$ of $K$, the functor $p'(f):p'(k)\to p'(k')$ preserves (co)limits for all diagrams indexed by $I$.
\end{enumerate}
then the limit $\infty$-category $\lim p'$ admits all (co)limits of shape $I$, and the functors $\pi_k:\lim p'\to p'(k)$ fitting into the limit cone preserve all (co)limits of shape $I$.
%
\end{Theorem}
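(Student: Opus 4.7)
The plan is to reduce the general case to the special case of initial objects---that is, to Proposition~\ref{lem:LimOfInit}---via the standard identification of colimit cocones of $q:I\to\mc{C}$ with initial objects of the undercategory $\mc{C}^{q/}$, combined with Lemma~\ref{lem:LimComUnder} to commute the formation of undercategories past the limit over $K$.

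In detail, let $\bar p':K^\lhd\to\Cat_\infty$ be the limit cone with cone point $\lim p'$ and projections $\pi_k$. Lemma~\ref{lem:LimComUnder} tells us that the induced diagram $(\bar p')^{q/}:K^\lhd\to\Cat_\infty$, which at $k\in K^\lhd$ takes the value $(\bar p'(k))^{\pi_k\circ q/}$, is still a limit cone. In particular, the canonical comparison functor
$$(\lim p')^{q/}\xrightarrow{\ \cong\ }\lim_{k\in K}\bigl(p'(k)\bigr)^{\pi_k\circ q/}$$
is an equivalence, compatible with the projections to each fibre. Under the usual correspondence between colimit cocones and initial objects of undercategories, hypothesis~(1) becomes the statement that each $(p'(k))^{\pi_k\circ q/}$ admits an initial object, while hypothesis~(2) says that each transition functor in the diagram $(p')^{q/}$ preserves initial objects.

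We may now apply Proposition~\ref{lem:LimOfInit} to $(p')^{q/}$. Its first conclusion supplies an initial object of $(\lim p')^{q/}$, which unwinds to an extension $\bar q:I^\rhd\to\lim p'$ of $q$, proving part~A; its second conclusion says that an object of $(\lim p')^{q/}$ is initial if and only if each of its images in $(p'(k))^{\pi_k\circ q/}$ is initial, which translates back through the equivalence above to part~B of the present theorem. The ``in particular'' clause is then immediate, since hypotheses (1$'$) and (2$'$) specialize to (1) and (2) for every diagram $q:I\to\lim p'$.

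The main thing to verify is that the transition functors in the undercategory diagram $(p')^{q/}$ really are induced, up to coherent equivalence, by those of $p'$, so that hypothesis~(2) implies preservation of initial objects for $(p')^{q/}$. This is provided by the coCartesian fibration model $\bar X^{q_{K^\lhd}/}\to K^\lhd$ constructed in the proof of Lemma~\ref{lem:LimComUnder}: its fibre over $k$ is $(p'(k))^{\pi_k\circ q/}$, and the coCartesian pushforward along $f:k\to k'$ is inherited from that of $\bar X\to K^\lhd$, hence induced by $p'(f)$ using the compatibility $p'(f)\circ\pi_k\simeq\pi_{k'}$.
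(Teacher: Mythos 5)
Your proposal is correct and follows essentially the same route as the paper: both reduce to Proposition~\ref{lem:LimOfInit} by passing to the diagram of undercategories $(\bar p')^{q/}$, invoke Lemma~\ref{lem:LimComUnder} to see that this is again a limit cone, and translate hypotheses (1) and (2) into existence and preservation of initial objects via the standard identification of colimit cocones with initial objects of undercategories. Your closing remark about checking that the transition functors of $(\bar p')^{q/}$ are induced by those of $p'$ is exactly the point the paper also makes explicitly, using the coCartesian fibration model $\bar X^{q_{K^\lhd}/}\to K^\lhd$.
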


\begin{proof}

Let $\bar p':K^\lhd\to \Cat_\infty$ be a limit cone extending $p'$ which maps the cone point $\infty\in K^\lhd$ to $\lim p'$ and the cone edge $\infty\to k$ to $\pi_k$ for each $k\in K^\lhd$. Let $(\bar p')^{q/}:K^\lhd\to \Cat_\infty$ denote corresponding diagram of undercategories, as in Lemma~\ref{lem:LimComUnder}. Then $(\bar p')^{q/}:K^\lhd\to \Cat_\infty$ is a limit cone which (by assumptions (1) and (2)) satisfies the assumptions for Proposition~\ref{lem:LimOfInit}. Now for any $k\in K^\lhd$, the $\infty$-category of diagrams $I^\rhd\to \bar p'(k)$ extending $\pi_k\circ q$ is equivalent to the undercategory $\big(\bar p'(k)\big)^{\pi_k\circ q}$, and this equivalence identifies colimit diagrams with initial objects of the undercategory. Moreover, for any edge $k\to k'$ in $K^\lhd$, the functor $\big(\bar p'(k)\big)^{\pi_k\circ q/}\to \big(\bar p'(k')\big)^{\pi_{k'}\circ q/}$ preserves initial objects if and only if the functor
 $\bar p'(k)\to \bar p'(k')$ takes colimits diagrams extending $\pi_k\circ q$ to colimit diagrams extending $\pi_{k'}\circ q$.

Therefore statements \textbf{A} and \textbf{B} follow from Proposition~\ref{lem:LimOfInit}.

\end{proof}

 \subsection{Adjunctions and Kan Extensions}
 \subsubsection{Limits of adjunctions.} Our first application of Theorem~\ref{thm:LimInLim} is to prove the following result:
\begin{Theorem}[A limit of adjunctions is an adjunction]\label{thm:limadj} Suppose 
	$$F_k:\mc{C}_k \rightleftarrows\mc{D}_k:G_k, \quad k\in K$$ is diagram of adjunctions 
	coherently indexed by a small simplical set $K$, i.e. given by a diagram $(f\dashv g):K\to \on{Adj}$ into the $\infty$-category of adjunctions.
	 Let 
	$\mc{C}=\lim_{k\in K}\mc{C}_k$ and  $\mc{D}=\lim_{k\in K}\mc{D}_k,$
	and suppose that
	\begin{itemize}
	\item 	
 there is a functor $F:\mc{C}\to\mc{D}$ which fits into the cone edge of a diagram $\bar f:K^\lhd\times \Delta^1\to \Cat_\infty$ extending $f:K\times \Delta^1\to \Cat_\infty$, and
 \item  there is a functor $G:\mc{D}\to \mc{C}$ which fits into the cone edge of a diagram $\bar g:K^\lhd\times \Delta^1\to \Cat_\infty$,  extending $g:K\times \Delta^1\to \Cat_\infty$,
	\end{itemize}
	such that
	\begin{enumerate}
	\item  the restrictions $\bar f\rvert_{K^\lhd\times\{0\}}$ and  $\bar g\rvert_{K^\lhd\times\{1\}}$ are limit cones for $\mc{C}$, and
	\item  the restrictions $\bar f\rvert_{K^\lhd\times\{1\}}$ and  $\bar g\rvert_{K^\lhd\times\{0\}}$ are limit cones for $\mc{D}$.
	\end{enumerate}

	Then $F$ and $G$ form a pair of adjoint functors
	$$F:\mc{C} \rightleftarrows\mc{D}:G$$
\end{Theorem}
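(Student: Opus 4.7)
The plan is to construct a candidate unit $\eta: \on{Id}_{\mc{C}} \to GF$ from the fiberwise units $\eta_k: \on{Id}_{\mc{C}_k} \to G_k F_k$ of the adjunctions $F_k \dashv G_k$, and then to verify that $\eta$ exhibits $F$ as left adjoint to $G$ by checking the hom-space condition one fiber at a time.

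First, I would use the fact that the adjunction data $(f\dashv g): K\to\on{Adj}$ provides the unit natural transformations $\eta_k$ coherently in $k$. Precomposing each $\eta_k$ with the projection $\pi_k^{\mc{C}}:\mc{C}\to\mc{C}_k$ yields a coherent family of natural transformations $\pi_k^{\mc{C}}\to G_kF_k\pi_k^{\mc{C}}$ in $\on{Fun}(\mc{C},\mc{C}_k)$. Because $\on{Fun}(\mc{C},-)$ preserves limits, the equivalence $\on{Fun}(\mc{C},\mc{C})\simeq\lim_k\on{Fun}(\mc{C},\mc{C}_k)$ assembles this coherent family into a single transformation $\eta:\on{Id}_{\mc{C}}\to GF$. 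With $\eta$ in hand, I would check that for every $c\in\mc{C}$ and $d\in\mc{D}$, the composite $\on{Map}_{\mc{D}}(Fc,d)\xrightarrow{G}\on{Map}_{\mc{C}}(GFc,Gd)\xrightarrow{-\circ\eta_c}\on{Map}_{\mc{C}}(c,Gd)$ is an equivalence of spaces. Since mapping spaces in a limit $\infty$-category decompose as limits of the fiberwise mapping spaces (a formal consequence of representability combined with $\mc{C}\simeq\lim_k\mc{C}_k$ and $\mc{D}\simeq\lim_k\mc{D}_k$), both sides identify with $\lim_k\on{Map}_{\mc{D}_k}(F_kc_k,d_k)$ and $\lim_k\on{Map}_{\mc{C}_k}(c_k,G_kd_k)$ respectively, and the comparison map becomes the limit of the fiberwise comparisons induced by the $\eta_{c_k}$. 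Each of those fiberwise maps is an equivalence by the hypothesis $F_k\dashv G_k$, so the limit is an equivalence.

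The main technical obstacle is the coherent assembly of the $\eta_k$'s in the first step, which requires a careful interpretation of what it means to have a $K$-indexed diagram in $\on{Adj}$ and a corresponding manipulation of the limit cones $\bar f,\bar g$ to produce a compatible family of units in each $\on{Fun}(\mc{C},\mc{C}_k)$. A possibly cleaner route is via the bifibration encoding of adjunctions: each $F_k\dashv G_k$ corresponds to a map $\mc{M}_k\to\Delta^1$ that is simultaneously Cartesian and coCartesian; the diagram of such fibrations admits a limit $\mc{M}\to\Delta^1$ with $\mc{M}_0\simeq\mc{C}$ and $\mc{M}_1\simeq\mc{D}$; and Theorem~\ref{thm:LimInLim} (together with its dual characterization of (co)Cartesian edges as initial objects in appropriate slice $\infty$-categories) implies that $\mc{M}\to\Delta^1$ remains bifibrant in the limit, thereby encoding the adjunction $F\dashv G$ directly and bypassing the explicit construction of $\eta$.
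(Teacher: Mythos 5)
Your second, ``bifibration'' route is essentially the paper's proof in a different encoding. The paper defines $\on{Adj}$ as a pullback of the $\infty$-categories $\on{CPair}^L\cong\on{Fun}(\Delta^1,\Cat_\infty)\cong\on{CPair}^R$ over $\on{CPair}$, i.e.\ it encodes an adjunction as a right fibration $\mc{M}\to\mc{C}\times\mc{D}^{op}$ that is both left- and right-representable, rather than as a map $\mc{M}\to\Delta^1$ that is both Cartesian and coCartesian; but the mechanism is identical to what you sketch. Lemma~\ref{lem:CPairClosed} shows $\on{CPair}$ and $\on{CPair}^R$ are closed under small limits precisely because representability amounts to the existence of terminal objects in the fibres $\mc{M}\times_{\mc{D}^{op}}\{d\}$, and these persist in the limit by Theorem~\ref{thm:LimInLim}; the theorem then follows because the two projections $\on{Adj}\to\on{Fun}(\Delta^1,\Cat_\infty)$ (left and right adjoint) preserve limits and limits in $\on{Fun}(\Delta^1,\Cat_\infty)$ are computed pointwise, which is exactly how the statement characterizes $F$ and $G$. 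If you pursue your version you will need the same two ingredients you allude to: the identification of the relevant slice/comma $\infty$-categories of the limit with the limits of the fibrewise ones (the analogue of Lemma~\ref{lem:LimComUnder}), and Theorem~\ref{thm:LimInLim} to produce the (co)Cartesian lifts.

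Your first route, by contrast, has a genuine gap at exactly the point you flag, and it is not merely bookkeeping. To assemble $\eta:\on{Id}_{\mc{C}}\to GF$ you need the units $\eta_k$ to form a \emph{coherent} $K$-indexed family compatible with the cone data of $\bar f$ and $\bar g$, and what such a family is depends entirely on the chosen model of $\on{Adj}$. In the paper's model the unit is not part of the data of an object or morphism of $\on{Adj}$ at all --- a morphism of adjunctions is a right-representable and left-representable map of pairings --- so extracting a coherent diagram of units valued in $\lim_k\on{Fun}(\mc{C},\mc{C}_k)$ requires a nontrivial construction (essentially re-proving that $\on{Adj}$ is equivalent to a model in which units are tautologically present, coherently in $K$). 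Once $\eta$ is in hand the rest of your argument does work: the decomposition $\on{Map}_{\mc{C}}(x,y)\simeq\lim_k\on{Map}_{\mc{C}_k}(x_k,y_k)$ is correct, and by \cite[Proposition~5.2.2.8]{Lurie:2009un} an objectwise equivalence criterion against a unit suffices. But as written the proof is incomplete without resolving the coherence step, which is precisely the difficulty the pairing (or bifibration) formalism is designed to avoid.
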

 We defer the proof until later: we will first need to give a precise definition of the $\infty$-category, $\on{Adj}$, of adjunctions. To do this, we will use the framework for adjunctions of $\infty$-categories in terms of \emph{pairing of $\infty$-categories}, as developed in \cite{Lurie:ZsNVU6Ru}. For now we give an immediate corollary:
 
\begin{Corollary}\label{cor:KanClosed}
	Suppose that $\delta:I\to I'$ is a morphism of simplicial sets. Let $\Cat_\infty^\delta\subset\Cat_\infty$ be the subcategory consisting of
	\begin{itemize}
 \item those $\infty$-categories $\mc{C}$ which admit left (right) Kan extensions along $\delta$ for any functor $f:I\to \mc{C}$, and 
 \item those functors $\mc{C}\to\mc{C}'$ which preserve left (right) Kan extensions along $\delta$.
	\end{itemize}
	
	Then $\Cat_\infty^\delta\subset\Cat_\infty$ is closed under (small) limits.
\end{Corollary}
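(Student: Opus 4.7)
The plan is to translate ``admits Kan extensions along $\delta$'' into ``the restriction functor $\delta^\ast$ has an adjoint'' and then invoke Theorem~\ref{thm:limadj} directly.

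\textbf{Reduction to adjunctions.} An $\infty$-category $\mc{C}$ admits all left Kan extensions along $\delta:I\to I'$ if and only if the restriction functor $\delta^\ast_{\mc{C}}:\on{Fun}(I',\mc{C})\to\on{Fun}(I,\mc{C})$ admits a left adjoint $\delta_!^{\mc{C}}$. Similarly, a functor $h:\mc{C}\to\mc{C}'$ preserves all such left Kan extensions if and only if the Beck--Chevalley transformation $\delta_!^{\mc{C}'}\circ h_\ast\to h_\ast\circ\delta_!^{\mc{C}}$ is an equivalence --- equivalently, if and only if the restriction square determined by $h$ lifts to a morphism in $\on{Adj}$.

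\textbf{Assembling the diagram.} Given a diagram $p':K\to\Cat_\infty^\delta$, both $\on{Fun}(I,-)$ and $\on{Fun}(I',-)$ preserve limits in $\Cat_\infty$ (being right adjoints to cartesian product with $I$ and $I'$, respectively), so post-composition produces two diagrams $K\to\Cat_\infty$ related fibrewise by $\delta^\ast_k$. The object-wise hypothesis on $p'$ provides left adjoints $\delta_!^{p'(k)}$, while the morphism-wise hypothesis furnishes exactly the Beck--Chevalley equivalences needed to promote this data to a coherent diagram $(f\dashv g):K\to\on{Adj}$.

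\textbf{Applying Theorem~\ref{thm:limadj}.} Using the equivalences $\on{Fun}(I,\lim p')\simeq\lim\on{Fun}(I,p')$ and $\on{Fun}(I',\lim p')\simeq\lim\on{Fun}(I',p')$, the canonical limit cones present $\delta^\ast$ on $\lim p'$ as the limit of the $\delta^\ast_k$, and supply a candidate left adjoint on $\lim p'$ as the limit of the $\delta_!^{p'(k)}$ (which descends coherently to the apex thanks to the Beck--Chevalley equivalences established above). The hypotheses of Theorem~\ref{thm:limadj} are then satisfied, yielding an adjunction $\delta_!\dashv\delta^\ast$ on $\lim p'$, so $\lim p'\in\Cat_\infty^\delta$. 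The same cone structure identifies each projection $\pi_k:\lim p'\to p'(k)$ as a morphism in $\on{Adj}$, i.e.~as preserving left Kan extensions along $\delta$. The right Kan extension case is dual.

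\textbf{Main obstacle.} The technical crux is the second step: lifting the pointwise adjunctions to a genuine diagram $K\to\on{Adj}$, rather than two diagrams in $\Cat_\infty$ decorated with a pointwise adjunction at each vertex. While the naturality of $\delta^\ast$ is formal, matching the hereditary hypothesis ``edges preserve Kan extensions along $\delta$'' with the precise simplicial data of a morphism in $\on{Adj}$ will require the pairing-of-$\infty$-categories model of $\on{Adj}$ mentioned after Theorem~\ref{thm:limadj}, and thus depends on the explicit construction of $\on{Adj}$ developed there.
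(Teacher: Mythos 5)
Your overall route is the paper's route: translate ``admits/preserves left Kan extensions along $\delta$'' into ``$\delta^*$ is a right adjoint / the square is right-adjointable,'' and then feed everything into Theorem~\ref{thm:limadj}. The difference is organizational: you work one diagram $p':K\to\Cat_\infty^\delta$ at a time and try to lift it to a diagram $K\to\on{Adj}$, whereas the paper makes a single global move, identifying $\Cat_\infty^\delta$ with the pullback $\on{Adj}\times_{\on{Fun}(\Delta^1,\Cat_\infty)}\Cat_\infty$ along the continuous functors $R$ and $\phi:\mc{C}\mapsto\delta^*_{\mc{C}}$, and then invoking Theorem~\ref{thm:LimInLim} for that cospan to get completeness of the pullback and continuity of the projection to $\Cat_\infty$ in one stroke.

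The obstacle you flag at the end --- promoting vertex-wise adjoints and edge-wise Beck--Chevalley equivalences to a \emph{coherent} diagram $K\to\on{Adj}$ --- is a genuine gap as your argument stands, and it is exactly the point the paper's formulation dissolves. The key fact, which you should make explicit, is that $R:\on{Adj}\to\on{Fun}(\Delta^1,\Cat_\infty)$ is equivalent to the inclusion of a \emph{subcategory}: $\on{Adj}\to\on{CPair}^R$ is a pullback of the subcategory inclusion $\on{CPair}^L\subseteq\on{CPair}$, and $\on{CPair}^R\cong\on{Fun}(\Delta^1,\Cat_\infty)$ by Proposition~\ref{prop:FunInfCat}. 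A functor into an $\infty$-category factors through a subcategory if and only if its vertices and edges do, with no higher coherence data to supply; so the composite $K\to\Cat_\infty^\delta\xrightarrow{\phi}\on{Fun}(\Delta^1,\Cat_\infty)$ lifts (essentially uniquely) to $\on{Adj}$ precisely because of the pointwise hypotheses defining $\Cat_\infty^\delta$. Once you add this observation your argument closes up; but note that at that point you have effectively reconstructed the pullback square $\Cat_\infty^\delta\cong\on{Adj}\times_{\on{Fun}(\Delta^1,\Cat_\infty)}\Cat_\infty$, and it is cleaner to state that identification once and let Theorem~\ref{thm:LimInLim} (applied to the cospan of complete $\infty$-categories and continuous functors, using Lemma~\ref{lem:CPairClosed} for the completeness of $\on{Adj}$) do the rest, rather than re-running the lifting for each diagram $p'$.
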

\begin{proof}
Let $\phi:\Cat_\infty\to\on{Fun}(\Delta^1,\Cat_\infty)$ be the functor 	which sends an $\infty$-category $\mc{C}$ to the pullback-functor
\begin{equation}\label{eq:rKanExt}\on{Fun}(I,\mc{C})\leftarrow \on{Fun}(I',\mc{C}):\delta^*.\end{equation}
Evaluating $\phi$ at either endpoint of $\Delta^1$
\begin{align*}
\on{ev}_{\{0\}}\circ \phi:\Cat_\infty\xrightarrow{\mc{C}\mapsto \on{Fun}(I',\mc{C})}\Cat_\infty\\
\on{ev}_{\{1\}}\circ \phi:\Cat_\infty\xrightarrow{\mc{C}\mapsto \on{Fun}(I,\mc{C})}\Cat_\infty
\end{align*}
results in continuous functors (they are right adjoints). Therefore \cite[Corollary~5.1.2.3]{Lurie:2009un} implies that $\phi$ is continuous.

Now $\mc{C}$ admits left Kan extensions along $\delta$ if and only if \eqref{eq:rKanExt} is a right adjoint. In particular, we
 may identify $\Cat_\infty^\delta$ with the pullback 
$$\begin{tikzpicture}
	\mmat{m}{\Cat_\infty^\delta& \Cat_\infty\\\on{Adj}&\on{Fun}(\Delta^1,\Cat_\infty)\\};
	\draw[->] (m-1-2) -- node {$\phi$} (m-2-2);
	\draw[->] (m-2-1) -- node {$R$} (m-2-2);
	\draw[dashed,->] (m-1-1) -- (m-1-2);
	\draw[dashed,->] (m-1-1) -- (m-2-1);
\end{tikzpicture}$$
Where $R:\on{Adj}\to\on{Fun}(\Delta^1,\Cat_\infty)$ is the functor which sends an adjunction $(F\dashv G)$ to its right adjoint $G$.
By Theorem~\ref{thm:limadj}, the functor $R$ is continuous. Therefore, by Theorem~\ref{thm:LimInLim}, $\Cat_\infty^\delta$ admits all small limits and the functor $\Cat_\infty^\delta\to \Cat_\infty$ is continuous.
\end{proof}

 
 \subsubsection{Pairings of $\infty$-categories} We recall the theory of \emph{pairings of $\infty$-categories}; essentially all this material is taken from \cite{Lurie:ZsNVU6Ru}, though we provide proofs for certain details that will be important to us when discussing adjunctions. Recall that the $\infty$-category of pairings
 $$\on{CPair}\subseteq \on{Fun}(\Lambda_0^2,\Cat_\infty)$$
  is the full subcategory consisting of diagrams $$\begin{tikzpicture}
 \node  (m-1-1)  at (0,1) {$\mc{M}$};
 \node  (m-1-2)  at (1,0) {$\mc{D}^{op}$};
 \node  (m-2-1)  at (-1,0) {$\mc{C}$};
\draw[->] (m-1-1) -- node {$\lambda_D$} (m-1-2);
\draw[->] (m-1-1) -- node [swap] {$\lambda_C$} (m-2-1);
\end{tikzpicture}$$
such that $\lambda:\mc{M}\xrightarrow{\lambda_C\times\lambda_D} \mc{C}\times\mc{D}^{op}$ is equivalent to a right fibration. 

 
 Given such a right fibration, $\lambda$ is classified by a functor (cf. \cite[\S~2.2.1]{Lurie:2009un}) $$\mc{D}\times\mc{C}^{op}\to \Spc$$ to the $\infty$-category, $\Spc$, of spaces; or equivalently a functor \begin{equation}\label{eq:lD'}\lambda':\mc{D}\to \on{Fun}(\mc{C}^{op},\Spc)=:\Pre(\mc{C})\end{equation}
 to the $\infty$-category of presheaves over $\mc{C}$. 
  Here $\lambda'$ takes each vertex $d\in \mc{D}$ to the right fibration \begin{equation}\label{eq:Pred}\mc{M}\times_{\mc{D}^{op}}\{d\}\to \mc{C}.\end{equation}


   As in \cite{Lurie:ZsNVU6Ru}, we call an object of $m\in\mc{M}$ \emph{right universal} if it is a terminal object of $\mc{M}\times_{\mc{D}^{op}}\{\lambda_D(m)\}$
and we call a right fibration
  \begin{equation}\label{eq:RFibrPair}\lambda:\mc{M}\to \mc{C}\times\mc{D}^{op}\end{equation}
   a \emph{right representable pairing}, if for each $d\in\mc{D}^{op}$, there exists a right universal object in the fibre $\mc{M}\times_{\mc{D}^{op}}\{d\}$ over $d$. In this case, for each $d\in\mc{D}$, the right fibration \eqref{eq:Pred} is representable (cf. \cite[Proposition 4.4.4.5]{Lurie:2009un}). The Yoneda embedding\footnote{following Theodore Johnson-Freyd's suggestion, we denote the Yoneda embedding by the first Hiragana character of his name, \yoneda.} $\yon:\mc{C}\to \Pre(\mc{C})$ identifies $\mc{C}$ with the full subcategory of $\Pre(\mc{C})$ spanned by the representable presheaves (cf \cite[Proposition~5.1.3.1]{Lurie:2009un}); whence it follows that $\lambda'$ factors through $\mc{C}$,
  $$\begin{tikzpicture}
  	\mmat{m}{\mc{D}&&\mc{C}\\&\Pre(\mc{C})&\\};
  	\draw[->] (m-1-1) -- node[swap] {$\lambda'$} (m-2-2);
  	\draw[right hook->] (m-1-3) -- node {$\yon$} (m-2-2);
  	\draw[dashed,->] (m-1-1) -- node {$\lambda^R$} (m-1-3);
  \end{tikzpicture}$$
  The Yoneda lemma implies that we have a weak equivalence of spaces $$\on{Hom}_\mc{C}\big(c,\lambda^R(d)\big)\cong \{c\}\times_{\mc{C}}\mc{M}\times_{\mc{D}^{op}}\{d\},$$
  which depend naturally on $(c,d)\in \mc{C}^{op}\times\mc{D}$.

  Similarly, an object of $m\in\mc{M}$ is called \emph{left universal} if it is a terminal object of $\mc{M}\times_{\mc{C}}\{\lambda_C(m)\}$, and the
 right fibration \eqref{eq:RFibrPair} is called
  a \emph{left representable pairing}, if for each $c\in\mc{C}$, there exists a left universal object in the fibre $\mc{M}\times_\mc{C}\{c\}$ over $c$. As before, this determines a functor $\lambda^L:\mc{C}\to \mc{D}$; and the yoneda Lemma implies that we have weak equivalences of spaces
  $$\on{Hom}_\mc{C}\big(\lambda^L(c),d\big)\cong \{c\}\times_{\mc{C}}\mc{M}\times_{\mc{D}^{op}}\{d\}\cong\on{Hom}_\mc{C}\big(c,\lambda^R(d)\big)$$
  depending naturally on  $(c,d)\in \mc{C}^{op}\times\mc{D}$. Indeed, $\lambda^R$ is a right adjoint to $\lambda^L$ (cf. \cite{Lurie:ZsNVU6Ru} or \cite[\S~5.2.6]{Lurie:2009un} for more details).

  Suppose that $\mc{M}\to \mc{C}\times\mc{D}^{op}$ and $\mc{M}'\to \mc{C}'\times\mc{D}'^{op}$ are two right representable right fibrations of $\infty$-categories, then a morphism of diagrams
  $$\begin{tikzpicture}
  	\mmat{m}{\mc{M}&\mc{M}'\\\mc{C}\times\mc{D}^{op}&\mc{C}'\times\mc{D}'^{op}\\};
  	\draw[->] (m-1-1) -- (m-2-1);
  	\draw[->] (m-1-2) -- (m-2-2);
  	\draw[dashed,->] (m-1-1) -- node {$\gamma$} (m-1-2);
  	\draw[dashed,->] (m-2-1) -- node {$\alpha\times\beta$} (m-2-2);
  \end{tikzpicture}$$
  is called \emph{right representable} if it takes right universal objects to right universal objects.
  
The $\infty$-category of right-representable pairings $\on{CPair}^R\subseteq\on{CPair}$ is defined to be the subcategory whose objects are equivalent to right representable pairings, and whose morphisms are equivalent to right representable morphisms. The $\infty$-category of left-representable pairings $\on{CPair}^L\subseteq\on{CPair}$ is defined analogously. 

 \begin{Lemma}\label{lem:CPairClosed}
Let $\on{CPair}^R\subseteq\on{CPair}\subseteq \on{Fun}(\Lambda_0^2,\Cat_\infty)$ be the $\infty$-categories defined in \cite{Lurie:ZsNVU6Ru}. Then both subcategories are closed under small limits. 
\end{Lemma}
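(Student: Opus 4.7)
The plan is to handle the two inclusions separately, reducing each to a single technical input: the closure of $\on{CPair}$ reduces to the closure of right fibrations under limits in the arrow $\infty$-category, while the closure of $\on{CPair}^R$ reduces to a fibrewise application of Proposition~\ref{lem:LimOfInit}.

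For $\on{CPair}$, we observe that limits in the functor $\infty$-category $\on{Fun}(\Lambda_0^2, \Cat_\infty)$ are computed pointwise, and that both the binary product and the opposite functors preserve limits in $\Cat_\infty$. Hence, given a small diagram of pairings $(\mc{C}_k \leftarrow \mc{M}_k \rightarrow \mc{D}_k^{op})_{k \in K}$, its pointwise limit carries an induced map
$$\lim_k \mc{M}_k \;\longrightarrow\; (\lim_k \mc{C}_k) \times (\lim_k \mc{D}_k)^{op} \;\cong\; \lim_k (\mc{C}_k \times \mc{D}_k^{op}).$$
The problem reduces to showing that the limit (in $\on{Fun}(\Delta^1, \Cat_\infty)$) of a diagram of right fibrations is again a right fibration. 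We invoke the unstraightening equivalence identifying right fibrations $\mc{N} \to \mc{X}$ with presheaves $\mc{X}^{op} \to \Spc$: the diagram of right fibrations straightens to a coherent family of presheaves, whose limit presheaf over $\lim_k \mc{X}_k$ unstraightens back to the desired limit right fibration.

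For $\on{CPair}^R$, let $p\colon K \to \on{CPair}^R$ be a small diagram, write its pointwise limit as $(\mc{M} \to \mc{C} \times \mc{D}^{op})$ (an object of $\on{CPair}$ by the previous paragraph), and let $d \in \mc{D} \cong \lim_k \mc{D}_k$ correspond to a compatible family $(d_k)_{k\in K}$. The key observation is that pullbacks commute with limits in $\Cat_\infty$, giving
$$\mc{M} \times_{\mc{D}^{op}} \{d\} \;\cong\; \lim_k \bigl(\mc{M}_k \times_{\mc{D}_k^{op}} \{d_k\}\bigr).$$
Each fibre $(\mc{M}_k)_{d_k} := \mc{M}_k \times_{\mc{D}_k^{op}} \{d_k\}$ possesses a terminal object by right representability at $k$, and the edge functors between fibres take terminal objects to terminal objects, precisely because morphisms in $\on{CPair}^R$ preserve right universal objects. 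Applying Proposition~\ref{lem:LimOfInit} to the diagram $k \mapsto \bigl((\mc{M}_k)_{d_k}\bigr)^{op}$ then produces a terminal object in $\mc{M} \times_{\mc{D}^{op}} \{d\}$, establishing right representability of the limit pairing. The detection criterion in part \textbf{B} of Proposition~\ref{lem:LimOfInit} shows both that each projection in the limit cone preserves right universal objects on the nose, and (applied to a competing cone in $\on{CPair}^R$) that the factorization in $\on{CPair}$ furnished by universality actually lies in $\on{CPair}^R$. Hence the limit cone is also universal within $\on{CPair}^R$.

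The main obstacle is the invocation that right fibrations are closed under limits in the arrow $\infty$-category; while morally immediate, making this precise passes through the unstraightening equivalence (or, alternatively, through a direct verification that the right-lifting property defining right fibrations is preserved under limits in $\Cat_\infty$). All subsequent steps are formal consequences of Proposition~\ref{lem:LimOfInit} together with the commutation of pullbacks with limits.
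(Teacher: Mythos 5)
Your argument for $\on{CPair}^R$ is essentially the paper's: identify the fibre of the limit pairing over $d$ with the limit of the fibres over the components $d_k$ (pullbacks commute with limits), observe that each fibre has a terminal object and that the transition functors preserve them because morphisms in $\on{CPair}^R$ preserve right universal objects, and apply Proposition~\ref{lem:LimOfInit} in opposite form. You in fact go slightly further than the paper by checking that the resulting cone is universal \emph{within} $\on{CPair}^R$ and not merely in $\on{CPair}$ --- a worthwhile addition, since $\on{CPair}^R$ is a non-full subcategory. Where you genuinely diverge is the first half: the paper disposes of the closure of $\on{CPair}$ in one line by citing Lurie's remark that $\on{CPair}\subseteq\on{Fun}(\Lambda_0^2,\Cat_\infty)$ is a reflective (accessible) localization, hence closed under small limits; you instead compute the limit pointwise and reduce to the claim that the condition of being equivalent to a right fibration is stable under limits in the arrow $\infty$-category, which you justify only by a straightening/unstraightening sketch. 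That claim is true, but it is the one non-formal input of your route, and your sketch elides the real point: one must know that unstraightening takes the limit of a coherent family of presheaves over varying bases to the limit, computed in $\on{Fun}(\Delta^1,\Cat_\infty)$, of the corresponding right fibrations. That compatibility is essentially the content of the localization statement the paper cites, so your proof is correct modulo supplying that reference (or a direct verification via the characterization of right fibrations by the equivalences $\mc{M}_{/m}\to\mc{M}\times_{\mc{X}}\mc{X}_{/p(m)}$, which is a limit condition and hence stable under limits); the paper's citation is the cleaner packaging of the same fact.
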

\begin{proof}
Since $\on{CPair}\subseteq \on{Fun}(\Lambda_0^2,\Cat_\infty)$ is a reflective localization (cf. \cite[Remark~4.2.9]{Lurie:ZsNVU6Ru}), it is closed under small limits; so we need only show that $\on{CPair}^R\subseteq\on{CPair}$ is also closed under small limits. 

Let $p:K^{op}\to \on{CPair}^R$ be a diagram (for which we wish to compute the limit). 
The composite functor $K^{op}\to \on{CPair}^R\to \on{CPair}$ is classified by a diagram of simplicial sets 
$$\begin{tikzpicture}
\mmat{m}{\mc{M}_p	& \mc{D}_p^{op}\\ \mc{C}_p & K^{op}\\};
\draw[->] (m-1-1) -- node {$\lambda_C$} (m-1-2);
\draw[->] (m-1-1) -- node [swap] {$\lambda_D$} (m-2-1);
\draw[->] (m-1-2) -- node {$\tilde p_D$} (m-2-2);
\draw[->] (m-2-1) -- node [swap]{$\tilde p_C$} (m-2-2);
\end{tikzpicture}$$
where $\lambda_p=\lambda_C\times_K\lambda_D:\mc{M}_p\to \mc{C}_p\times_K\mc{D}_p$ is a right fibration and $\tilde p_C$ and $\tilde p_D$ are Cartesian fibrations. 
The limit of $p$ is a right fibration (cf. \cite[Remark~4.2.9.]{Lurie:ZsNVU6Ru}) 
\begin{equation}\label{eq:LimPair}\mc{M}:=\lim p_M\to \lim p_C\times \lim p_D=:\mc{C}\times\mc{D},\end{equation}
 where $p_M:K\to \Cat_\infty$ is the functor classified by $(\tilde p_C\times_K\tilde p_D)\circ\lambda_p$, and $p_C$ and $p_D$ are classified by $\tilde p_C$ and $\tilde p_D$ respectively. We need to show that \eqref{eq:LimPair} is right representable and that the canonical morphisms to \eqref{eq:LimPair} are right representable.

Now a vertex $d\in \lim p_D=\mc{D}$ can be identified with a Cartesian section $\tilde d:K\to \mc{D}_p$ of $\tilde p_D$ (cf. \cite[Corollary~3.3.3.2.]{Lurie:2009un}). Let $\mc{M}_{p,d}\xrightarrow{\lambda_d} K$ be the Cartesian fibration which fits into the pullback square
$$\begin{tikzpicture}
	\mmat{m}{\mc{M}_{p,d}& \mc{M}_p\\K& \mc{D}_p\\};
	\draw[->] (m-1-1) -- (m-1-2);
	\draw[->] (m-1-1) -- (m-2-1);
	\draw[->] (m-1-2) -- (m-2-2);
	\draw[->] (m-2-1) -- node {$\tilde d$} (m-2-2);
\end{tikzpicture}$$
and let $q:K^{op}\to \Cat_\infty$ be the corresponding functor. Then $\lim q\cong \mc{M}_d:=\mc{M}\times_{\mc{D}}\{d\}$ (since taking pullbacks commutes with taking limits). To show that \eqref{eq:LimPair} is right representable, we need to show that $\mc{M}_d$ has a final object. However, since $p$ takes values in the $\infty$-category $\on{CPair}^R$ of right representable pairings, for each $k\in K$, the pullback $$\mc{M}_{p,d,k}:=\mc{M}_{p,d}\times_{K}\{k\}$$ has a final object, and for each morphisms $(k\to k')$ in $K^{op}$, the corresponding functor $\mc{M}_{p,d,k}\to \mc{M}_{p,d,k'}$ takes final objects to final objects. Thus, by \cite[Theorem.~3.16.]{Riehl:2014ut} (or Proposition~\ref{thm:LimInLim}), the limit $\mc{M}_d\cong \lim q$ has a final object, and moreover the canonical morphisms $\mc{M}_d\to \mc{M}_{p,d,k}$ preserve final objects.

It follows that $\lim p\in \on{CPair}$ is in fact an element of $\on{CPair}^R$ and that the limit cone is a diagram in $\on{CPair}^R$; i.e. $\on{CPair}^R\subseteq \on{CPair}$ is closed under small limits.

\end{proof}

\begin{Proposition}\label{prop:FunInfCat}
There are equivalences of $\infty$-categories 
\begin{align}
\on{CPair}^L&\cong \on{Fun}(\Delta^1,\Cat_\infty),\\
\on{CPair}^R&\cong \on{Fun}(\Delta^1,\Cat_\infty),
\end{align}
which associate a left representable pairing 	
$\lambda:\mc{M}\to \mc{C}\times\mc{D}^{op}$ to the functor $\lambda^L:\mc{C}\to \mc{D}$, and a right representable pairing  $\lambda:\mc{M}\to \mc{C}\times\mc{D}^{op}$ to the functor $\lambda^R:\mc{D}\to \mc{C}$.
\end{Proposition}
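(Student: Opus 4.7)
The plan is to construct an inverse equivalence in the right-representable case; the left-representable case is formally dual, since reversing the roles of $\mc{C}$ and $\mc{D}$ identifies left-representable pairings $\mc{M} \to \mc{C} \times \mc{D}^{op}$ with right-representable pairings $\mc{M}^{op} \to \mc{D} \times \mc{C}^{op}$. The functor $\Psi: \on{CPair}^R \to \on{Fun}(\Delta^1, \Cat_\infty)$ sending $\lambda: \mc{M} \to \mc{C} \times \mc{D}^{op}$ to $\lambda^R: \mc{D} \to \mc{C}$ has essentially already been constructed in the preceding discussion: the classifying functor $\lambda': \mc{D} \to \Pre(\mc{C})$ factors, by right representability, through the Yoneda embedding $\yon: \mc{C} \hookrightarrow \Pre(\mc{C})$, and $\lambda^R$ is the resulting lift.

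For the candidate inverse $\Phi: \on{Fun}(\Delta^1, \Cat_\infty) \to \on{CPair}^R$, I would send a functor $F: \mc{D} \to \mc{C}$ to the right fibration $\mc{M}_F \to \mc{C} \times \mc{D}^{op}$ classified by the composite $\yon \circ F: \mc{D} \to \Pre(\mc{C})$; equivalently, by the functor $\mc{C}^{op} \times \mc{D} \to \Spc$ given by $(c, d) \mapsto \on{Map}_\mc{C}(c, F(d))$. A concrete model is the pullback $\on{Fun}(\Delta^1, \mc{C}) \times_{\on{Fun}(\{1\}, \mc{C})} \mc{D}$, whose objects are triples $(c, d, c \to F(d))$. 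I would then verify that $\Phi$ lands in $\on{CPair}^R$ by observing that the fiber $\mc{M}_F \times_{\mc{D}^{op}} \{d\}$ is equivalent to the slice $\mc{C}_{/F(d)}$, whose terminal object $\on{id}_{F(d)}$ is a right universal object. Moreover, a morphism in $\on{Fun}(\Delta^1, \Cat_\infty)$, i.e., a commuting square between two such functors, induces a morphism of right fibrations that sends these distinguished identities to identities, hence is right representable in the sense of the excerpt.

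Finally, I would show that $\Phi$ and $\Psi$ are mutually inverse. The composite $\Psi \circ \Phi$ is the identity since $\mc{M}_F$ is classified, by construction, by $\yon \circ F$, so $(\Phi(F))^R \simeq F$ by the very definition of $\lambda^R$. For the other composite $\Phi \circ \Psi \simeq \on{id}$, given a right representable pairing $\lambda$, both $\lambda$ and $\Phi(\lambda^R)$ are right fibrations over $\mc{C} \times \mc{D}^{op}$ classified by $\yon \circ \lambda^R: \mc{D} \to \Pre(\mc{C})$, hence are canonically equivalent.

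The main obstacle will be coherence: ensuring that these point-wise constructions assemble into honest $\infty$-functors whose composites are naturally equivalent to the identity, rather than merely agreeing on equivalence classes. This follows formally from two inputs: the straightening/unstraightening equivalence of $\infty$-categories between right fibrations over $\mc{E}$ and presheaves $\mc{E}^{op} \to \Spc$ (cf.\ \cite[\S 2.2.1]{Lurie:2009un}), which transports the classifying-functor description into an equivalence at the level of $\infty$-categories; and the fully faithfulness of the Yoneda embedding, which ensures that the factorization $\lambda' = \yon \circ \lambda^R$ depends functorially (and uniquely up to contractible choice) on the right-representable pairing.
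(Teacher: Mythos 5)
Your route is genuinely different from the paper's: the paper never constructs the assignment $\lambda\mapsto\lambda^R$ directly as a functor. Instead it builds a single functor $A$ \emph{into} $\on{CPair}^L$ out of a pullback model $\widetilde{\on{Fun}(\Delta^1,\Cat_\infty)}$, using the equivalence $\phi:\on{CPair}^{\textrm{perf}}\to\Cat_\infty$ for perfect pairings and a limit construction, and then checks that $A$ is essentially surjective and fully faithful (the latter by a mapping-space computation with \cite[Proposition~4.2.10]{Lurie:ZsNVU6Ru}). You instead propose to write down both $\Phi$ and $\Psi$ explicitly and verify they are mutually inverse. The direction $\Phi$ is fine in spirit, but the step you flag at the end as ``coherence'' is not a routine afterthought --- it is the actual mathematical content of the proposition, and it is precisely what the perfect-pairing detour is designed to avoid. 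Constructing $\Psi:\on{CPair}^R\to\on{Fun}(\Delta^1,\Cat_\infty)$, i.e.\ choosing the representing objects $\lambda^R(d)$ coherently in $\lambda$, is not delivered by straightening/unstraightening plus Yoneda as stated: straightening is an equivalence over a \emph{fixed} base, whereas in $\on{CPair}^R$ the base $\mc{C}\times\mc{D}^{op}$ varies with the object, so the classifying functor $\lambda'$ is itself only produced up to a lax structure; and one must further check that the right-representable morphisms of pairings (those preserving right-universal objects) correspond exactly to genuinely commuting squares of functors rather than lax ones --- this is where right-universality does work that full faithfulness of $\yon$ alone does not. The cleanest repair within your own framework is to drop $\Psi$ entirely, observe that $\Phi$ \emph{is} an honest functor (it is built from limits and a fixed universal construction), and prove directly that $\Phi$ is fully faithful and essentially surjective; at that point your argument converges with the paper's.

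There is also a concrete variance error in your model for $\Phi(F)$: the simplicial set $\on{Fun}(\Delta^1,\mc{C})\times_{\on{Fun}(\{1\},\mc{C})}\mc{D}$ lives over $\mc{C}\times\mc{D}$, not over $\mc{C}\times\mc{D}^{op}$, and is not equivalent to a right fibration over the latter (its morphisms are covariant in the $\mc{D}$-direction; already for $F=\on{id}_{\mc{C}}$ you get the arrow category over $\mc{C}\times\mc{C}$ rather than the twisted arrow category over $\mc{C}\times\mc{C}^{op}$). The correct model is the pullback of the twisted arrow category, $\on{Tw}(\mc{C})\times_{\mc{C}^{op}}\mc{D}^{op}\to\mc{C}\times\mc{D}^{op}$, which is the right fibration classified by $(c,d)\mapsto\on{Map}_{\mc{C}}(c,F(d))$ and whose fibre over $d$ is $\mc{C}_{/F(d)}$ as you intend. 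This is fixable, but as written the claimed right fibration is not one.
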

\begin{proof}
As in \cite{Lurie:ZsNVU6Ru}, we say that a right fibration \eqref{eq:RFibrPair} is a perfect pairing if it is both left and right representable, and an object $m\in \mc{M}$ is left universal if and only if it is right universal.
	Let $\on{CPair}^{\textrm{perf}}\subset\on{CPair}^L$ be the full subcategory spanned by the perfect pairings. Let $\phi:\on{CPair}^{\textrm{perf}}\to\Cat_\infty$ denote the forgetful functor which sends a perfect pairing \eqref{eq:RFibrPair} to $\mc{C}$; and let $\widetilde{\on{Fun}(\Delta^1,\Cat_\infty)}$ denote the $\infty$-category fitting into the pullback square
	$$\begin{tikzpicture}
		\mmat{m}{\widetilde{\on{Fun}(\Delta^1,\Cat_\infty)}&\on{CPair}^{\textrm{perf}}\\\on{Fun}(\Delta^1,\Cat_\infty)&\Cat_\infty\\};
  	\draw[dashed,->] (m-1-1) -- (m-2-1);
  	\draw[->] (m-1-2) -- node {$\phi$} (m-2-2);
  	\draw[dashed,->] (m-1-1) -- (m-1-2);
  	\draw[->] (m-2-1) -- node {$\on{ev}_1$} (m-2-2);
	\end{tikzpicture}$$
	Note: since the bottom arrow is a Cartesian fibration (cf. \cite[Corollary~2.4.7.11]{Lurie:2009un}), this homotopy pullback can be computed as a pullback of simplicial sets (cf. \cite[Corollary 3.3.1.4]{Lurie:2009un}).
	Since $\phi$ is an equivalence of $\infty$-categories (cf. \cite[Remark~4.2.12]{Lurie:ZsNVU6Ru}), the left arrow defines an equivalence between $\widetilde{\on{Fun}(\Delta^1,\Cat_\infty)}$ and $\on{Fun}(\Delta^1,\Cat_\infty)$.
	
	The inclusion $\on{CPair}^{\textrm{perf}}\subset \on{Fun}(\Lambda_0^2,\Cat_\infty)$ allows us to identify $\widetilde{\on{Fun}(\Delta^1,\Cat_\infty)}$ with diagrams of the form 
	\begin{equation}\label{eq:tilfun}\begin{tikzpicture}
	\node (c) at (-1,1)  {$\mc{C}$};
	\node (d) at (0,0)  {$\mc{D}$};
	\node (p) at (1,1)  {$\mc{P}$};
	\node (d1) at (2,0)  {$\tilde{\mc{D}}^{op}$};
	\draw[->] (c)-- node {$f$} (d);
	\draw[->] (p)-- (d);
	\draw[->] (p)-- (d1);
\end{tikzpicture}\end{equation}
where $\mc{P}\to \mc{D}\times\tilde{\mc{D}}^{op}$ is a perfect pairing. Taking the limit of such a diagram yields
	$$\begin{tikzpicture}
	\node (m) at (1,2) {$\mc{M}$};
	\node (c) at (-1,1)  {$\mc{C}$};
	\node (d) at (0,0)  {$\mc{D}$};
	\node (p) at (1,1)  {$\mc{P}$};
	\node (d1) at (2,0)  {$\tilde{\mc{D}}^{op}$};
	\draw[->] (c)-- (d);
	\draw[->] (p)-- (d);
	\draw[->] (p)-- (d1);
	\draw[dashed,->] (m) -- (c);
	\draw[dashed,->] (m) -- (d);
	\draw[dashed,->] (m) -- (p);
	\draw[dashed,->] (m) -- (d1);
\end{tikzpicture}$$
where \begin{equation}\label{eq:lam'}\lambda:\mc{M}\to \mc{C}\times\tilde{\mc{D}}^{op}\end{equation} is a left-representable pairing. Thus we get a functor $$A:\widetilde{\on{Fun}(\Delta^1,\Cat_\infty)}\to \on{CPair}^L,$$
sending an object of the form \eqref{eq:tilfun} to the left representable pairing \eqref{eq:lam'}. Notice that, by construction, the functor $f$ appearing in \eqref{eq:tilfun} is equivalent to $\lambda^L:\mc{C}\to \tilde{\mc{D}}$. 

It remains to show that $A$ is an equivalence of categories. The essential surjectivity of $A$ is explained in\cite[Remark~4.2.13]{Lurie:ZsNVU6Ru}.
We argue that $A$ is fully faithful: Suppose that 
	$$\begin{tikzpicture}
	\node at (-2,.5) {$\tilde{f}=$};
	\node (m) at (1,2) {$\mc{M}$};
	\node (c) at (-1,1)  {$\mc{C}$};
	\node (d) at (0,0)  {$\mc{D}$};
	\node (p) at (1,1)  {$\mc{P}$};
	\node (d1) at (2,0)  {$\tilde{\mc{D}}^{op}$};
	\draw[->] (c)-- node {$f$} (d);
	\draw[->] (p)-- (d);
	\draw[->] (p)-- (d1);
	\draw[dashed,->] (m) -- (c);
	\draw[dashed,->] (m) -- (d);
	\draw[dashed,->] (m) -- (p);
	\draw[dashed,->] (m) -- (d1);
	\node at (3,.5) {and};
	\node at (4,.5) {$\tilde{f}'=$};
	\node (m') at (7,2) {$\mc{M}'$};
	\node (c') at (5,1)  {$\mc{C}'$};
	\node (d') at (6,0)  {$\mc{D}'$};
	\node (p') at (7,1)  {$\mc{P}'$};
	\node (d1') at (8,0)  {$\tilde{\mc{D}}'^{op}$};
	\draw[->] (c')-- node {$f'$} (d');
	\draw[->] (p')-- (d');
	\draw[->] (p')-- (d1');
	\draw[dashed,->] (m') -- (c');
	\draw[dashed,->] (m') -- (d');
	\draw[dashed,->] (m') -- (p');
	\draw[dashed,->] (m') -- (d1');
\end{tikzpicture}$$
are a pair of objects in $\widetilde{\on{Fun}(\Delta^1,\Cat_\infty)}$, with 
$$A(\tilde{f})=\big(\lambda:\mc{M}\to \mc{C}\times\tilde{\mc{D}}^{op}\big),\text{ and }A(\tilde{f}')=\big(\lambda':\mc{M}'\to \mc{C}'\times\tilde{\mc{D}}'^{op}\big).$$
We need to show that the natural map between the mapping spaces
\begin{equation}\label{eq:AFullFaith}A:\on{Map}_{\widetilde{\on{Fun}(\Delta^1,\Cat_\infty)}}(\tilde f,\tilde f')\to \on{Map}_{\on{CPair}^L}(\mc{M},\mc{M}')\end{equation}
is a homotopy equivalence.\footnote{For two left-representable pairings $\lambda:\mc{M}\to \mc{C}\times\tilde{\mc{D}}^{op}$ and $\lambda':\mc{M}'\to \mc{C}'\times\tilde{\mc{D}}'^{op}$, the mapping space
$\on{Map}_{\on{CPair}^L}(\mc{M},\mc{M}')$ is the subspace of  $$\on{Map}_{\Cat_\infty}(\mc{C},\mc{C}')\times^h_{\on{Map}_{\Cat_\infty}(\mc{M},\mc{C}')}\on{Map}_{\Cat_\infty}(\mc{M},\mc{M}')\times^h_{\on{Map}_{\Cat_\infty}(\mc{M},\tilde{\mc{D}}')}\on{Map}_{\Cat_\infty}(\tilde{\mc{D}},\tilde{\mc{D}}')$$ which preserves left universal objects. Notice that the homotopy pullbacks can be taken to be strict pullbacks when $\lambda$ and $\lambda'$ are right fibrations.}

On the one hand, 
$$\on{Map}_{\widetilde{\on{Fun}(\Delta^1,\Cat_\infty)}}(\tilde f,\tilde f')
\cong \on{Map}_{\Cat_\infty}(\mc{C},\mc{C}')\times^h_{\on{Map}_{\Cat_\infty}(\mc{C},\mc{D}')}\on{Map}_{\on{CPair}^L}(\mc{P},\mc{P}')$$
But, since $\mc{P}'\to \mc{D}'\times\tilde{\mc{D}}'^{op}$ is a perfect pairing, \cite[Proposition~4.2.10]{Lurie:ZsNVU6Ru} shows we have homotopy equivalences of mapping spaces 
$$\on{Map}_{\on{CPair}^L}(\mc{P},\mc{P}')\xrightarrow{\cong}\on{Map}_{\Cat_\infty}(\tilde{\mc{D}},\tilde{\mc{D}}')\xleftarrow{\cong}\on{Map}_{\on{CPair}^L}(\mc{M},\mc{P}').$$ Consequently, 
\begin{subequations}\label{eq:HomEquivPerfPair}\begin{equation}\on{Map}_{\widetilde{\on{Fun}(\Delta^1,\Cat_\infty)}}(\tilde f,\tilde f')
\cong \on{Map}_{\Cat_\infty}(\mc{C},\mc{C}')\times^h_{\on{Map}_{\Cat_\infty}(\mc{C},\mc{D}')}\on{Map}_{\on{CPair}^L}(\mc{M},\mc{P}')\end{equation}

 On the other hand, since $\mc{M}'$ is a pullback of $\mc{P}'$, we have a homotopy equivalence of mapping spaces \begin{equation}\on{Map}_{\on{CPair}^L}(\mc{M},\mc{M}')\cong \on{Map}_{\Cat_\infty}(\mc{C},\mc{C}')\times^h_{\on{Map}_{\Cat_\infty}(\mc{C},\mc{D}')}\on{Map}_{\on{CPair}^L}(\mc{M},\mc{P}').\end{equation}\end{subequations}
 It follows from \eqref{eq:HomEquivPerfPair} that \eqref{eq:AFullFaith} is a homotopy equivalence. In particular, $A$ is fully faithful.
%

\end{proof}
 

\subsubsection{The $\infty$-category of Adjunctions, and the proof of Theorem~\ref{thm:limadj}}
We are now in a position to define the $\infty$-category of adjunctions and to prove Theorem~\ref{thm:limadj}.

\begin{Definition}
	The $\infty$-category of adjunctions, $\on{Adj}$ is defined as the pullback of $\infty$-categories
\begin{equation}\label{eq:AdjPull}\begin{tikzpicture}
	\node (adj) at (-7,0) {$\on{Adj}$};
	\node (fun1L) at (-4,-1) {$\on{Fun}(\Delta^1,\Cat_\infty)$};
	\node (fun1R) at (-4,1) {$\on{Fun}(\Delta^1,\Cat_\infty)$};
	\node (cpL) at (-1,-1) {$\on{CPair}^L$};
	\node (cpR) at (-1,1) {$\on{CPair}^R$};
	\node (cp) at (1,0) {$\on{CPair}$};
	\draw[->] (fun1L) -- node {$\cong$} (cpL);
	\draw[->] (fun1R) -- node {$\cong$} (cpR);
	\draw[->] (cpL) -- (cp);
	\draw[->] (cpR) -- (cp);
	\draw[dashed,->] (adj) -- (fun1L);
	\draw[dashed,->] (adj) -- (fun1R);
\end{tikzpicture}\end{equation}
\end{Definition}

\begin{proof}[Proof of Theorem~\ref{thm:limadj}]
By 	Lemma~\ref{lem:CPairClosed} and Proposition~\ref{thm:LimInLim}, each of the categories in the diagram \eqref{eq:AdjPull}
are complete, and each of the functors in the diagram preserve small limits. Thus, any diagram $(f\dashv g):K\to \on{Adj}$ admits a limit. Moreover, the limiting left adjoint $F:\mc{C}\to \mc{D}$ is a limiting functor for the diagram $f:K\to  \on{Fun}(\Delta^1,\Cat_\infty)$. In particular, $F$ can be characterized as in the statement of the theorem (cf. \cite[Corollary~5.1.2.3]{Lurie:2009un}). 

Similarly, the limiting right adjoint $G:\mc{D}\to\mc{C}$ is a limiting functor for the diagram $g:K\to  \on{Fun}(\Delta^1,\Cat_\infty)$;  so $G$ can be characterized as in the statement of the theorem.
\end{proof}

\section{Complete $k$-fold Segal objects.}
Let $\BDelta$ denote the simplex category, and for any simplicial set $K$, let $\BDelta_{/K}\to \BDelta$ denote the corresponding category of simplices\footnote{The objects of $\BDelta_{/K}$ over $[n]\in \BDelta$ are simplicial maps $\Delta^n\to K$ from the standard $n$-simplex, and morphisms in $\BDelta_{/K}$ over a morphism $f:[m]\to [n]$ are commutative diagrams
$$\protect\tikz{
	\node (m) at (-1,1) {$\Delta^m$};
	\node (n) at (1,1) {$\Delta^n$};
	\node (k) at (0,0) {$K$};
	\draw[->] (m) -- node {$f$} (n);
	\draw[->] (m) -- (k);
	\draw[->] (n) -- (k);
}$$
Equivalently, $\big(\BDelta_{/K}\to \BDelta\big)=\big(\int^{\BDelta} K\to \BDelta\big)$ is the Grothendieck fibration (or category of elements) associated the functor $K:\BDelta^{op}\to \on{Sets}$.} of $K$.
The spine of the standard $n$-simplex is the subsimplicial set
$$\on{Sp}(n)=\overset{n}{\overbrace{\Delta^{\{0,1\}}\coprod_{\Delta^{\{1\}}}\cdots \coprod_{\Delta^{\{n-1\}}}\Delta^{\{n-1,n\}}}}\subseteq \Delta^n.$$
generated by the 1-simplices $\Delta^{\{i,i+1\}}\subseteq\Delta^n$. The inclusion $\on{Sp}(n)\subseteq \Delta^n$ is a categorical equivalence,\footnote{In fact, the model structure on simplicial sets for $\infty$-category is the Cisinski model structure induced by the localizer which consists of the inclusions $\on{Sp}(n)\subseteq \Delta^n$ (cf. \cite{Ara:2012uj}).} and a simplical object $X_\bullet:\BDelta\to \mc{X}$ in an $\infty$-category $\mc{X}$ is called a \emph{category object} if it satisfies the so-called \emph{Segal conditions} (cf. \cite{Rezk:2001ey}): i.e. for each $n\geq 0$, the natural map
 \begin{equation}\label{eq:Segal}X_n\to \lim_{\BDelta^{op}_{/\on{Sp}(n)}} X_\bullet \cong \overset{n}{\overbrace{X_1\times_{X_0}\cdots \times_{X_0}X_1}}\end{equation}
 is an equivalence. 
 
 Given a category object $X_\bullet$ in $\mc{X}$, one should think of $X_0\in \mc{X}$ as describing the objects of an $(\infty,1)$-category internal to $\mc{X}$, $X_1\in \mc{X}$ as describing the morphisms of an $(\infty,1)$-category internal to $\mc{X}$, $X_i\in\mc{X}$ as describing the object classifying composable $i$-tuples of morphisms, and the various structural maps between the $X_i$'s as describing the homotopy-associative composition and units.  
 
 Now suppose that $\mc{X}$ is an $\infty$-topos. We let $\on{Cat}(\mc{X})\subseteq \on{Fun}(\BDelta^{op},\mc{X})$ denote the full subcategory spanned by the category objects. Unfortunately, $\on{Cat}(\mc{X})$ doesn't describe the correct homotopy theory of $(\infty,1)$-categories internal to $\mc{X}$; one must localize with respect to an appropriate class of ``fully faithful and essentially surjective functors''. In order to describe this phenomena in more detail, we recall that a category object $X_\bullet\in \on{Cat}(\mc{X})$ is called a groupoid object if all it's morphisms are invertible, i.e. $$X_2\to \lim_{\BDelta^{op}_{/\Lambda^2_0}} X_\bullet$$ is an equivalence,
 where $$\Lambda^2_0=\Delta^{0,1}\coprod_{\Delta^{0}}\Delta^{0,2}\subset \Delta^2.$$
 We let $\on{Gpd}(\mc{X})\subseteq\on{Cat}(\mc{X})$ denote the full subcategory spanned by the groupoid objects. The \emph{underlying groupoid functor} $\on{Gp}:\on{Cat}(\mc{X})\to \on{Gpd}(\mc{X})$ is any right adjoint to the inclusion. For a category object $X_\bullet$, one should think of $\on{Gp} X_\bullet$ as describing the ``maximal groupoid contained in $X_\bullet$'', which classifies the ``objects'' of the internal $(\infty,1)$-category $X_\bullet$.
 
The \emph{fully faithful and essentially surjective morphisms} (cf. \cite[Definition~1.2.12]{Lurie:2009uz}), are those morphisms of category objects $X_\bullet\to Y_\bullet$ in $\mc{X}$ which are 
	\begin{description}
	\item[fully faithful] the diagram
	$$\begin{tikzpicture}
	\mmat{m}{X_1&Y_1\\
	X_0\times X_0&Y_0\times Y_0\\};
	\draw[->] (m-1-1) --  (m-1-2);
	\draw[->] (m-1-1) --  (m-2-1);
	\draw[->] (m-1-2) --  (m-2-2);
	\draw[->] (m-2-1) -- (m-2-2);
\end{tikzpicture}$$
	is a pullback square, and
	\item[essentially surjective] the map $$\abs{\on{Gp}X_\bullet}\to\abs{\on{Gp}Y_\bullet}$$ 
	between the classifying spaces of objects is an equivalence, where $\abs{-}$ denotes the geometric realization:
	$$\abs{Z_\bullet}=\colim_{\BDelta^{op}}Z_\bullet$$ for any $Z_\bullet:\BDelta^{op}\to \mc{X}$.
	\end{description}
	Localizing along the fully faithful and essentially surjective morphisms of category objects, one obtains $\CSS(\mc{X})\subseteq \on{Cat}(\mc{X})$, the correct homotopy theory of $(\infty,1)$-category objects in $\mc{X}$. Following Rezk \cite{Rezk:2001ey} Lurie proves \cite[Theorem~1.2.13]{Lurie:2009uz} that $\CSS(\mc{X})\subseteq \on{Cat}(\mc{X})$ is equivalent to the full subcategory spanned by the \emph{complete Segal objects}: those category objects $X_\bullet\in \on{Cat}(\mc{X})$ such that $\on{Gp} X_\bullet$ is essentially constant (i.e. $\on{Gp} X_\bullet:\BDelta^{op}\to \mc{X}$ is equivalent to a constant functor).

 
To describe $(\infty,k)$-category objects in $\mc{X}$, will be interested in the following full subcategories of multisimplicial objects 
\begin{equation}\label{eq:CSShier}\CSS_k(\mc{X})\subseteq \on{Seg}_k(\mc{X})\subseteq \on{Cat}^k(\mc{X})\subseteq \on{Fun}\big((\BDelta^k)^{op},\mc{X}\big).\end{equation}

Here $\on{Cat}^k(\mc{X})$ is spanned by the $k$-uple category objects, i.e those multisimplicial objects $X_{\bullet,\dots,\bullet}$ such that for any $1\leq i\leq k$ and any $n_1,\dots,\hat n_i,\dots, n_k\geq 0$, the simplicial object 
\begin{subequations}\label{eq:kfoldSeg0}\begin{equation}\label{eq:kupleCat}X_{n_1,\dots, n_{i-1},\bullet,n_{i+1},\dots,n_k}:\BDelta^{op}\to\mc{X}\end{equation} is a category object. As before $X_{0,\dots,0}$ encodes the objects of the $k$-uple category internal to $\mc{X}$, but now each of $X_{1,0,\dots,0}$, $X_{0,1,0,\dots,0}$, $\dots,X_{0,\dots,0,1}$ encodes a different type of 1-morphism; while each of $X_{i_1,\dots,i_k}$  (with $0\leq i_1,\dots,i_k\leq 1$) represents a different type of $(i_1+\dots+i_k)$-morphism.
 As before, $\on{Cat}^k(\mc{X})$ does not model the correct homotopy theory of $k$-uple categories internal to $\mc{X}$; one must localize with respect to an appropriate class of ``fully faithful and essentially surjective functors''.

Next, $\on{Seg}_k(\mc{X})$ is spanned by the $k$-fold Segal objects (cf. \cite{Barwick:2005wl}), i.e. those $k$-uple category objects $X_{\bullet,\dots,\bullet}$ such that for every $1\leq i\leq k$, and any $n_1,\dots,n_{i-1}\geq 0$ the multisimplicial object
\begin{equation}\label{eq:kfoldSeg} X_{n_1,\dots,n_{i-1},0,\bullet,\dots,\bullet}\end{equation}
\end{subequations}
is equivalent to a constant functor. The idea behind this condition is that while a $k$-uple category object has ${k\choose i}$ different types of $i$-morphisms, there is only one non-trivial type of $i$-morphism in a $k$-fold Segal object. More specifically, $X_{(\bullet,\dots,\bullet)}:(\BDelta^{op})^n\to \mc{X}$ encodes the data of an $(\infty,k)$-category as follows: 
\begin{itemize}
	\item $X_{(0,\dots,0)}$ encodes the objects,
	\item $X_{(1,0,\dots,0)}$ encodes the $1$-morphisms, 
	\item $X_{(1,1,0,\dots,0)}$ encodes the $2$-morphisms,
	\item $\dots$
	\item and $X_{(1,\dots,1)}$ encodes the $k$-morphisms.
\end{itemize}
the remaining objects $X_{(n_1,\dots,n_k)}$ encode composable configurations of morphisms, while the homotopy coherent associative composition and unit are encoded in the various structural maps between the spaces $X_{(n_1,\dots,n_k)}$.
 
 Note that $\on{Seg}_k(\mc{X})$ does not model the correct homotopy theory of $k$-fold categories  internal to $\mc{X}$. However, when $\mc{X}$ is an $\infty$-topos (e.g. $\mc{X}=\Spc$), we may localize $\on{Seg}_k(\mc{X})$ with respect to an appropriate class of ``fully faithful and essentially surjective functors'', to obtain
$\CSS_k(\mc{X})$, which is spanned by those $k$-fold Segal objects which satisfy a certain \emph{completeness condition}; we refer the reader to  \cite{Barwick:2005wl,Barwick:2011wl,Lurie:2009uz,Haugseng:2014vw} for more details.


\begin{Lemma}\label{lem:ContOfCatObj}
Suppose $\mc{D}$ is an $\infty$-category,  $\mc{X}$ is a presentable $\infty$-category, and $\tilde{\mc{X}}$ is any reflective localization of $\on{Cat}^k(\mc{X})$ (the two main examples being $\tilde{\mc{X}}=\on{Seg}_k(\mc{X})$, or when $\mc{X}$ is an $\infty$-topos, $\tilde{\mc{X}} = \CSS_k(\mc{X})$).
Then a functor 
\begin{equation}\label{eq:CSSFunct}\mc{D}\xrightarrow{d\mapsto F(d)_{(\bullet,\dots,\bullet)}}\tilde{\mc{X}}\end{equation}
is continuous if and only if each of the composite functors 
\begin{equation}\label{eq:CSSMorphFunct}\mc{D}\xrightarrow{d\mapsto F(d)_{(i_1,\dots,i_k)}} \mc{X},\quad 0\leq i_1,\dots, i_k\leq 1\end{equation}
obtained by evaluating at $(i_1,\dots,i_k)\in \BDelta^k$ for $0\leq i_1,\dots, i_k\leq 1$, are continuous.
\end{Lemma}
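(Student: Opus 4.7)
The plan is to show that in both $\on{Cat}^k(\mc{X})$ and $\tilde{\mc{X}}$ limits are computed pointwise in $\mc{X}$, and then to use the iterated Segal conditions to bootstrap continuity from the $2^k$ ``corner'' evaluations indexed by $(i_1,\dots,i_k)\in\{0,1\}^k$ to all evaluations indexed by $\BDelta^k$. Since $\tilde{\mc{X}}$ is a reflective localization, the inclusion $\tilde{\mc{X}}\hookrightarrow \on{Cat}^k(\mc{X})$ is a right adjoint and hence preserves small limits. The further inclusion $\on{Cat}^k(\mc{X})\hookrightarrow \on{Fun}((\BDelta^k)^{op},\mc{X})$ is also limit-preserving, since the $k$-uple category object condition asserts that the maps \eqref{eq:Segal} (applied in each variable) are equivalences, a condition stable under (pointwise) limits. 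Consequently every evaluation $\on{ev}_{(n_1,\dots,n_k)}\colon \tilde{\mc{X}}\to\mc{X}$ preserves small limits, and precomposition with $F$ immediately gives the \emph{only if} direction.

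For the \emph{if} direction, the same pointwise description shows that $F$ is continuous if and only if $d\mapsto F(d)_{(n_1,\dots,n_k)}$ is continuous for \emph{every} $(n_1,\dots,n_k)\in\BDelta^k$, not just for $i_j\in\{0,1\}$. The key step is to express the value at an arbitrary multi-index as a finite iterated fibre product of values at the corner multi-indices, naturally in $d$. Applying the Segal condition \eqref{eq:Segal} in the $j$-th simplicial direction to the $k$-uple category object $F(d)$ yields a natural equivalence
\begin{equation*}
F(d)_{(n_1,\dots,n_k)}\xrightarrow{\simeq} F(d)_{(\dots,n_{j-1},1,n_{j+1},\dots)}\times_{F(d)_{(\dots,n_{j-1},0,n_{j+1},\dots)}}\cdots\times_{F(d)_{(\dots,n_{j-1},0,n_{j+1},\dots)}} F(d)_{(\dots,n_{j-1},1,n_{j+1},\dots)}
\end{equation*}
($n_j$ copies of the top factor, with the $j$-th slot replaced by $1$ or $0$ and the other slots unchanged). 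Iterating this reduction over $j=1,2,\dots,k$ expresses $F(d)_{(n_1,\dots,n_k)}$ as a finite iterated fibre product of terms $F(d)_{(i_1,\dots,i_k)}$ with $i_j\in\{0,1\}$. Since fibre products are finite limits and limits commute with limits, continuity of each of the $2^k$ corner functors \eqref{eq:CSSMorphFunct} propagates to continuity of $d\mapsto F(d)_{(n_1,\dots,n_k)}$ for every $(n_1,\dots,n_k)\in\BDelta^k$, and hence to continuity of $F$.

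The main potential obstacle is ensuring that the Segal decomposition is \emph{natural} in $d$ rather than merely existing pointwise. This, however, is automatic: $\on{Cat}^k(\mc{X})$ is defined as a \emph{full} subcategory of $\on{Fun}((\BDelta^k)^{op},\mc{X})$, so the Segal equivalences and the structure maps assembling the fibre product are functorial in $F(d)$ at the level of morphisms in $\on{Cat}^k(\mc{X})$. No finer use of the completeness condition distinguishing $\CSS_k(\mc{X})$ from $\on{Seg}_k(\mc{X})$ or $\on{Cat}^k(\mc{X})$ is needed, beyond the hypothesis that $\tilde{\mc{X}}$ is a reflective localization.
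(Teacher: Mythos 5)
Your proof is correct and follows essentially the same route as the paper's: both directions rest on limits being detected pointwise in $\on{Fun}\big((\BDelta^k)^{op},\mc{X}\big)$ (via the reflective localizations) together with the Segal conditions expressing each $F(d)_{(n_1,\dots,n_k)}$ as a finite limit of the corner values, naturally in $d$. The paper packages that last step as a right Kan extension along the spine inclusion inside the inert subcategory $\BDelta_{\on{int}}^k$ and invokes continuity of the global Kan-extension right adjoint, whereas you write out the iterated fibre product explicitly, but the content is identical.
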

\begin{proof}
Without loss of generality, we may take $\tilde{\mc{X}}=\on{Cat}^k(\mc{X})$. Recall that
	$\on{Cat}^k(\mc{X})$ is a reflective localization of $\on{Fun}\big((\BDelta^k)^{op},\mc{X}\big)$ (cf. \cite{Lurie:2009uz}), so \eqref{eq:CSSFunct} is continuous if and only if the composite functor
	$$\mc{D}\xrightarrow{d\mapsto F(d)_{(\bullet,\dots,\bullet)}}\on{Cat}^k(\mc{X})\hookrightarrow \on{Fun}\big((\BDelta^k)^{op},\mc{X}\big)$$
	is continuous. Since limits in functor $\infty$-categories are detected pointwise (cf. \cite[Corollary~5.1.2.3]{Lurie:2009un}), it follows that \eqref{eq:CSSFunct} is continuous if and only if the composite functors
	\begin{equation}\label{eq:EvalAtNs}\mc{D}\xrightarrow{d\mapsto F(d)_{(n_1,\dots,n_k)}} \mc{X}\end{equation}
	obtained by evaluating at any  $(n_1,\dots,n_k)\in \BDelta^k$
are continuous. This proves the only if part of the statement.

Now we prove the if part of the statement. Let $i:\mbb{Morph}^k\hookrightarrow \BDelta^k$ denote the inclusion of the full subcategory spanned by $(i_1,\dots,i_k)\in \BDelta^k$, where $0\leq i_1,\dots,i_k\leq 1$. Then by assumption, \eqref{eq:CSSMorphFunct} and hence the restricted functor 
$$\mc{D}\xrightarrow{d\mapsto F(d)_{(\bullet,\dots,\bullet)}}\on{Cat}^k(\mc{X})\hookrightarrow \on{Fun}\big((\BDelta^k)^{op},\mc{X}\big)\xrightarrow{i^*} \on{Fun}\big((\mbb{Morph}^k)^{op},\mc{X}\big)$$
is continuous.

A map of simplices $(\phi:[n]\to [m])\in\BDelta$ is said to be \emph{inert} if it is the inclusion of a full sub-interval, i.e. $\phi(i+1)=\phi(i)+1$ for every $i\in [n]$ (cf. \cite{Haugseng:2014vw,Barwick:2013vc}). We let $j:\BDelta_{\on{int}}\hookrightarrow\BDelta$ denote the inclusion of the wide subcategory containing only the inert maps.
For any $(n_1,\dots,n_k)\in \BDelta^k$, let $$\mbb{Spine}(n_1,\dots,n_k)=\mbb{Morph}^k\times_{\BDelta_{\on{int}}^k}\big(\BDelta_{\on{int}}^k\big)_{/(n_1,\dots,n_k)}.$$
Then the Segal conditions imply that for any  $(n_1,\dots,n_k)\in \BDelta^k$, and $d\in \mc{D}$, the object
$F(d)_{(n_1,\dots,n_k)}\in \mc{X}$ is a limit for the composite functor 
$$\mbb{Spine}(n_1,\dots,n_k)^{op}\to (\BDelta^k)^{op}\xrightarrow{F(d)}\mc{X}$$
(cf. \cite[Lemma~2.27]{Haugseng:2014vw}). 
Equivalently, the restriction $F(d)\rvert_{\big(\BDelta_{\on{int}}^k)^{op}}$ is a right Kan extension along $\delta:\mbb{Spine}(n_1,\dots,n_k)^{op}\to (\BDelta_{\on{int}}^k)^{op}$.

Let  $$\on{Fun}\big((\BDelta_{\on{int}}^k)^{op},\mc{X}\big)\xrightarrow{\delta_*} \on{Fun}\big((\mbb{Morph}^k)^{op},\mc{X}\big)$$ denote the right adjoint (the global right Kan extension) to the pullback $\delta^*$, then the composite
$$\mc{D}\xrightarrow{d\mapsto F(d)_{(\bullet,\dots,\bullet)}}\on{Cat}^k(\mc{X})\hookrightarrow \on{Fun}\big((\BDelta^k)^{op},\mc{X}\big)\xrightarrow{i^*} \on{Fun}\big((\mbb{Morph}^k)^{op},\mc{X}\big)\xrightarrow{\delta_*}\on{Fun}\big((\BDelta_{\on{int}}^k)^{op},\mc{X}\big)$$
is continuous. However, by assumption, this functor is equivalent to the restricted functor 
$$\mc{D}\xrightarrow{d\mapsto F(d)_{(\bullet,\dots,\bullet)}}\on{Cat}^k(\mc{X})\hookrightarrow \on{Fun}\big((\BDelta^k)^{op},\mc{X}\big)\xrightarrow{j^*} \on{Fun}\big((\BDelta_{\on{int}}^k)^{op},\mc{X}\big).$$
It follows that each \eqref{eq:EvalAtNs} is continuous, whence \eqref{eq:CSSFunct} is continuous.
	\end{proof}

\subsection{The Sheaf of Complete $k$-Fold Segal Objects}

Given two morphisms of simplicial sets $X\to S$ and $Y\to S$, we  let $Y^X\to S$ denote the simplicial set satisfying the universal property that for any morphism of simplicial sets $K\to S$, commutative diagrams of the form 
$$\begin{tikzpicture}\node (y) at (-2,1) {$K\times_S X$};
\node (q) at (2,1) {$Y$};
\node (css) at (0,0) {$S$};
\draw[->] (y) -- (q);
\draw[->] (y) -- (css);
\draw[->] (q) -- (css);
\end{tikzpicture}$$
correspond to diagrams of the form
 $$\begin{tikzpicture}\node (y) at (-2,1) {$K$};
\node (q) at (2,1) {$Y^X$};
\node (css) at (0,0) {$S$};
\draw[->] (y) -- (q);
\draw[->] (y) -- (css);
\draw[->] (q) -- (css);
\end{tikzpicture}$$
In particular, when $Y\to S$ is a coCartesian fibration, and $X\to S$ is a Cartesian fibration, then $Y^X\to S$ is a coCartesian fibration satisfying
$$\on{Fun}_{S}(K,Y^X)\cong\on{Fun}_S(K\times_S X,Y),$$
(see \cite[Corollary~3.2.2.13]{Lurie:2009un} for more details).

Let $\widehat{\Cat_\infty}$ denote the $\infty$-category of (not necessarily small) $\infty$-categories, and  $\imath:\LTop\hookrightarrow\widehat{\Cat_\infty}$ denote the subcategory consisting of $\infty$-topoi and geometric morphisms (functors which preserve small colimits and finite limits). Notice that $\imath$ factors through the subcategory of presentable $\infty$-categories and left adjoints.
Let $\imath^*\mc{Z}\to \LTop$ denote the (cannonical) presentable fibration classified by $\imath$ (cf. \cite[Proposition~5.5.3.3]{Lurie:2009un}).\footnote{Recall that a fibration is presentable if it is both a Cartesian and a coCartesian fibration each of whose fibres are presentable $\infty$-categories.} We define a presentable fibration
$$
k\textrm{-Simpl}(\LTop):=\imath^*\mc{Z}^{(\LTop\times (\BDelta^k)^{op})}\xrightarrow{p} \LTop, $$
whose fibre over any $\infty$-topos $\mc{X}$ is equivalent to
 $\on{Fun}\big((\BDelta^k)^{op},\mc{X}\big)$
and which associates to any geometric morphism of $\infty$-topoi $f^*:\mc{X}\leftrightarrows\mc{Y}:f_*$ the adjunction given by composition with $f^*$ (resp. $f_*$)
$$(f^*)_!:\on{Fun}\big((\BDelta^k)^{op},\mc{X}\big)\leftrightarrows \on{Fun}\big((\BDelta^k)^{op},\mc{Y}\big):(f_*)_!.$$


Suppose that $X_{\bullet,\dots,\bullet}\in k\textrm{-Simpl}(\LTop)$ is a vertex lying over $\mc{X}=p(X_{\bullet,\dots,\bullet})$. We say that $X_{\bullet,\dots,\bullet}$ is a \emph{complete Segal object} if it lies in the essential image of $\CSS_k(\mc{X})\hookrightarrow p^{-1}(\mc{X})$. We define ${\int\CSS_k}$ to be the full subcategory of $k\textrm{-Simpl}(\LTop)$ spanned by the complete Segal objects.

\begin{Lemma}\label{lem:CSSPres}
	${\int\CSS_k}\to \LTop$ is a presentable fibration.
\end{Lemma}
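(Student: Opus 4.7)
The plan is to exploit the ambient presentable fibration $p : k\textrm{-Simpl}(\LTop) \to \LTop$ that was already constructed, and to verify that the full subcategory $\int\CSS_k$ is closed under both classes of $p$-(co)Cartesian edges and has presentable fibers. Once this closure is in hand, the restriction of $p$ to $\int\CSS_k$ is automatically a Cartesian and coCartesian fibration, and presentability of the fibers upgrades it to a presentable fibration.

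First, I would recall why each fiber $\CSS_k(\mc{X})$ is presentable. Since the $\infty$-topos $\mc{X}$ is presentable, so is $\on{Fun}((\BDelta^k)^{op},\mc{X})$, and the chain of reflective inclusions in \eqref{eq:CSShier}, together with Lurie's (and Rezk's) description of $\CSS_k(\mc{X})$ as an accessible localization at a small set of maps (the $k$-fold Segal maps and the Rezk completeness maps), exhibits $\CSS_k(\mc{X})$ as a presentable reflective localization.

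Next, I would verify the two closure statements. The $p$-coCartesian edges lying over a geometric morphism $f^*\dashv f_* : \mc{X}\leftrightarrows\mc{Y}$ are computed pointwise by $(f^*)_!$, and the Cartesian edges by $(f_*)_!$. Both the $k$-uple and $k$-fold Segal conditions \eqref{eq:kfoldSeg0} and the Rezk completeness condition are assertions that certain natural maps built from finite limits become equivalences. Consequently, any functor $\mc{X}\to\mc{Y}$ which preserves finite limits and equivalences also preserves all three conditions. The functor $f^*$ preserves finite limits by the very definition of a geometric morphism, and $f_*$ preserves all limits as a right adjoint; hence both $(f^*)_!$ and $(f_*)_!$ send complete $k$-fold Segal objects in $\mc{X}$ to complete $k$-fold Segal objects in $\mc{Y}$ (respectively in $\mc{X}$).

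Finally, I would invoke the standard observation that a full subcategory of a (co)Cartesian fibration that is closed under the (co)Cartesian edges inherits a (co)Cartesian fibration structure whose (co)Cartesian edges are those of the ambient fibration. Combined with the presentable fibers verified above, this yields that $\int\CSS_k \to \LTop$ is a presentable fibration. The step I expect to be the main obstacle is the Cartesian closure, i.e.\ showing that $(f_*)_!$ preserves completeness: this would be awkward if the completeness condition were phrased in terms of colimits (which $f_*$ generally does not preserve), so the key input is to rephrase completeness as an equivalence between finite-limit expressions, after which the argument goes through uniformly for both $f^*$ and $f_*$.
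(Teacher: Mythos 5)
Your proof is correct, but it diverges from the paper's at the decisive step. The paper only verifies closure under the \emph{Cartesian} edges of the ambient fibration $k\textrm{-Simpl}(\LTop)\to\LTop$ (i.e.\ that $(f_*)_!$ preserves complete $k$-fold Segal objects, which is exactly what \cite[Proposition~2.20]{Haugseng:2014vw} supplies), and then obtains the coCartesian half abstractly: since each fibre $\CSS_k(\mc{X})$ is presentable and each transition functor $(f_*)_!$ admits a left adjoint $L_{k,\mc{Y}}(f^*)_!$ (again Haugseng), \cite[Proposition~5.5.3.3]{Lurie:2009un} upgrades the Cartesian fibration with fibrewise left adjoints to a presentable fibration. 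You instead prove closure under \emph{both} edge classes, which requires the stronger assertion that $(f^*)_!$ already lands in $\CSS_k(\mc{Y})$ with no completion step. That assertion is true --- for a Segal object, completeness is detected by the finite limit $X_3\times_{X_1\times X_1}(X_0\times X_0)$, via Rezk's reduction of the walking equivalence to $\Delta^3$ with the edges $\{0,2\}$ and $\{1,3\}$ collapsed, and $f^*$ is left exact --- and it buys you something the paper's route does not: the coCartesian edges of $\int\CSS_k$ are then inherited from the ambient fibration, so the localization $L_{k,\mc{Y}}$ appearing in the Remark following the lemma is not actually needed there. The cost is that this is not what the cited Proposition~2.20 states, so you must supply the finite-limit characterization of completeness yourself; note that the paper's definition is phrased via the right adjoint $\on{Gp}$ and essential constancy, which a priori involves an infinite limit over $\BDelta$, so this rephrasing is a genuine (if standard) input. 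One misattribution in your plan: you locate the difficulty in the Cartesian direction, but $(f_*)_!$ is the easy case, since $f_*$ preserves \emph{all} limits and so preserves any limit-shaped condition without rephrasing; the finite-limit characterization is essential only for $(f^*)_!$, which is precisely the direction the paper's argument is structured to avoid.
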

\begin{proof}
	We begin by showing that ${\int\CSS_k}$ is a Cartesian fibration. It suffices to show that for any complete Segal object $Y_{\bullet,\dots,\bullet}\in k\textrm{-Simpl}(\LTop)$ and any $p$-Cartesian morphism $\tilde f:X_{\bullet,\dots,\bullet}\to Y_{\bullet,\dots,\bullet}$, the vertex $X_{\bullet,\dots,\bullet}\in k\textrm{-Simpl}(\LTop)$ is also a complete Segal object. Let $f^*:\mc{X}\to\mc{Y}$ denote the image of $\tilde f$ under $p$, and let $\mc{X}\leftarrow \mc{Y}:f_*$ denote a right adjoint to $f^*$. Then $X_{\bullet,\dots,\bullet}\cong (f_*)_!(Y_{\bullet,\dots,\bullet})$ by the construction of $p$, and the latter is a complete Segal object by \cite[Proposition~2.20]{Haugseng:2014vw}.
	
	Indeed, this shows that ${\int\CSS_k}\to\LTop$ is a Cartesian fibration classifed by a functor 
	$$\chi:\LTop^{op}\to \widehat{\Cat_\infty}$$
	such that 
	\begin{itemize}
		\item for every $\infty$-topos $\mc{X}\in\LTop$, the image $\chi(\mc{X})$ is equivalent to the presentable $\infty$-category $\CSS_k(\mc{X})$, and
		\item  for every geometric morphism $f^*:\mc{X}\leftrightarrows\mc{Y}:f_*$, the functor $$\CSS_k(\mc{X})\leftarrow \CSS_k(\mc{Y}):\chi(f^*)$$ is equivalent to 
	$$\CSS_k(\mc{X})\leftarrow \CSS_k(\mc{Y}):(f_*)_!,$$
	which has a left adjoint (cf. \cite[Proposition~2.20]{Haugseng:2014vw}).
	\end{itemize}

	It follows that ${\int\CSS_k}\to \LTop$ is a presentable fibration (cf. \cite[Proposition~5.5.3.3]{Lurie:2009un}).
	
\end{proof}

\begin{remark}
By construction, the objects $X_{\bullet,\dots,\bullet}$ in ${\int\CSS_k}$ over an $\infty$-topos $\mc{X}$ can be identified with complete Segal objects in $\mc{X}$, and 	morphisms $X_{\bullet,\dots,\bullet}\to Y_{\bullet,\dots,\bullet}$ in ${\int\CSS_k}$ over a geometric morphism $f^*:\mc{X}\leftrightarrows\mc{Y}:f_*$ of $\infty$-topoi  can be identified with either
\begin{itemize}
	\item[(a)] morphisms $X_{\bullet,\dots,\bullet}\to (f_*)_!(Y_{\bullet,\dots,\bullet})$ in $\CSS_k(\mc{X})$, or
	\item[(b)] morphisms $L_{k,\mc{Y}}(f^*)_!(X_{\bullet,\dots,\bullet})\to Y_{\bullet,\dots,\bullet}$ in $\CSS_k(\mc{Y})$,
\end{itemize}
where $L_{k,\mc{Y}}:\on{Seg}_k(\mc{Y})\to \CSS_k(\mc{Y})$ is the localization functor which sends a $k$-fold Segal object in $\mc{Y}$ to its completion. The equivalence between morphisms of types (a) and (b) is given by the adjunction
$$L_{k,\mc{Y}}(f^*)_!:\CSS_k(\mc{X})\leftrightarrows\CSS_k(\mc{Y}):(f_*)_!$$
of \cite[Proposition~2.20]{Haugseng:2014vw}.
\end{remark}

Recall that a geometric morphism $f^*:\mc{X}\rightleftarrows \mc{Y}:f_*$ is said to be \'etale if it admits a factorization 
$$f^*:\mc{X}\underset{f'_*}{\overset{f'^*}{\rightleftarrows}}\mc{X}_{/U}\cong \mc{Y}:f_*$$
for some object $U\in \mc{X}$. We let $\LTop_{\acute{e}t}\subset\LTop$ denote the subcategory spanned by the \'etale geometric morphisms, and we define $\int^{\acute{e}t}\CSS_k\to \LTop_{\acute{e}t}$ to be the presentable fibration fitting into the pullback square:
$$\begin{tikzpicture}
	\mmat{m}{\int^{\acute{e}t}\CSS_k&\int\CSS_k\\
	\LTop_{\acute{e}t}&\LTop\\};
	\draw[->] (m-1-1) --  (m-1-2);
	\draw[->] (m-1-1) --  (m-2-1);
	\draw[->] (m-1-2) --  (m-2-2);
	\draw[->] (m-2-1) -- (m-2-2);
\end{tikzpicture}$$

\begin{remark}
Since $\int\CSS_k\to \LTop$ and $\int^{\acute{e}t}\CSS_k\to \LTop_{\acute{e}t}$ are both presentable fibrations\cite[Corollary~4.3.1.11]{Lurie:2009un}	implies that they both admit all small relative limits and colimits. Since $\LTop$ admits all small limits and colimits (cf. \cite[\S~6.3]{Lurie:2009un}), it follows that $\int\CSS_k$ admits all small limits and colimits, and that the functor $\int\CSS_k\to \LTop$ preserves those limits and colimits (cf. \cite[Lemma~9.8]{Gepner:2015ww}).

Finally, \cite[Theorem~6.3.5.13]{Lurie:2009un} implies that $\LTop_{\acute{e}t}\subset\LTop$ is closed under small limits, which implies that $\int^{\acute{e}t}\CSS_k\subset\int\CSS_k$ is also closed under small limits.
\end{remark}

As explained in \cite[Remark~6.3.5.10]{Lurie:2009un} for any $\infty$-topos $\mc{X}$, the Cartesian fibration $$\on{Fun}(\Delta^1,\mc{X})\to \on{Fun}(\{1\},\mc{X})\cong\mc{X}$$ is classified by a functor 
\begin{subequations}\label{eq:FXtopostoEtale}\begin{equation}\label{eq:EtaleOverX}\mc{X}^{op}\xrightarrow{\begin{array}{rcl}U&\mapsto& \mc{X}_{/U}\\(f:U\to V)&\mapsto &(f^*:\mc{X}_{/V}\leftrightarrows\mc{X}_{/U}:f_*)\end{array}} \LTop_{\acute{e}t},\end{equation}
 which factors as 
\begin{equation}\label{eq:EtaleOverXEquiv}\mc{X}^{op}\xrightarrow{\cong} (\LTop_{\acute{e}t})_{\mc{X}/}\to \LTop_{\acute{e}t},\end{equation}\end{subequations}
where the first functor is an equivalence of categories.

\begin{Definition}[{\cite[Notation~6.3.5.19]{Lurie:2009un}}]
	Given a functor $F:\LTop\to \mc{C}$, let $F_{\mc{X}}:\mc{X}^{op}\to \mc{C}$ denote the composite
	$$\mc{X}^{op}\to \LTop_{\acute{e}t}\subseteq \LTop\xrightarrow{F}\mc{C}.$$
	We say that $F$ is a \emph{sheaf} if for every $\infty$-topos $\mc{X}$, the composite functor $F_{\mc{X}}$ preserves small limits.
\end{Definition}

\begin{Theorem}\label{thm:CSSaSheaf}
The functor \begin{subequations}\begin{equation}\label{eq:CSSetFun}\CSS_k:\LTop_{\acute{e}t}\xrightarrow{\mc{X}\mapsto \CSS_k(\mc{X})}\widehat{\Cat_\infty}\end{equation}
 classifying $\int^{\acute{e}t}\CSS_k\to \LTop_{\acute{e}t}$ preserves small limits. In particular, \begin{equation}\label{eq:CSSFun}\CSS_k:\LTop\xrightarrow{\mc{X}\mapsto \CSS_k(\mc{X})}\widehat{\Cat_\infty}\end{equation}\end{subequations}
 is a sheaf.
\end{Theorem}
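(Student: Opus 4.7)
The plan: the sheaf statement \eqref{eq:CSSFun} follows formally from the limit-preservation statement \eqref{eq:CSSetFun}. Indeed, the composite $\mc{X}^{op}\xrightarrow{\cong}(\LTop_{\acute{e}t})_{\mc{X}/}\to\LTop_{\acute{e}t}$ appearing in \eqref{eq:FXtopostoEtale} preserves small limits: the first arrow is an equivalence, and the second, the projection from a slice $\infty$-category, always preserves limits. Consequently, if $\CSS_k:\LTop_{\acute{e}t}\to\widehat{\Cat_\infty}$ preserves small limits, so does the composite $\mc{X}^{op}\to\widehat{\Cat_\infty}$, which is exactly the sheaf condition.

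To prove \eqref{eq:CSSetFun}, I would first enlarge to the ambient assignment $\Phi:\mc{X}\mapsto\on{Fun}\big((\BDelta^k)^{op},\mc{X}\big)$ and show it preserves small limits in $\LTop_{\acute{e}t}$. By \cite[Theorem~6.3.5.13]{Lurie:2009un}, $\LTop_{\acute{e}t}\subseteq\LTop$ is closed under small limits, and limits in $\LTop$ are computed on underlying $\infty$-categories in $\widehat{\Cat_\infty}$ via the framework of \cite[\S~6.3]{Lurie:2009un}. Since the cotensor $\mc{Y}\mapsto\mc{Y}^{(\BDelta^k)^{op}}$ on $\widehat{\Cat_\infty}$ is a right adjoint to the product with $(\BDelta^k)^{op}$, it preserves small limits. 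Composing, $\Phi$ sends any limit diagram $\mc{X}=\lim_i\mc{X}_i$ in $\LTop_{\acute{e}t}$, with cone geometric morphisms $f_i^*:\mc{X}\to\mc{X}_i$, to an equivalence $\on{Fun}\big((\BDelta^k)^{op},\mc{X}\big)\simeq\lim_i\on{Fun}\big((\BDelta^k)^{op},\mc{X}_i\big)$ in $\widehat{\Cat_\infty}$.

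The remaining task is to identify $\CSS_k(\mc{X})\subseteq\on{Fun}\big((\BDelta^k)^{op},\mc{X}\big)$ under this equivalence with $\lim_i\CSS_k(\mc{X}_i)$. The three defining conditions --- the 1-variable Segal condition \eqref{eq:Segal}, the $k$-fold Segal condition \eqref{eq:kfoldSeg}, and completeness --- each say that certain canonical morphisms in $\mc{X}$ are equivalences. Each $f_i^*$, being a geometric morphism, is left exact, and so preserves the finite limits entering those canonical morphisms; meanwhile, the family $\{f_i^*\}_i$ is jointly conservative on $\mc{X}\simeq\lim_i\mc{X}_i$ (these being the cone projections of a limit in $\widehat{\Cat_\infty}$). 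Hence each defining condition on $X_\bullet$ is equivalent to the corresponding condition on every $f_i^*X_\bullet$. Combining this full-subcategory characterization with Theorem~\ref{thm:LimInLim} --- applied to identify $\lim_i\CSS_k(\mc{X}_i)$ inside $\lim_i\on{Fun}((\BDelta^k)^{op},\mc{X}_i)$ --- yields $\CSS_k(\mc{X})\simeq\lim_i\CSS_k(\mc{X}_i)$, which is \eqref{eq:CSSetFun}.

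The main obstacle I anticipate is the completeness condition. While the Segal conditions are finite-limit conditions and are thus transparently preserved by left-exact $f_i^*$, completeness is defined via the right adjoint $\on{Gp}:\on{Cat}(\mc{X})\to\on{Gpd}(\mc{X})$, whose compatibility with the left-exact $f_i^*$ is less obvious. The key input here is Haugseng's \cite[Proposition~2.20]{Haugseng:2014vw}, which guarantees that the pushforward $(f_*)_!$ along a geometric morphism preserves completeness, with $L_{k,\mc{Y}}(f^*)_!$ as its left adjoint. This compatibility is precisely what allows one to transport the completeness condition through the cone projections and so close out the argument.
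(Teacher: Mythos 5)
Your overall architecture matches the paper's: deduce the sheaf statement from continuity on $\LTop_{\acute{e}t}$ via the factorization \eqref{eq:FXtopostoEtale} through the coslice $(\LTop_{\acute{e}t})_{\mc{X}/}$; show the ambient functor $\mc{X}\mapsto\on{Fun}\big((\BDelta^k)^{op},\mc{X}\big)$ is continuous; then identify $\CSS_k(\mc{X})$ and $\lim_i\CSS_k(\mc{X}_i)$ as the same full subcategory of the ambient limit by checking the defining conditions componentwise. Your treatment of the Segal and constancy conditions is sound: they are finite-limit conditions, the \'etale cone maps $f_i^*$ preserve them (indeed each $f_i^*$ has a left adjoint $f_{i!}$, so it preserves all limits), and the family $\{f_i^*\}$ is jointly conservative on the limit; this is a repackaging of the paper's appeal to Theorem~\ref{thm:LimInLim}.

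The gap is in the completeness condition, which you correctly single out as the main obstacle but then resolve with the wrong half of Haugseng's Proposition~2.20. What you cite --- that the pushforward $(f_*)_!$ preserves completeness, with $L_{k,\mc{Y}}(f^*)_!$ as its left adjoint --- concerns the direct-image direction and gives neither of the two facts you actually need about the cone projections: that $(f_i^*)_!$ \emph{preserves} completeness (false for a general geometric morphism, which is exactly why the left adjoint carries the localization $L_{k,\mc{Y}}$), and that the family $\{(f_i^*)_!\}$ jointly \emph{detects} it. Both facts are special to \'etale morphisms: since $f_i^*$ sits in an adjoint triple $f_{i!}\dashv f_i^*\dashv f_{i*}$ with $f_{i!}$ preserving pullbacks, Proposition~2.20 applied to the pseudo-geometric morphism $(f_{i!},f_i^*)$ shows that $(f_i^*)_!$ commutes with the underlying-groupoid functor $\on{Gp}$; combined with joint conservativity (essential constancy of $\on{Gp}X_\bullet$ is detected componentwise) this closes the argument. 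The paper additionally invokes Theorem~\ref{thm:limadj} to identify $\on{Gp}$ on the limit topos with the fibrewise underlying-groupoid functors; you need either that step or the commutation applied directly to the cone maps. As written, your completeness step does not go through.
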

\begin{proof}
By definition, for any \'etale geometric morphism  $f^*:\mc{X}\rightleftarrows \mc{X}_{/U}:f_*$, the canonical projection $f_!:\mc{X}_{/U}\to\mc{X}$ forms part of an adjoint triple, 
$$(f_!\dashv f^*\dashv f_*):\mc{X}_{/U}\underset{\xrightarrow{f_*}}{\overset{\xrightarrow{f_!}}{\xleftarrow{f^*}}}\mc{X}.$$
Moreover, the forgetful functor from the over category $f_!:\mc{X}_{/U}\to \mc{X}$ preserves pullbacks, so that $f_!:\mc{X}_{/U}\leftrightarrows\mc{X}:f^*$ is a \emph{pseudo-geometric morphism} (cf. \cite{Haugseng:2014vw}). In particular, $(f^*)_!:\on{Fun}\big((\BDelta^k)^{op},\mc{X})\to \on{Fun}\big((\BDelta^k)^{op},\mc{X}_{/U})$ preserves complete $k$-fold Segal objects (cf. \cite[Proposition~2.20]{Haugseng:2014vw}); and consequently the inclusion 
$$\begin{tikzpicture}
	\node (m-1-1) at (-1.5,1.5) {$\int^{\acute{e}t}\CSS_k$};
	\node (m-1-2) at (1.5,1.5) {$k\textrm{-Simpl}(\LTop_{\acute{e}t})$};
	\node (m-2-2) at (0,0) {$\LTop_{\acute{e}t}$};
	\draw[->] (m-1-1) --  (m-1-2);
	\draw[->] (m-1-1) --  (m-2-2);
	\draw[->] (m-1-2) --  (m-2-2);
\end{tikzpicture}$$
preserves both Cartesian and coCartesian edges. Therefore, 
\begin{itemize}
\item 	$\CSS_k:\LTop_{\acute{e}t}\to \widehat{\Cat_\infty}$ is a (fully faithful) subfunctor of the composite functor
$$k\textrm{-Simpl}':\LTop_{\acute{e}t}\hookrightarrow\widehat{\Cat_\infty}\xrightarrow{\mc{C}\mapsto \on{Fun}((\BDelta^k)^{op},\mc{C})}\widehat{\Cat_\infty},$$ classifying $k\textrm{-Simpl}(\LTop_{\acute{e}t})\to \LTop_{\acute{e}t}$, and
\item this latter functor is continuous (cf. \cite[Proposition~6.3.2.3, Theorem~6.3.5.13]{Lurie:2009un}).
\end{itemize}
We will leverage these facts to show that $\CSS_k\rvert_{\LTop_{\acute{e}t}}$ is continuous. For simplicity of exposition, we restrict to the case that $k=1$.

Suppose that $q:I\to \LTop_{\acute{e}t}$ is a diagram. Then we may identify the limit of $1\textrm{-Simpl}'\circ q$ with the $\infty$-category 
$$\lim \big(1\textrm{-Simpl}'\circ q\big)\subset \on{Fun}_I\bigg(I, q^*\big(1\textrm{-Simpl}(\LTop_{\acute{e}t})\big)\bigg)$$
 of coCartesian sections of the pulled-back presentable fibration $q^*\big(1\textrm{-Simpl}(\LTop_{\acute{e}t})\big)\to I$. Similarly, we may identify the  limit of $\CSS_1\circ q$ with the $\infty$-category 
$$\lim \big(\CSS_1\circ q\big)\subset \on{Fun}_I\bigg(I, q^*\big(\int^{\acute{e}t}\CSS_1\big)\bigg)$$
of coCartesian sections of the pulled-back presentable fibration $q^*\big(\int^{\acute{e}t}\CSS_1)\big)\to I$. 

Now let $\mc{X}\cong\lim q\in \LTop_{\acute{e}t}$ be the limit of $q$. Then $\CSS_1(\mc{X})$ is the accessible localization of $\on{Fun}\big((\BDelta)^{op},\mc{X})\cong\lim \big(1\textrm{-Simpl}'\circ q\big)$ spanned by those objects which satisfy
\begin{enumerate}
	\item the Segal conditions \eqref{eq:Segal} which specify the category objects,\footnote{when $k> 1$, one also has constancy conditions \eqref{eq:kfoldSeg0}, which are likewise given as limits.} and 
	\item the completeness conditions; namely (in the case that $k=1$) that $\on{Gp} X_\bullet:\BDelta^{op}\to \mc{X}$ is equivalent to the constant functor. 
\end{enumerate} 
	So we have full and faithful inclusions of both $\CSS_1(\mc{X})$ and $\lim(\CSS_1\circ q)$ into the $\infty$-category, $\on{Fun}\big((\BDelta)^{op},\mc{X})$, of co-Cartesian sections of $q^*\big(1\textrm{-Simpl}(\LTop_{\acute{e}t})\big)\to I$. Using the universal property for the limit yields a diagram of full and faithful inclusions:
	$$\CSS_1(\mc{X})\hookrightarrow\lim(\CSS_1\circ q)\hookrightarrow\on{Fun}\big((\BDelta)^{op},\mc{X}).$$
Thus, it suffices to show that any coCartesian section of $q^*\big(\int^{\acute{e}t}\CSS_1)\big)\to I$ lies in the essential image of the leftmost functor - i.e. satisfies conditions (1) and (2).
 As a first step, notice that 
 $q^*\big(\int^{\acute{e}t}\CSS_1)\big)\to I$ 
 satisfies conditions (1) and (2) fibrewise.

For every $i\in I$, let $\pi_i^*:\mc{X}\leftrightarrows q(i):{\pi_i}_*$ denote the \'etale geometric morphism fitting into the limit cone. Recall that left adjoints of \'etale geometric morphisms $f^*:\mc{Y}\to \mc{Z}$ are continuous. Now, since the conditions for a simplicial object $X_{\bullet}\in \on{Fun}\big((\BDelta)^{op},\mc{X})$ to be a category object are given in terms of limits, Theorem~\ref{thm:LimInLim} implies that $X_{\bullet}$ is a category object if and only of each of the simplicial objects $(\pi_i^*)_!\big(X_{\bullet}\big)$ are category objects. 

Next, \cite[Proposition~2.20]{Haugseng:2014vw} implies that left adjoints of \'etale geometric morphisms $f^*:\mc{Y}\to \mc{Z}$ commute with the \emph{underlying groupoid} functors, i.e. 
$$\begin{tikzpicture}
	\mmat{m}{\on{Seg}(\mc{Y})&\on{Seg}(\mc{Z})\\
	\on{Gpd}(\mc{Y})&\on{Gpd}(\mc{Z})\\};
	\draw[->] (m-1-1) -- node {$(f^*)_!$} (m-1-2);
	\draw[->] (m-1-1) -- node[swap] {$\on{Gp}$}  (m-2-1);
	\draw[->] (m-1-2) -- node {$\on{Gp}$} (m-2-2);
	\draw[->] (m-2-1) -- node[swap] {$(f^*)_!$} (m-2-2);
\end{tikzpicture}$$
commutes. Suppose now that $X_{\bullet}\in\on{Seg}(\mc{X})$ is a category object, which we may identify with a coCartesian section $X_\bullet':I\to q^*\big(1\textrm{-Simpl}(\LTop_{\acute{e}t})\big)$.
Applying Theorem~\ref{thm:limadj}, we see that the underlying groupoid $\on{Gp}X_{\bullet}$ can be identified with the coCartesian section $\on{Gp}\circ X_\bullet':I\to q^*\big(1\textrm{-Simpl}(\LTop_{\acute{e}t})\big)$
obtained by applying the underlying groupoid functor fibrewise.\footnote{See \cite[Proposition~7.3.2.6]{Lurie:0uBkkKfz} to confirm that this right adjoint can be applied fibrewise in a coherent manner.} Consequently, for every $i\in I$ we have $$(\pi_i^*)_!\on{Gp}X_{\bullet}\cong \on{Gp}\big((\pi_i^*)_!X_{\bullet}\big).$$ Therefore $$\on{Gp}X_{\bullet}:\BDelta^{op}\to \mc{X}$$ is essentially constant if and only if each $$\on{Gp}\big((\pi_i^*)_!X_{\bullet}\big):\BDelta^{op}\to q(i)$$ is essentially constant.



Thus, we have shown $\CSS_1(\mc{X})\hookrightarrow\lim \big(\CSS_1\circ q\big)$ is an equivalence, which proves that \eqref{eq:CSSetFun} preserves small limits. 

Now for any $\infty$-topos $\mc{X}$, the functor $\mc{X}^{op}\to \LTop_{\acute{e}t}$ given by \eqref{eq:EtaleOverX} factors as an equivalence followed by the forgetful functor from an undercategory \eqref{eq:EtaleOverXEquiv}; hence it preserves small limits (cf. \cite[Proposition~1.2.13.8]{Lurie:2009un}). It follows that the composite $$\mc{X}^{op}\to \LTop_{\acute{e}t}\xrightarrow{\CSS_k}\widehat{\Cat_\infty}$$ also preserves small limits, so \eqref{eq:CSSFun} is a sheaf.
\end{proof}

 \section{$\infty$-categories of spans.}
Let $\mc{C}$ be an $\infty$-category with pullbacks. In \cite{Barwick:2013th}, Barwick introduces the $\infty$-category $\on{Span}(\mc{C})$, which has the same space of objects as $\mc{C}$, but whose morphisms between two objects $c_0,c_1\in \mc{C}$ is the space of diagrams in $\mc{C}$ of the form
$$\begin{tikzpicture}
 \node  (m-1-1)  at (0,1) {$x$};
 \node  (m-1-2)  at (1,0) {$c_0$};
 \node  (m-2-1)  at (-1,0) {$c_1$};
\draw[->] (m-1-1) -- (m-1-2);
\draw[->] (m-1-1) -- (m-2-1);
\end{tikzpicture}$$
That is, spans $c_0\nrightarrow c_1$ in $\mc{C}$.
Composition of two such morphisms is given by taking the fibred product. 
Haugseng \cite{Haugseng:2014vw} extends this construction, introducing an $(\infty,k)$-category $\on{Span}_k(\mc{C})$ of iterated spans in $\mc{C}$, whose 2-morphisms are spans between spans, and so forth. In this section, we show that the functor $\mc{C}\to \on{Span}_k(\mc{C})$ depends continuously on $\mc{C}$. 
\subsection{Continuity of the formation of $\infty$-categories of iterated spans.}

We now briefly recall Haugseng's construction. Let $\BSigma^n$ denote the partially ordered set whose objects are pairs of numbers $(i,j)$ such that $0\leq i\leq j\leq n$, and $(i,j)\leq (i',j')$ if $i\leq i'$ and $j'\leq j$. We may picture the poset $\BSigma^n$ (using Barwick's notation $\bar p=n-p$) as follows:
\begin{center}
\begin{tikzpicture}
	\path [use as bounding box] (-5.5,0.5) rectangle (5.5,6.5);
	\begin{pgfonlayer}{nodelayer}
		\node [style=hidden] (0) at (0, 6) {$0\bar 0$};
		\node [style=hidden] (1) at (-1, 5) {$0\bar1$};
		\node [style=hidden] (2) at (1, 5) {$1\bar 0$};
		\node [style=hidden] (3) at (0, 4) {};
		\node [style=hidden] (4) at (-2, 4) {};
		\node [style=hidden] (5) at (2, 4) {};
		\node [style=hidden] (6) at (1, 3) {$\overline{31}$};
		\node [style=hidden] (7) at (-1, 3) {$13$};
		\node [style=hidden] (8) at (-3, 3) {$02$};
		\node [style=hidden] (9) at (3, 3) {$\overline{20}$};
		\node [style=hidden] (10) at (-4, 2) {$01$};
		\node [style=hidden] (11) at (-5, 1) {$00$};
		\node [style=hidden] (12) at (-3, 1) {$11$};
		\node [style=hidden] (13) at (-2, 2) {$12$};
		\node [style=hidden] (14) at (-1, 1) {$22$};
		\node [style=hidden] (15) at (0, 2) {};
		\node [style=hidden] (16) at (1, 1) {$\overline{22}$};
		\node [style=hidden] (17) at (2, 2) {$\overline{21}$};
		\node [style=hidden] (18) at (3, 1) {$\overline{11}$};
		\node [style=hidden] (19) at (4, 2) {$\overline{10}$};
		\node [style=hidden] (20) at (5, 1) {$\overline{00}$};
	\end{pgfonlayer}
	\begin{pgfonlayer}{edgelayer}
		\draw [style=arrow] (10) to (12);
		\draw [style=arrow] (10) to (11);
		\draw [style=arrow] (8) to (10);
		\draw [style=arrow] (8) to (13);
		\draw [style=arrow] (13) to (12);
		\draw [style=arrow] (13) to (14);
		\draw [style=arrow] (4) to (8);
		\draw [style=arrow] (1) to (4);
		\draw [style=arrow] (0) to (1);
		\draw [style=arrow] (0) to (2);
		\draw [style=arrow] (1) to (3);
		\draw [style=arrow] (2) to (3);
		\draw [style=arrow] (3) to (7);
		\draw [style=arrow] (7) to (13);
		\draw [style=arrow] (7) to (15);
		\draw [style=arrow] (15) to (14);
		\draw [style=arrow] (6) to (15);
		\draw [style=arrow] (3) to (6);
		\draw [style=arrow] (2) to (5);
		\draw [style=arrow] (5) to (6);
		\draw [style=arrow] (5) to (9);
		\draw [style=arrow] (9) to (17);
		\draw [style=arrow] (6) to (17);
		\draw [style=arrow] (15) to (16);
		\draw [style=arrow] (17) to (16);
		\draw [style=arrow] (17) to (18);
		\draw [style=arrow] (9) to (19);
		\draw [style=arrow] (19) to (18);
		\draw [style=arrow] (19) to (20);
		\draw [style=arrow] (4) to (7);
	\end{pgfonlayer}
\draw[dotted,-] (1) -- (6);
\draw[dotted,-] (2) -- (7);
\draw[dotted,-] (7) -- (16);
\draw[dotted,-] (6) -- (14);
\draw[dotted,-] (1) -- (8);
\draw[dotted,-] (2) -- (9);
\end{tikzpicture}
\end{center}
For any map of totally ordered sets $\phi:[n]\to [m]$, the map $(i,j)\to \big(\phi(i),\phi(j)\big)$ induces a monotone map $\BSigma^n\to\BSigma^m$; and thus we have a functor $\BSigma^\bullet:\BDelta\to\Cat_\infty$. Similarly, taking $k$-fold product, $\BSigma^{n_1,\dots,n_k}:=\BSigma^{n_1}\times\cdots\times\BSigma^{n_k}$ defines a functor \begin{equation}\label{eq:BSigFunct}\BSigma^{\bullet,\dots,\bullet}:\BDelta^k\to\Cat_\infty.\end{equation}

Suppose that $\mc{C}$ is an $\infty$-category with finite limits. We will be interested in functors $f:\BSigma^{n_1,\dots,n_k}\to \mc{C}$. 
We let $\BLambda^k\subseteq \BSigma^k$ denote the full subcategory
\begin{center}
\begin{tikzpicture}
	\path [use as bounding box] (-5.5,0.5) rectangle (5.5,2.5);
	\begin{pgfonlayer}{nodelayer}
		\node [style=hidden] (0) at (-4, 2) {${01}$};
		\node [style=hidden] (1) at (-5, 1) {${00}$};
		\node [style=hidden] (2) at (-3, 1) {${11}$};
		\node [style=hidden] (3) at (-2, 2) {${12}$};
		\node [style=hidden] (4) at (-1, 1) {${22}$};
		\node [style=hidden] (5) at (0, 2) {$\dots$};
		\node [style=hidden] (6) at (1, 1) {${\overline{22}}$};
		\node [style=hidden] (7) at (2, 2) {${\overline{21}}$};
		\node [style=hidden] (8) at (3, 1) {${\overline{11}}$};
		\node [style=hidden] (9) at (4, 2) {${\overline{10}}$};
		\node [style=hidden] (10) at (5, 1) {${\overline{00}}$};
	\end{pgfonlayer}
	\begin{pgfonlayer}{edgelayer}
		\draw [style=arrow] (0) to (2);
		\draw [style=arrow] (0) to (1);
		\draw [style=arrow] (3) to (2);
		\draw [style=arrow] (3) to (4);
		\draw [style=arrow] (5) to (4);
		\draw [style=arrow] (5) to (6);
		\draw [style=arrow] (7) to (6);
		\draw [style=arrow] (7) to (8);
		\draw [style=arrow] (9) to (8);
		\draw [style=arrow] (9) to (10);
	\end{pgfonlayer}
\end{tikzpicture}
\end{center}
 spanned by those pairs $(i,j)$ with $j-i\leq1$. Similarly, we define $\BLambda^{n_1,\dots,n_k}:=\BLambda^{n_1}\times\cdots\times\BLambda^{n_k}$, and let $\iota_{n_1,\dots,n_k}:\BLambda^{n_1,\dots,n_k}\to \BSigma^{n_1,\dots,n_k}$ denote the inclusion. 
 \begin{Definition}[\cite{Barwick:2013th,Haugseng:2014vw}]\label{def:CartFunct}
We say that a functor $f:\BSigma^{n_1,\dots,n_k}\to \mc{C}$ is \emph{Cartesian} if it is a right Kan extension of $f\circ \iota_{n_1,\dots,n_k}$, and we let $\on{Fun}^{\BSigma\textrm{-Cart}}\big(\BSigma^{n_1,\dots,n_k},\mc{C}\big)\subseteq  \on{Fun}\big(\BSigma^{n_1,\dots,n_k},\mc{C}\big)$ denote the full subcategory spanned by the Cartesian functors. We let $$\on{Map}^{\BSigma\textrm{-Cart}}\big(\BSigma^{n_1,\dots,n_k},\mc{C}\big):=\iota\on{Fun}^{\BSigma\textrm{-Cart}}\big(\BSigma^{n_1,\dots,n_k},\mc{C}\big)\subset \on{Fun}^{Cart}\big(\BSigma^{n_1,\dots,n_k},\mc{C}\big)$$ denote the classifying space of Cartesian functors.\footnote{i.e. the largest Kan complex in $\on{Fun}^{\BSigma\textrm{-Cart}}\big(\BSigma^{n_1,\dots,n_k},\mc{C}\big)$.}
 \end{Definition}

For example, when $k=1$, a Cartesian functor $f:\BSigma^{n_1,\dots,n_k}\to \mc{C}$ is a diagram of the form
\begin{center}
\begin{tikzpicture}
	\path [use as bounding box] (-5.5,0.5) rectangle (5.5,6.5);
	\begin{pgfonlayer}{nodelayer}
		\node [style=hidden] (0) at (0, 6) {$c_{0\bar 0}$};
		\node [style=hidden] (1) at (-1, 5) {$c_{0\bar1}$};
		\node [style=hidden] (2) at (1, 5) {$c_{1\bar 0}$};
		\node [style=hidden] (3) at (0, 4) {};
		\node [style=hidden] (4) at (-2, 4) {};
		\node [style=hidden] (5) at (2, 4) {};
		\node [style=hidden] (6) at (1, 3) {$c_{\overline{31}}$};
		\node [style=hidden] (7) at (-1, 3) {$c_{13}$};
		\node [style=hidden] (8) at (-3, 3) {$c_{02}$};
		\node [style=hidden] (9) at (3, 3) {$c_{\overline{20}}$};
		\node [style=hidden] (10) at (-4, 2) {$c_{01}$};
		\node [style=hidden] (11) at (-5, 1) {$c_{00}$};
		\node [style=hidden] (12) at (-3, 1) {$c_{11}$};
		\node [style=hidden] (13) at (-2, 2) {$c_{12}$};
		\node [style=hidden] (14) at (-1, 1) {$c_{22}$};
		\node [style=hidden] (15) at (0, 2) {};
		\node [style=hidden] (16) at (1, 1) {$c_{\overline{22}}$};
		\node [style=hidden] (17) at (2, 2) {$c_{\overline{21}}$};
		\node [style=hidden] (18) at (3, 1) {$c_{\overline{11}}$};
		\node [style=hidden] (19) at (4, 2) {$c_{\overline{10}}$};
		\node [style=hidden] (20) at (5, 1) {$c_{\overline{00}}$};
		\node [style=none] (21) at (-3, 1.5) {$\ccorner$};
		\node [style=none] (22) at (-1, 1.5) {$\ccorner$};
		\node [style=none] (23) at (1, 1.5) {$\ccorner$};
		\node [style=none] (24) at (3, 1.5) {$\ccorner$};
		\node [style=none] (25) at (2, 2.5) {$\ccorner$};
		\node [style=none] (26) at (-2, 2.5) {$\ccorner$};
		\node [style=none] (27) at (1, 3.5) {$\ccorner$};
		\node [style=none] (28) at (-1, 3.5) {$\ccorner$};
		\node [style=none] (29) at (0, 2.5) {$\ccorner$};
		\node [style=none] (30) at (0, 4.5) {$\ccorner$};
	\end{pgfonlayer}
	\begin{pgfonlayer}{edgelayer}
		\draw [style=arrow] (10) to (12);
		\draw [style=arrow] (10) to (11);
		\draw [style=arrow] (8) to (10);
		\draw [style=arrow] (8) to (13);
		\draw [style=arrow] (13) to (12);
		\draw [style=arrow] (13) to (14);
		\draw [style=arrow] (4) to (8);
		\draw [style=arrow] (1) to (4);
		\draw [style=arrow] (0) to (1);
		\draw [style=arrow] (0) to (2);
		\draw [style=arrow] (1) to (3);
		\draw [style=arrow] (2) to (3);
		\draw [style=arrow] (3) to (7);
		\draw [style=arrow] (7) to (13);
		\draw [style=arrow] (7) to (15);
		\draw [style=arrow] (15) to (14);
		\draw [style=arrow] (6) to (15);
		\draw [style=arrow] (3) to (6);
		\draw [style=arrow] (2) to (5);
		\draw [style=arrow] (5) to (6);
		\draw [style=arrow] (5) to (9);
		\draw [style=arrow] (9) to (17);
		\draw [style=arrow] (6) to (17);
		\draw [style=arrow] (15) to (16);
		\draw [style=arrow] (17) to (16);
		\draw [style=arrow] (17) to (18);
		\draw [style=arrow] (9) to (19);
		\draw [style=arrow] (19) to (18);
		\draw [style=arrow] (19) to (20);
		\draw [style=arrow] (4) to (7);
	\end{pgfonlayer}
\draw[dotted,-] (1) -- (6);
\draw[dotted,-] (2) -- (7);
\draw[dotted,-] (7) -- (16);
\draw[dotted,-] (6) -- (14);
\draw[dotted,-] (1) -- (8);
\draw[dotted,-] (2) -- (9);
\end{tikzpicture}
\end{center}
where each square is a pullback in $\mc{C}$. Such a diagram is to be understood as a composable sequence of spans $$c_{00}\overset{c_{01}}{\nrightarrow}c_{11}\overset{c_{12}}{\nrightarrow}c_{22}\nrightarrow\cdots\nrightarrow c_{nn}$$
where for $i<j<k$ each $c_{ik}$ is the composite (fibre product) of $c_{ii}\overset{c_{ij}}{\nrightarrow}c_{jj}\overset{c_{jk}}{\nrightarrow}c_{kk}$.

Recall that $\Cat_\infty$ is a Cartesian closed $\infty$-category, in particular, there is an internal mapping object bi-functor (cf. \cite[Remark~4.2.1.31]{Lurie:0uBkkKfz}),
\begin{equation}\label{eq:2Yon}\Cat_\infty^{op}\times\Cat_\infty\xrightarrow{(\mc{D},\mc{C})\mapsto \on{Fun}(\mc{D},\mc{C})}\Cat_\infty,\end{equation}
which is separately continuous in either variable. Composing with $\BSigma^{\bullet,\dots,\bullet}:\BDelta^k\to\Cat_\infty$ yields a functor  
$(\BDelta^k)^{op}\times\Cat_\infty\to\Cat_\infty,$ or equivalently, a functor
\begin{subequations}
\begin{equation}\label{eq:barSPAN+}
\begin{split}
\overline{\on{SPAN}}_k^+:\Cat_\infty&\to\on{Fun}\big((\BDelta^k)^{op},\Cat_\infty\big),\\
\mc{C}&\to \bigg[(n_1,\dots,n_k)\to \on{Fun}\big(\BSigma^{n_1,\dots,n_k},\mc{C}\big)\bigg]
\end{split}
\end{equation}
which is continuous (by \cite[Corollary~5.1.2.3]{Lurie:2009un} and the continuity of \eqref{eq:2Yon} in the second variable).


Let $\Cat_\infty^{\on{lex}}\subset \Cat_\infty$ consist of those $\infty$-categories with finite limits and functors preserving finite limits. Suppose that $\mc{C}\in\Cat_\infty^{\on{lex}}$ has finite limits, and $f:\BSigma^{n_1\dots,n_k}\to \mc{C}$ is Cartesian (in the sense of Definition~\ref{def:CartFunct}); then for  any finite limit preserving functor $F:\mc{C}\to\mc{D}$, the composite $F\circ f:\BSigma^{n_1\dots,n_k}\to \mc{D}$ is also Cartesian.
 Therefore, following \cite{Haugseng:2014vw}, we may define
\begin{equation}\label{eq:SPAN+}
\begin{split}
\on{SPAN}_k^+:\Cat_\infty^{\on{lex}}&\to\on{Fun}\big((\BDelta^k)^{op},\Cat_\infty\big),\\
\mc{C}&\to \bigg[(n_1,\dots,n_k)\to \on{Fun}^{\BSigma\textrm{-Cart}}\big(\BSigma^{n_1,\dots,n_k},\mc{C}\big)\bigg]
\end{split}
\end{equation}
\end{subequations}
to be the subfunctor 
 of $\overline{\on{SPAN}}_k^+\rvert_{\Cat_\infty^{\on{lex}}}$ which assigns to each $\mc{C}\in\Cat_\infty^{\on{lex}}$ and each $(n_1,\dots,n_k)\in\BDelta^k$ the full subcategory
spanned by the Cartesian functors $\BSigma^{n_1\dots,n_k}\to \mc{C}$. As explained in \cite{Haugseng:2014vw} the functor \eqref{eq:SPAN+} takes values in $k$-uple category objects (see also \cite{Barwick:2013th}).

\begin{Lemma}\label{lem:SPAN+Cont}
	The functor
	$$\on{SPAN}_k^+:\Cat_\infty^{\on{lex}}\to\on{Cat}^k(\Cat_\infty)$$
	is continuous.
\end{Lemma}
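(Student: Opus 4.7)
The plan is to reduce continuity of $\on{SPAN}_k^+$ to a pointwise continuity statement on $(\BDelta^k)^{op}$, then show that Cartesianness of a functor $\BSigma^{n_1,\dots,n_k}\to\mc{C}$ into a limit $\infty$-category is detected by the projections, which is precisely the content of Theorem~\ref{thm:LimInLim}(B).

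First I would observe that by Theorem~\ref{thm:LimInLim} the subcategory $\Cat_\infty^{\on{lex}}\subseteq\Cat_\infty$ is closed under small limits: the properties of admitting finite limits and of preserving them being hereditary gives both that $\mc{C}:=\lim_{\alpha\in I} p(\alpha)$ has finite limits for any diagram $p:I\to \Cat_\infty^{\on{lex}}$, and that the projections $\pi_\alpha:\mc{C}\to p(\alpha)$ are lex and jointly detect finite limits. Since $\on{Cat}^k(\Cat_\infty)$ is a reflective localization of $\on{Fun}((\BDelta^k)^{op},\Cat_\infty)$ (cf. \cite{Lurie:2009uz}), small limits in the target are computed in the ambient functor category, hence pointwise at each $(n_1,\dots,n_k)\in\BDelta^k$. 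It therefore suffices to show, for each such tuple, that the functor
$$\Cat_\infty^{\on{lex}}\;\longrightarrow\;\Cat_\infty,\qquad \mc{C}\;\longmapsto\; \on{Fun}^{\BSigma\text{-Cart}}(\BSigma^{n_1,\dots,n_k},\mc{C})$$
preserves small limits.

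The ambient functor $\mc{C}\mapsto \on{Fun}(\BSigma^{n_1,\dots,n_k},\mc{C})$ is already continuous on all of $\Cat_\infty$ by the continuity of $\overline{\on{SPAN}}_k^+$ established above. Under the resulting equivalence $\on{Fun}(\BSigma^{n_1,\dots,n_k},\mc{C})\cong \lim_{\alpha\in I}\on{Fun}(\BSigma^{n_1,\dots,n_k},p(\alpha))$, a vertex $f$ of the left-hand side corresponds to a coherent family $f_\alpha:=\pi_\alpha\circ f$ of functors $\BSigma^{n_1,\dots,n_k}\to p(\alpha)$. The task then reduces to verifying: \emph{$f$ is Cartesian if and only if every $f_\alpha$ is Cartesian.} The forward implication is immediate because each $\pi_\alpha$ is lex and Cartesianness is a pointwise condition demanding certain finite-limit cones. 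For the converse, Cartesianness of $f$ requires that for every $(i,j)\in\BSigma^{n_1,\dots,n_k}$ with $j-i>1$ the value $f(i,j)$ be a specific finite limit of $f|_{\BLambda^{n_1,\dots,n_k}}$ over the relative slice $\BLambda^{n_1,\dots,n_k}\times_{\BSigma^{n_1,\dots,n_k}}\BSigma^{n_1,\dots,n_k}_{(i,j)/}$, and Theorem~\ref{thm:LimInLim}(B) tells us this limit cone in $\mc{C}$ is detected by the projections $\pi_\alpha$. Equivalently, one may read off the same conclusion from Corollary~\ref{cor:KanClosed} applied to $\delta=\iota_{n_1,\dots,n_k}$, which says that admitting and preserving right Kan extensions along $\iota_{n_1,\dots,n_k}$ is hereditary.

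I expect the only real obstacle to be this last converse — promoting pointwise Cartesianness in each $p(\alpha)$ to Cartesianness in $\mc{C}$ — but Theorem~\ref{thm:LimInLim}(B) was designed to produce exactly this, so the argument is essentially by direct invocation. Once it is in place, the full subcategories of Cartesian functors match up under the pointwise equivalence, yielding
$$\on{Fun}^{\BSigma\text{-Cart}}(\BSigma^{n_1,\dots,n_k},\mc{C})\;\cong\;\lim_{\alpha\in I}\on{Fun}^{\BSigma\text{-Cart}}(\BSigma^{n_1,\dots,n_k},p(\alpha)),$$
natural in the tuple $(n_1,\dots,n_k)$, which together with the pointwise detection of limits in $\on{Cat}^k(\Cat_\infty)$ completes the proof.
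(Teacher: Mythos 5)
Your proof is correct, but it takes a genuinely different route from the paper's. The paper invokes Lemma~\ref{lem:ContOfCatObj} to reduce continuity to the evaluations at tuples $(i_1,\dots,i_k)$ with $0\leq i_1,\dots,i_k\leq 1$, and then observes that for those tuples one has $\BLambda^{i_1,\dots,i_k}=\BSigma^{i_1,\dots,i_k}$, so the Cartesianness condition is vacuous and $i^*\circ\on{SPAN}_k^+$ coincides with $i^*\circ\overline{\on{SPAN}}_k^+$, which is continuous for purely formal reasons (Cartesian closedness of $\Cat_\infty$ and pointwise limits in functor categories). In other words, the paper never has to check that Cartesianness is detected in a limit $\infty$-category: the Segal conditions let it dodge that question entirely. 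You instead use only the elementary direction of the pointwise reduction (limits in a limit-closed full subcategory of a functor category are computed objectwise) and then verify the detection statement head-on at \emph{every} tuple $(n_1,\dots,n_k)$, using Theorem~\ref{thm:LimInLim}(B) (equivalently Corollary~\ref{cor:KanClosed}) to see that the pointwise right-Kan-extension condition over the finite comma posets $\BLambda^{n_1,\dots,n_k}\times_{\BSigma^{n_1,\dots,n_k}}\BSigma^{n_1,\dots,n_k}_{(i,j)/}$ holds in $\lim_{\alpha} p(\alpha)$ if and only if it holds after each projection $\pi_\alpha$, the projections being lex because $\Cat_\infty^{\on{lex}}$ is closed under small limits. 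This costs a little more work but yields the stronger statement that $\on{Fun}^{\BSigma\textrm{-Cart}}(\BSigma^{n_1,\dots,n_k},-)$ is continuous for every tuple, and it is essentially the argument the paper itself gestures at in the remark following the continuity of $\on{Span}_k$, where Corollary~\ref{cor:KanClosed} is used to extend the conclusion to the larger subcategory $\Cat_\infty^{\mc{K}}$. Both arguments are valid; the paper's is shorter, while yours is more self-contained in that it does not rely on the Segal-condition bookkeeping of Lemma~\ref{lem:ContOfCatObj}.
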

\begin{proof}
	Following Lemma~\ref{lem:ContOfCatObj}, we need only show that the composite 
	\begin{subequations}
	\begin{equation}\label{eq:UnderlyMorph}\Cat_\infty^{\on{lex}}\xrightarrow{\on{SPAN}_k^+}\on{Cat}^k(\Cat_\infty) \xrightarrow{i^*} \on{Fun}\big((\mbb{Morph}^k)^{op},\Cat_\infty\big)\end{equation}
	is continuous, where $i:\mbb{Morph}^k\to \BDelta^k$ is as in Lemma~\ref{lem:ContOfCatObj}. But \eqref{eq:UnderlyMorph} is equivalent to the composite
	\begin{equation}\label{eq:UnderlyMorph2}\Cat_\infty^{\on{lex}}\hookrightarrow\Cat_\infty\xrightarrow{\overline{\on{SPAN}}_k^+}\on{Fun}\big((\BDelta^k)^{op},\Cat_\infty\big) \xrightarrow{i^*} \on{Fun}\big((\mbb{Morph}^k)^{op},\Cat_\infty\big).\end{equation}
	\end{subequations}
	The first arrow in \eqref{eq:UnderlyMorph2} is continuous by \cite{Riehl:2014ut} (or Theorem~\ref{thm:LimInLim}), the second arrow is continuous since $\Cat_\infty$ is Cartesian closed (as explained above), and the final arrow is continuous by \cite[Corollary~5.1.2.3]{Lurie:2009un}.
\end{proof}

Next, let $\iota: \Cat_\infty\to\Spc$ denote the right adjoint to the inclusion, which sends an $\infty$-category $\mc{C}$ to its classifying space of objects, the largest Kan complex contained in $\mc{C}$. Then as in \cite{Haugseng:2014vw}, we define
\begin{equation}\label{eq:SPAN}
\begin{split}
\on{SPAN}_k:\Cat_\infty^{\on{lex}}&\to\on{Cat}^k(\Spc),\\
\mc{C}&\to \bigg[(n_1,\dots,n_k)\to \on{Map}^{\BSigma\textrm{-Cart}}\big(\BSigma^{n_1,\dots,n_k},\mc{C}\big)\bigg]
\end{split}
\end{equation}
to be the composite $\iota\circ\on{SPAN}_k^+$.

Finally let $U_{\on{Seg}}:\on{Cat}^k(\Spc)\to\on{Seg}_k(\Spc)$ denote a right adjoint to $\on{Seg}_k(\Spc)\hookrightarrow\on{Cat}^k(\Spc)$. Then $\on{Span}_k:=U_{\on{Seg}}\circ \on{SPAN}_k$ takes values in complete Segal spaces \cite[Corollary~3.18]{Haugseng:2014vw}.

\begin{Theorem}
	The functor
	$$\on{Span}_k:\Cat_\infty^{\on{lex}}\to\CSS_k(\Spc)$$
	is continuous.	
\end{Theorem}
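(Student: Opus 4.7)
The plan is to write $\on{Span}_k$ as a composite of continuous functors, each piece of which has already been identified in the section. By construction,
$$\on{Span}_k \;=\; U_{\on{Seg}} \circ \on{SPAN}_k \;=\; U_{\on{Seg}} \circ (\iota \circ -) \circ \on{SPAN}_k^+,$$
where $(\iota\circ -)$ denotes the post-composition functor induced by $\iota:\Cat_\infty\to\Spc$. I would therefore check continuity factor by factor.

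First, Lemma~\ref{lem:SPAN+Cont} says that $\on{SPAN}_k^+:\Cat_\infty^{\on{lex}}\to \on{Cat}^k(\Cat_\infty)$ is continuous. Next, $\iota$ is right adjoint to the inclusion $\Spc\hookrightarrow\Cat_\infty$, hence preserves all small limits. Post-composition with $\iota$ gives a continuous functor $\on{Fun}((\BDelta^k)^{op},\Cat_\infty)\to\on{Fun}((\BDelta^k)^{op},\Spc)$ since limits in functor $\infty$-categories are detected pointwise (\cite[Corollary~5.1.2.3]{Lurie:2009un}); because $\iota$ preserves finite limits it carries the Segal diagrams \eqref{eq:Segal} to equivalences, so it restricts to a continuous functor $(\iota\circ -):\on{Cat}^k(\Cat_\infty)\to \on{Cat}^k(\Spc)$ (using also that $\on{Cat}^k(-)$ is defined by limit conditions, so closed under limits in the ambient functor category). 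Thus $\on{SPAN}_k = (\iota\circ -)\circ \on{SPAN}_k^+$ is continuous with values in $\on{Cat}^k(\Spc)$.

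Finally, $U_{\on{Seg}}:\on{Cat}^k(\Spc)\to \on{Seg}_k(\Spc)$ is a right adjoint, so continuous. Composing, we obtain that $U_{\on{Seg}}\circ \on{SPAN}_k:\Cat_\infty^{\on{lex}}\to \on{Seg}_k(\Spc)$ is continuous. By \cite[Corollary~3.18]{Haugseng:2014vw} this composite lands in the reflective full subcategory $\CSS_k(\Spc)\hookrightarrow \on{Seg}_k(\Spc)$; since limits in a reflective localization are computed in the ambient category (the inclusion being a right adjoint), the corestriction $\on{Span}_k:\Cat_\infty^{\on{lex}}\to \CSS_k(\Spc)$ remains continuous.

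The argument is thus essentially a formal combination of Lemma~\ref{lem:SPAN+Cont} with the fact that $\iota$, $U_{\on{Seg}}$, and the inclusion of complete Segal objects are all right adjoints. The only mild point to check carefully is that post-composition with $\iota$ respects the Segal / $k$-uple conditions, which is immediate from $\iota$ preserving finite limits; so I do not foresee a real obstacle beyond assembling these ingredients.
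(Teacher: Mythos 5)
Your proof is correct and follows essentially the same route as the paper: both decompose $\on{Span}_k$ as $U_{\on{Seg}}\circ\iota\circ\on{SPAN}_k^+$, invoke Lemma~\ref{lem:SPAN+Cont} for the last factor, and use that $\iota$ and $U_{\on{Seg}}$ are right adjoints. You simply spell out a few details the paper leaves implicit (pointwise detection of limits for post-composition with $\iota$, and the harmlessness of corestricting to the reflective subcategory $\CSS_k(\Spc)$), which is fine.
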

\begin{proof}
	$\on{Span}_k$ is the composite $U_{\on{Seg}}\circ \iota\circ\on{SPAN}_k^+$, the first two functors are continuous (since they are right adjoints), and the last functor is continuous by Lemma~\ref{lem:SPAN+Cont}.
\end{proof}

\begin{remark}
Let $\mc{K}$ be the subcategory inclusions $i_{n_1,\dots,n_k}:\BLambda^{n_1,\dots,n_k}\to \BSigma^{n_1,\dots,n_k}$ used in the definition of a Cartesian functor (cf. Definition~\ref{def:CartFunct}). Let $\Cat_\infty^\mc{K}\subset \Cat_\infty$ denote the subcategory  consisting of $\infty$-categories which admit all right Kan extensions along any $i_{n_1,\dots,n_k}:\BLambda^{n_1,\dots,n_k}\to \BSigma^{n_1,\dots,n_k}$  and of functors which preserve those right Kan extensions. Then $\Cat_\infty^\mc{K}$ is the
 maximal subcategory of $\Cat_\infty$ on which the functor $\on{SPAN}_k^+$ may be defined. As a consequence of Corollary~\ref{cor:KanClosed}, each of the functors
	\begin{align*}\on{SPAN}_k^+:\Cat_\infty^{\mc{K}}&\to\on{Cat}^k(\Cat_\infty)\\
	\on{SPAN}_k:\Cat_\infty^{\mc{K}}&\to\on{Cat}^k(\Spc)\\
	\on{Span}_k:\Cat_\infty^{\mc{K}}&\to\CSS_k(\Spc)
	\end{align*}
	are continuous.
\end{remark}

\subsection{The sheaf of iterated spans with local systems.}

Suppose that $\mc{X}$ is an $\infty$-topos, and $X_{\bullet,\dots,\bullet}\in \CSS_k(\mc{X})$ is a complete $k$-fold Segal object in $\mc{X}$. In \cite{Haugseng:2014vw}, Haugseng gave an elegant construction of the $(\infty,k)$-category 
$$\on{Span}_k(\mc{X},X_{\bullet,\dots,\bullet})$$ of iterated $k$-fold spans in $\mc{X}$ with local systems valued in $X_{\bullet,\dots,\bullet}$, 
\begin{itemize}
\item whose objects are objects in $\mc{X}$ equipped with a map to the objects of the local system, $X_{\bullet,\dots,\bullet}$,
\item whose morphisms are spans in $\mc{X}$ equipped with compatible maps to the space of morphisms of the local system, $X_{\bullet,\dots,\bullet}$, 
\item $\dots$,
\item and whose $i$-morphisms are $i$-fold spans in $\mc{X}$ equipped with compatible maps to the space of $i$-morphisms of the local system, $X_{\bullet,\dots,\bullet}$.
\end{itemize}



In this section, we show that for any continuous functor $\sigma:\mc{X}^{op}\to \int\CSS_k$ over $$\mc{X}^{op}\xrightarrow{U\mapsto \mc{X}_{/U}}\LTop,$$ 
 the functor
$$\mc{X}^{op}\xrightarrow{U\mapsto \on{Span}_k(\mc{X}_{/U},\sigma(U))}\CSS_k(\hat\Spc)$$
forms an $(\infty,k)$-stack
over $\mc{X}$.

We begin by describing the functor
$$\int\CSS_k\xrightarrow{(\mc{X}, X_{\bullet,\dots,\bullet})\mapsto \on{Span}_k(\mc{X},X_{\bullet,\dots,\bullet})}\CSS_k(\hat\Spc)$$ 
in more detail.

As in \cite{Haugseng:2014vw}, we let $\hat\BSigma\xrightarrow{q} \BDelta^{op}$ denote the Grothendieck fibration classified by the functor $\BSigma^\bullet:\BDelta\xrightarrow{[n]\mapsto \BSigma^n} \Cat$, whose objects are pairs $\big([n],(i,j)\big)$ with $0\leq i\leq j\leq n$, and whose morphisms 
\begin{equation}\label{eq:hatBSig}\big([n],(i,j)\big)\xrightarrow{\begin{array}{rcl}[n]&\leftarrow &[m]:\phi\\(i,j)&\rightarrow &\big(\phi(i'),\phi(j')\big)\end{array}}\big([m],(i',j')\big)\end{equation} are pairs of morphisms $\phi:[m]\to[n]$ in $\BDelta$ and $(i,j)\to \big(\phi(i'),\phi(j')\big)$ in $\BSigma^n$.

Let $\imath^*\mc{Z}\to \LTop$ denote the (cannonical) presentable fibration classified by the inclusion $\imath:\LTop\hookrightarrow\widehat{\Cat_\infty}$. The functor $\overline{\on{SPAN}}^+_k:\LTop\times(\BDelta^k)^{op}\to \widehat{\Cat_\infty}$ classifies the coCartesian fibration (cf. \cite[Corollary~3.2.2.13]{Lurie:2009un}) $$(\imath^*\mc{Z}\times(\BDelta^k)^{op})^{\LTop\times \hat\BSigma^k}\to \LTop\times(\BDelta^k)^{op},$$
whose fibre over any $\big(\mc{X};(n_1,\dots,n_k)\big)\in \LTop\times(\BDelta^k)^{op}$ is equivalent to
$$\on{Fun}\big(\BSigma^{n_1,\dots,n_k},\mc{X}\big).$$
Similarly, $\on{SPAN}^+_k:\LTop\times(\BDelta^k)^{op}\to \widehat{\Cat_\infty}$ classifies the coCartesian fibration defined as the full subcategory $${\int\on{SPAN}^+_k}\subset (\imath^*\mc{Z}\times(\BDelta^k)^{op})^{\LTop\times \hat\BSigma^k}$$
spanned (over $\big(\mc{X};(n_1,\dots,n_k)\big)\in \LTop\times(\BDelta^k)^{op}$) by the Cartesian functors $\BSigma^{n_1,\dots,n_k}\to \mc{X}$ (cf. Definition~\ref{def:CartFunct}).

There is a second functor $$\Pi:\hat\BSigma\xrightarrow{\big([n],(i,j)\big)\mapsto [j-i]} \BDelta^{op}$$ which sends the map \eqref{eq:hatBSig}
to $$[j-i]\xrightarrow{k\mapsto\phi(k+i')-i}[j'-i'].$$

The corresponding morphism of Cartesian fibrations 
$$\begin{tikzpicture}
	\mmat{m}{\BDelta^{op}\times\BDelta^{op}&&\hat\BSigma\\&\BDelta^{op}&\\};
	\draw[<-] (m-1-1) -- node[swap]{ $\Pi\times q$} (m-1-3);
	\draw[->] (m-1-1) -- (m-2-2);
	\draw[->] (m-1-3) -- (m-2-2);
\end{tikzpicture}$$
induces a morphism of coCartesian fibrations
$$\begin{tikzpicture}
	\mmat{m}{(\imath^*\mc{Z}\times(\BDelta^k)^{op})^{\LTop\times (\BDelta^k)^{op}\times (\BDelta^k)^{op}}&&(\imath^*\mc{Z}\times(\BDelta^k)^{op})^{\LTop\times \hat\BSigma^k}\\
	&\LTop\times(\BDelta^k)^{op}&\\};
	\draw[->] (m-1-1) -- (m-1-3);
	\draw[->] (m-1-1) -- (m-2-2);
	\draw[->] (m-1-3) -- (m-2-2);
\end{tikzpicture}$$

By \cite[Lemma~4.3]{Haugseng:2014vw} this restricts to a morphism:
\begin{equation}\label{eq:sectS}\begin{tikzpicture}
	\mmat{m}{\bigg({\int\CSS_k}\bigg)\times(\BDelta^k)^{op}&&{\int\on{SPAN}^+_k}\\
	&\LTop\times(\BDelta^k)^{op}&\\};
	\draw[->] (m-1-1) -- node {$s_0$} (m-1-3);
	\draw[->] (m-1-1) -- (m-2-2);
	\draw[->] (m-1-3) -- (m-2-2);
\end{tikzpicture}\end{equation}
which sends any $\big(\mc{X};(n_1,\dots,n_k)\big)\in \LTop\times(\BDelta^k)^{op}$ to 
$$X_{\bullet,\dots,\bullet}\circ \Pi_{n_1,\dots,n_k}\in \on{Fun}^{\BSigma\textrm{-Cart}}(\BSigma^{n_1,\dots,n_k},\mc{X}),$$
where $\Pi_{n_1,\dots,n_k}:=\Pi\rvert_{\BSigma^{n_1,\dots,n_k}}$.

In turn, \eqref{eq:sectS} defines a section of the left hand arrow in the pullback square 
$$\begin{tikzpicture}
	\mmat{m}{\mc{Q} & {\int\on{SPAN}^+_k}\\
	\bigg({\int\CSS_k}\bigg)\times(\BDelta^k)^{op} & \LTop\times(\BDelta^k)^{op}\\};
	\draw[->] (m-1-1) -- (m-1-2);
	\draw[->] (m-1-1) -- (m-2-1);
	\draw[->] (m-1-2) -- (m-2-2);
	\draw[->] (m-2-1) -- (m-2-2);
	\draw[dotted,->] (m-2-1) to[bend left] node{$s$}(m-1-1);
\end{tikzpicture}$$
For brevity, we denote $\mc{D}=\big({\int\CSS_k}\big)\times(\BDelta^k)^{op}$, and we define $\mc{Q}^{/s}\to \mc{D}\cong\big({\int\CSS_k}\big)\times(\BDelta^k)^{op}$ to be the simplicial set satisfying the universal property that
for any morphism of simplicial sets $Y\to \mc{D}$, 
 commutative diagrams of the form 
$$\begin{tikzpicture}
	\mmat{m}{\mc{D} &\mc{Q}\\
	Y\diamond_{\mc{D}}\mc{D}& \mc{D}\\};
	\draw[->] (m-1-1) -- node {$s$} (m-1-2);
	\draw[right hook->] (m-1-1) --  (m-2-1);
	\draw[->] (m-1-2) --  (m-2-2);
	\draw[->] (m-2-1) -- (m-2-2);
	\draw[->] (m-2-1) --  (m-1-2);
\end{tikzpicture}$$
(where $Y\diamond_{\mc{D}}\mc{D}=Y\times\Delta^1\coprod_{Y\times\{1\}}\mc{D}$)
correspond to diagrams of the form
$$\begin{tikzpicture}\node (y) at (-2,1) {$Y$};
\node (q) at (2,1) {$\mc{Q}^{/s}$};
\node (css) at (0,0) {$\mc{D}$};
\draw[->] (y) -- (q);
\draw[->] (y) -- (css);
\draw[->] (q) -- (css);
\end{tikzpicture}$$

By \cite[Proposition~4.2.2.4.]{Lurie:2009un}  $\mc{Q}^{/s}\to \big({\int\CSS_k}\big)\times(\BDelta^k)^{op}$ is a coCartesian fibration whose fibre over $\big(\mc{X},X_{\bullet,\dots,\bullet};(n_1,\dots,n_k)\big)\in {\int\CSS_k}\times(\BDelta^k)^{op}$ is equivalent to the overcategory
$$\on{Fun}^{\BSigma\textrm{-Cart}}(\BSigma^{n_1,\dots,n_k},\mc{X})_{/X_{\bullet,\dots,\bullet}\circ \Pi_{n_1,\dots,n_k}}$$

Let $$(\on{SPAN}^+_k)^{/s}:{\int\CSS_k}\to \on{Fun}\big((\BDelta^k)^{op},\widehat{\Cat_\infty}\big)$$ be a functor classifying $\mc{Q}^{/s}$. By, \cite[Proposition~4.5]{Haugseng:2014vw}, $(\on{SPAN}^+_k)^{/s}$ takes values in $k$-uple category objects in $\widehat{\Cat_\infty}$. We define $$\on{Span}_k:=U_{\on{Seg}}\circ \iota\circ (\on{SPAN}^+_k)^{/s}:{\int\CSS_k}\to \on{Seg}_k(\widehat{\Spc}).$$
It follows from \cite[Proposition~4.8]{Haugseng:2014vw} that $\on{Span}_k$ takes values in complete $k$-fold Segal spaces, i.e. we have a functor
\begin{equation}\label{eq:locSpan}\on{Span}_k:{\int\CSS_k}\to \CSS_k(\widehat{\Spc})\end{equation}
which sends a complete $k$-fold Segal object $X_{\bullet,\dots,\bullet}\in\CSS_k(\mc{X})$ to the complete $k$-fold Segal space $$\on{Span}_k(\mc{X},X_{\bullet,\dots,\bullet})$$ of iterated spans in $\mc{X}$ with local systems valued in $X_{\bullet,\dots,\bullet}$ (cf. \cite[\S~4]{Haugseng:2014vw}).

\subsubsection{Continuity of $(\mc{X},X_{\bullet,\dots,\bullet})\to \on{Span}_k(\mc{X},X_{\bullet,\dots,\bullet})$.}

\begin{Lemma}\label{lem:SpanProduct}
	The functor \eqref{eq:locSpan} preserves small products.
\end{Lemma}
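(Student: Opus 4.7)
The plan is to peel off right adjoints and then reduce to a pointwise check. Since $\on{Span}_k = U_{\on{Seg}}\circ \iota\circ (\on{SPAN}^+_k)^{/s}$, and both $U_{\on{Seg}}$ and $\iota$ are right adjoints (hence continuous), it suffices to show that
$$(\on{SPAN}^+_k)^{/s}:{\int\CSS_k}\to \on{Fun}\big((\BDelta^k)^{op},\widehat{\Cat_\infty}\big)$$
preserves small products. Since limits in functor $\infty$-categories are pointwise (cf. \cite[Corollary~5.1.2.3]{Lurie:2009un}), it further suffices, for each $(n_1,\dots,n_k)\in\BDelta^k$, to check that the assignment
$$(\mc{X},X_{\bullet,\dots,\bullet})\mapsto \on{Fun}^{\BSigma\textrm{-Cart}}\bigl(\BSigma^{n_1,\dots,n_k},\mc{X}\bigr)_{/X_{\bullet,\dots,\bullet}\circ \Pi_{n_1,\dots,n_k}}$$
sends small products in $\int\CSS_k$ to products in $\widehat{\Cat_\infty}$.

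The first step is to identify a product $\prod_\alpha (\mc{X}_\alpha, X^\alpha_{\bullet,\dots,\bullet})$ in $\int\CSS_k$. Since $\int\CSS_k\to \LTop$ preserves limits, it lies over $\prod_\alpha \mc{X}_\alpha$ in $\LTop$. Using that $\LTop$ embeds into $\widehat{\Cat_\infty}$ preserving products---the product of $\infty$-topoi is just the Cartesian product of underlying $\infty$-categories, with projections $\pi_\alpha^*$ whose right adjoints $(\pi_\alpha)_*$ insert terminal objects in the remaining slots---I would then invoke the Cartesian fibration structure of $\int\CSS_k\to\LTop$ (classified by $\mc{X}\mapsto \CSS_k(\mc{X})$ with $f^*\mapsto (f_*)_!$) to write the underlying complete Segal object of the product as $P := \prod_\alpha ((\pi_\alpha)_*)_! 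X^\alpha_{\bullet,\dots,\bullet}$ in $\CSS_k(\prod_\alpha \mc{X}_\alpha)$. A short calculation then shows that, multidegree-by-multidegree, $P$ is simply the tuple $(X^\alpha_{n_1,\dots,n_k})_\alpha$ in the product topos; in particular $\pi_\alpha^* P \simeq X^\alpha_{\bullet,\dots,\bullet}$.

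For the second step, $\on{Fun}(\BSigma^{n_1,\dots,n_k},-)$ is continuous (as a right adjoint), so $\on{Fun}(\BSigma^{n_1,\dots,n_k},\prod_\alpha \mc{X}_\alpha) \simeq \prod_\alpha \on{Fun}(\BSigma^{n_1,\dots,n_k},\mc{X}_\alpha)$. Under this equivalence, Cartesianness (Definition~\ref{def:CartFunct}) is a pullback-square condition that can be checked component-wise, since finite limits in the product topos are computed component-wise; hence $\on{Fun}^{\BSigma\textrm{-Cart}}(\BSigma^{n_1,\dots,n_k},\prod_\alpha \mc{X}_\alpha)\simeq \prod_\alpha \on{Fun}^{\BSigma\textrm{-Cart}}(\BSigma^{n_1,\dots,n_k},\mc{X}_\alpha)$. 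Combining this with the fact that $P\circ \Pi_{n_1,\dots,n_k}$ corresponds under this equivalence to the tuple $(X^\alpha\circ \Pi_{n_1,\dots,n_k})_\alpha$, and that the overcategory of a product over a tuple is the product of overcategories, yields the claimed decomposition.

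The main obstacle I anticipate is the first step: carefully pinning down the product in $\int\CSS_k$ via its Cartesian fibration over $\LTop$, and verifying that the resulting complete Segal object reduces to the naive tuple in the product topos. Once this identification is in hand, the pointwise check is essentially a bookkeeping exercise exploiting how $\on{Fun}(\BSigma^{n_1,\dots,n_k},-)$, Cartesianness, and overcategories all decompose over products of $\infty$-categories.
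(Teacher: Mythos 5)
Your proof is correct and follows the same overall architecture as the paper's --- peel off the right adjoints $U_{\on{Seg}}$ and $\iota$, identify the product in $\int\CSS_k$ as a relative product over the product topos, and decompose the relevant overcategories factorwise --- but it substitutes direct computations for the two key lemmas the paper leans on. First, where you compute the relative product by hand as $\prod_\alpha\big((\pi_\alpha)_*\big)_!X^\alpha_{\bullet,\dots,\bullet}$ and check levelwise (using that $\CSS_k(\prod_\alpha\mc{X}_\alpha)$ is a reflective subcategory of the functor category, so its products are computed there, i.e.\ multidegree by multidegree) that this is the naive tuple with $\pi_\alpha^*P\simeq X^\alpha_{\bullet,\dots,\bullet}$, the paper instead restricts to the wide subcategory $\int^{\acute{e}t}\CSS_k$ and invokes Theorem~\ref{thm:CSSaSheaf} to identify the fibre over $\prod_j\mc{X}^j$ with $\prod_j\CSS_k(\mc{X}^j)$; your version is more self-contained, the paper's reuses machinery it has already built. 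Second, where you verify product-preservation at every multidegree $(n_1,\dots,n_k)$ and must therefore argue that the Cartesianness condition of Definition~\ref{def:CartFunct} is detected componentwise in a product of $\infty$-categories (true, since the pointwise limits defining a right Kan extension in $\prod_\alpha\mc{X}_\alpha$ are computed factorwise), the paper invokes Lemma~\ref{lem:ContOfCatObj} to reduce to multidegrees with all $i_j\le 1$, where $\BLambda^{i_1,\dots,i_k}=\BSigma^{i_1,\dots,i_k}$ and the Cartesianness condition is vacuous, so it never has to address it. Both routes are sound; yours trades two citations for two short verifications.
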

\begin{proof}
Since $\int^{\acute{e}t}\CSS_k\subseteq\int\CSS_k$ is a continuous inclusion of a wide subcategory (i.e. it contains all the objects), it suffices to show that the restriction of \eqref{eq:locSpan} to $\int^{\acute{e}t}\CSS_k$ preserves small products.

	Now suppose $\{X^j_{\bullet,\dots,\bullet}\in \CSS_k(\mc{X}^j)\}_{j\in J}$ is a set of complete Segal objects indexed by a small set $J$. Since $p:\int^{\acute{e}t}\CSS_k\to \LTop_{\acute{e}t}$ is a presentable fibration and $\LTop_{\acute{e}t}$ has small products, we may compute the product $$\prod_{j\in J}(\mc{X}^j,X^j_{\bullet,\dots,\bullet})\in \int\CSS_k$$ by first computing the product $\prod_{j\in J}\mc{X}^j$ in $\LTop_{\acute{e}t}$, and then computing the $p$-relative product of $\{X^j_{\bullet,\dots,\bullet}\in \CSS_k(\mc{X}^j)\}_{j\in J}$ over $\prod_{j\in J}\mc{X}^j$. 
	
	Note that \cite[Proposition~6.3.2.3 and Theorem~6.3.5.13]{Lurie:2009un} imply that $\prod_{j\in J}\mc{X}^j$ is just the product of the $\infty$-categories $\mc{X}^j$ (i.e. we can take this product in $\widehat{\Cat_\infty}$ rather than $\LTop_{\acute{e}t}$). Next, Theorem~\ref{thm:CSSaSheaf} implies that the fibre of $\int^{\acute{e}t}\CSS_k$ over $\prod_{j\in J} \mc{X}^j$ is just 
	$$\CSS_k(\prod_{j\in J} \mc{X}^j)\cong \prod_{j\in J} \CSS_k(\mc{X}^j),$$
	where the right hand product is taken in $\widehat{\Cat_\infty}$.
	Consequently, the $p$-relative product of $\{X^j_{\bullet,\dots,\bullet}\}_{j\in J}$ over $\prod_{j\in J} \mc{X}^j$ is $$\prod_{j\in J}X^j_{\bullet,\dots,\bullet}\in \prod_{j\in J} \CSS_k(\mc{X}^j).$$


	Next we argue that the restriction of $(\on{SPAN}^+_k)^{/s}$ to $\int^{\acute{e}t}\CSS_k$ preserves small products. In view of Lemma~\ref{lem:ContOfCatObj}, we need only show that for any $0\leq i_1,\dots, i_k\leq 1$, the functor
	\begin{equation}\label{eq:overCatSpan1Prod}\int^{\acute{e}t}\CSS_k\xrightarrow{(\mc{X},X_{\bullet,\dots,\bullet})\mapsto \on{Fun}(\BSigma^{i_1,\dots,i_k},\mc{X})_{/X_{\bullet,\dots,\bullet}\circ \Pi_{i_1,\dots,i_k}}}\widehat{\Cat_\infty}.\end{equation}
	preserves small products.
	
	Notice that the continuous functor 
	$$\prod_{j\in J} \CSS_k(\mc{X}^j)\cong \CSS_k(\prod_{j\in J} \mc{X}^j)\hookrightarrow \on{Fun}\big((\BDelta^k)^{op},\prod_{j\in J} \mc{X}^j)\xrightarrow{\Pi_{i_1,\dots,i_k}^*} \on{Fun}(\BSigma^{i_1,\dots,i_k},\prod_{j\in J} \mc{X}^j)$$
	 takes $\prod_{j\in J}X^j_{\bullet,\dots,\bullet}$ to $$\big(\prod_{j\in J}X^j_{\bullet,\dots,\bullet}\big)\circ \Pi_{i_1,\dots,i_k}\cong \prod_{j\in J}\big(X^j_{\bullet,\dots,\bullet}\circ \Pi_{i_1,\dots,i_k}\big).$$
	So \begin{multline*}\on{Fun}(\BSigma^{i_1,\dots,i_k},\prod_{j\in J}\mc{X}^j)_{/\big((\prod_{j\in J}X^j_{\bullet,\dots,\bullet})\circ \Pi_{i_1,\dots,i_k}\big)}\\\cong \big(\prod_{j\in J}\on{Fun}(\BSigma^{i_1,\dots,i_k},\mc{X}^j)\big)_{/\big(\prod_{j\in J}(X^j_{\bullet,\dots,\bullet}\circ \Pi_{i_1,\dots,i_k})\big)}\\
 	\cong \prod_{j\in J}\bigg(\on{Fun}(\BSigma^{i_1,\dots,i_k},\mc{X}^j)_{/X^j_{\bullet,\dots,\bullet}\circ \Pi_{i_1,\dots,i_k}}\bigg),
 \end{multline*}
which implies that \eqref{eq:overCatSpan1Prod} preserves small products.

Finally, we have $\on{Span}_k= U_{\on{Seg}}\circ\iota\circ (\on{SPAN}^+_k)^{/s}$, and since $U_{\on{Seg}}$ and $\iota$ are both right adjoints, they are continuous, which implies the statement we wished to prove.
	
\end{proof}

\begin{Lemma}\label{lem:SpanPullback}
The functor \eqref{eq:locSpan} preserves pullbacks.
\end{Lemma}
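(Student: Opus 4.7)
The strategy is to mimic the proof of Lemma~\ref{lem:SpanProduct}. Since $U_{\on{Seg}}$ and $\iota$ are right adjoints, they preserve pullbacks, so it suffices to show that $(\on{SPAN}^+_k)^{/s}$ preserves pullbacks. By Lemma~\ref{lem:ContOfCatObj}, this reduces further to showing that, for each $(i_1,\dots,i_k)\in\{0,1\}^k$, the functor
\begin{equation*}
\int\CSS_k \longrightarrow \widehat{\Cat_\infty}, \qquad (\mc{X}, X_{\bullet,\dots,\bullet}) \longmapsto \on{Fun}\big(\BSigma^{i_1,\dots,i_k}, \mc{X}\big)_{/ X_{\bullet,\dots,\bullet}\circ\Pi_{i_1,\dots,i_k}}
\end{equation*}
preserves pullbacks.

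Given a cospan $(\mc{X}_1, X_1) \to (\mc{Y},Y) \leftarrow (\mc{X}_2, X_2)$ in $\int\CSS_k$ with pullback $(\mc{X},X)$, I will use that $\int\CSS_k\to\LTop$ is a presentable fibration preserving small limits (cf.\ the remark following Lemma~\ref{lem:CSSPres}) to conclude $\mc{X} = \mc{X}_1 \times_{\mc{Y}} \mc{X}_2$ in $\LTop$, with $X$ the fibrewise pullback in $\CSS_k(\mc{X})$ of the Cartesian lifts of $X_1$ and $X_2$ over the corresponding Cartesian lift of $Y$ (denoting projection and cospan geometric morphisms by $\pi_i:\mc{X}\to\mc{X}_i$ and $f_i:\mc{X}_i\to\mc{Y}$). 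I will then verify two statements: first, that applying $\on{Fun}(\BSigma^{i_1,\dots,i_k},-)$ to this pullback of $\infty$-topoi yields a pullback of $\infty$-categories; and second, that formation of over-categories commutes with this pullback at the compatible triple $(X_1\circ\Pi,\, X_2\circ\Pi,\, Y\circ\Pi)$. Together these yield the desired decomposition
\begin{equation*}
\on{Fun}(\BSigma^{i_1,\dots,i_k},\mc{X})_{/X\circ\Pi} \;\simeq\; \on{Fun}(\BSigma^{i_1,\dots,i_k},\mc{X}_1)_{/X_1\circ\Pi} \times_{\on{Fun}(\BSigma^{i_1,\dots,i_k},\mc{Y})_{/Y\circ\Pi}} \on{Fun}(\BSigma^{i_1,\dots,i_k},\mc{X}_2)_{/X_2\circ\Pi}.
\end{equation*}

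The second statement is a formal consequence of the universal property of over-categories in $\infty$-categories: both sides classify the same compatible cones. The first statement is the expected main obstacle, since $\LTop \hookrightarrow \widehat{\Cat_\infty}$ does not preserve pullbacks in general. The plan to overcome this is to reduce to the etale case using the sheaf property of $\CSS_k$ established in Theorem~\ref{thm:CSSaSheaf}: covering the pullback by compatible families of etale data where, as in the proof of Lemma~\ref{lem:SpanProduct}, pullbacks in $\LTop_{\acute{e}t}$ coincide with pullbacks in $\widehat{\Cat_\infty}$, and then chasing compatible Segal structures level by level via Lemma~\ref{lem:ContOfCatObj} to obtain the identification. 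Combining this with the over-category decomposition and the continuity of $U_{\on{Seg}}\circ \iota$ completes the argument.
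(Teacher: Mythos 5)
Your overall skeleton matches the paper's: reduce via the right adjoints $U_{\on{Seg}}$ and $\iota$ and via Lemma~\ref{lem:ContOfCatObj} to the levelwise slice functor $(\mc{X},X)\mapsto \on{Fun}(\BSigma^{i_1,\dots,i_k},\mc{X})_{/X\circ\Pi_{i_1,\dots,i_k}}$, and compute the pullback in $\int\CSS_k$ as a pullback of topoi followed by a fibrewise pullback (the paper cites \cite[Corollary~4.3.1.11]{Lurie:2009un} for exactly this). But you have misidentified where the work lies. The step you flag as the ``expected main obstacle'' is not one: by \cite[Proposition~6.3.2.3]{Lurie:2009un} the inclusion $\LTop\subseteq\widehat{\Cat_\infty}$ preserves small limits, so the pullback square of topoi is already a pullback of underlying $\infty$-categories and $\on{Fun}(\BSigma^{i_1,\dots,i_k},-)$ carries it to a pullback. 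Worse, your proposed fix --- reducing to the \'etale case via Theorem~\ref{thm:CSSaSheaf} --- cannot work here: a cospan in $\int\CSS_k$ need not lie in $\int^{\acute{e}t}\CSS_k$, since only the objects, not the morphisms, are guaranteed to be \'etale. This is precisely why the paper restricts to the wide subcategory $\int^{\acute{e}t}\CSS_k$ for \emph{products} (a discrete diagram involves no morphisms) but argues directly in $\int\CSS_k$ for pullbacks.

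Conversely, the step you dismiss as ``a formal consequence of the universal property of over-categories'' is where the paper does essentially all of its work, and your framing of it is not quite right. The fibrewise pullback in $\CSS_k(\mc{W})$ is $W=(a_*)_!X\times_{(c_*)_!Z}(b_*)_!Y$, formed from the \emph{pushforwards} of the local systems; there is no ``compatible triple'' whose cones both sides classify, because the three given local systems live in three different topoi and are related only by unit maps, not equivalences. The paper's argument has two genuinely non-formal inputs: first, that for fixed $\mc{C}$ the assignment $c\mapsto\mc{C}_{/c}$ preserves pullbacks, proved via continuity of the Yoneda embedding together with the fact that $\Pre(\mc{C})\hookrightarrow(\Cat_\infty)_{/\mc{C}}$ (through right fibrations) is a reflective localization (\cite[Corollary~2.1.2.10]{Lurie:2009un}); this is applied inside the single fibre over $\mc{W}$ to decompose $\on{Fun}(\BSigma^{I},\mc{W})_{/W\circ\Pi_I}$ as a pullback of slices over the pushed-forward objects. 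Second, the adjunctions $(a^*)_!\dashv(a_*)_!$ give pullback squares identifying $\on{Fun}(\BSigma^{I},\mc{W})_{/(a_*)_!X\circ\Pi_I}$ with $\on{Fun}(\BSigma^{I},\mc{W})\times_{\on{Fun}(\BSigma^{I},\mc{X})}\on{Fun}(\BSigma^{I},\mc{X})_{/X\circ\Pi_I}$, which is how one moves from slices over $\mc{W}$ to the slices over $\mc{X}$, $\mc{Y}$, $\mc{Z}$ appearing in your target formula; the conclusion then follows by pasting the resulting cube against the (already established) pullback square of functor categories. Your target decomposition is correct, but you need to supply these two inputs rather than appeal to formality, and you should drop the \'etale detour.
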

\begin{proof}
We begin by arguing that $(\on{SPAN}^+_k)^{/s}:\int\CSS_k\to \widehat{\Cat_\infty}$ preserves pullbacks. In view of Lemma~\ref{lem:ContOfCatObj}, we need only show that for any $0\leq i_1,\dots, i_k\leq 1$, the functor
	\begin{equation}\label{eq:overCatSpan1Pullback}\int\CSS_k\xrightarrow{(\mc{X},X_{\bullet,\dots,\bullet})\mapsto \on{Fun}(\BSigma^{i_1,\dots,i_k},\mc{X})_{/X_{\bullet,\dots,\bullet}\circ \Pi_{i_1,\dots,i_k}}}\widehat{\Cat_\infty}.\end{equation}
	preserves pullbacks.

	Suppose we have a diagram 
	\begin{subequations}\begin{equation}\label{eq:CSSpullback}(\mc{X},X_{\bullet,\dots,\bullet})\rightarrow (\mc{Z},Z_{\bullet,\dots,\bullet})\leftarrow(\mc{Y},Y_{\bullet,\dots,\bullet})\end{equation} in $\int\CSS_k$ (here $X_{\bullet,\dots,\bullet}\in \CSS_k(\mc{X})$, $Y_{\bullet,\dots,\bullet}\in\CSS_k(\mc{Y})$, and $Z_{\bullet,\dots,\bullet}\in \CSS_k(\mc{Z})$).
	To compute the pullback of \eqref{eq:CSSpullback} we use \cite[Corollary~4.3.1.11]{Lurie:2009un}. That is, we first compute the pullback
	\begin{equation}\label{eq:CSSpullback1}
	\begin{tikzpicture}
	\mmat{m}{\mc{W} &\mc{Y}\\ \mc{X}& \mc{Z}\\};
	\draw[->] (m-1-1) -- node {$b^*$} (m-1-2);
	\draw[->] (m-1-1) -- node[swap] {$a^*$}  (m-2-1);
	\draw[->] (m-1-2) -- node {$g^*$} (m-2-2);
	\draw[->] (m-2-1) -- node[swap] {$f^*$} (m-2-2);
	\draw[->] (m-1-1) -- node {$c^*$} (m-2-2);
	\end{tikzpicture}\end{equation} in $\LTop$ and then take the relative pullback 
	\begin{equation}\label{eq:CSSpullback2}
	\begin{tikzpicture}
	\node (m-1-1) at (-4,2) {$\overset{W}{\overbrace{(a_*)_!X\times_{(c_*)_!Z}(b_*)_!Y}}$};
	\node (m-1-2) at (-1,2) {$Y$};
	\node (m-2-1) at (-4,0) {$X$};
	\node (m-2-2) at (-1,0) {$Z$};
	\node (m-1-1') at (2,2) {$\CSS_k(\mc{W})$};
	\node (m-1-2') at (5,2) {$\CSS_k(\mc{Y})$};
	\node (m-2-1') at (2,0) {$\CSS_k(\mc{X})$};
	\node (m-2-2') at (5,0) {$\CSS_k(\mc{Z})$};
	\node (in) at (.5,1) {$\in $};
	\draw[->] (m-1-1) --  (m-1-2);
	\draw[->] (m-1-1) --  (m-2-1);
	\draw[->] (m-1-2) --  (m-2-2);
	\draw[->] (m-2-1) --  (m-2-2);
	\draw[->] (m-1-1') --  (m-1-2');
	\draw[->] (m-1-1') --  (m-2-1');
	\draw[->] (m-1-2') --  (m-2-2');
	\draw[->] (m-2-1') --  (m-2-2');
	\end{tikzpicture}\end{equation}\end{subequations} in the fibre $\CSS_k(\mc{W})$ over $\mc{W}$ (here we have dropped the abstract multi-indices on $X_{\bullet,\dots,\bullet}$, $Y_{\bullet,\dots,\bullet}$, and $Z_{\bullet,\dots,\bullet}$).
	
	For any $\infty$-category $\mc{C}$, let \begin{subequations}\begin{equation}\label{eq:preEmb}(\Cat_\infty)_{/\mc{C}}\hookleftarrow\Pre(\mc{C})\end{equation} denote the functor which sends a presheaf over $\mc{C}$ to the corresponding right fibration over $\mc{C}$. Then \cite[Corollary 2.1.2.10]{Lurie:2009un} implies that \eqref{eq:preEmb} is equivalent to a reflective left localization of $(\Cat_\infty)_{/\mc{C}}$; in particular \eqref{eq:preEmb} is continuous (see also \cite[Theorem~4.5]{Gepner:2015ww}). Since the Yoneda embedding is continuous, and the forgetful functor $(\Cat_\infty)_{/\mc{C}}\to\Cat_\infty$ preserves pullbacks, it follows that the composite
	\begin{equation}\label{eq:pullbackPreservingYon}
		\begin{tikzpicture}
			\mmat[1em]{m}{\mc{C}&\Pre(\mc{C})&(\Cat_\infty)_{/\mc{C}}&\Cat_\infty\\
			c&&&\mc{C}_{/c}\\};
			\draw[->] (m-1-1) -- node {$\yon$} (m-1-2);
			\draw[right hook->] (m-1-2) -- (m-1-3);
			\draw[->] (m-1-3) --  (m-1-4);
			\draw[->] (m-2-1) -- (m-2-4);
		\end{tikzpicture}
	\end{equation}
	\end{subequations}
	preserves pullbacks.
	
	Applying \eqref{eq:overCatSpan1Pullback} to $W_{\bullet,\dots,\bullet}$ (the top left corner of \eqref{eq:CSSpullback2}) and using the continuity of \eqref{eq:pullbackPreservingYon}  yields a pullback diagram
	\begin{equation}\label{eq:pullbackSpan+}\begin{tikzpicture}
	\mmat{m}{\on{Fun}(\BSigma^{i_1,\dots,i_k},\mc{W})_{/W_{\bullet,\dots,\bullet}\circ \Pi_{i_1,\dots,i_k}} &\on{Fun}(\BSigma^{i_1,\dots,i_k},\mc{W})_{/(b_*)_!Y_{\bullet,\dots,\bullet}\circ \Pi_{i_1,\dots,i_k}}\\ \on{Fun}(\BSigma^{i_1,\dots,i_k},\mc{W})_{/(a_*)_!X_{\bullet,\dots,\bullet}\circ \Pi_{i_1,\dots,i_k}}& \on{Fun}(\BSigma^{i_1,\dots,i_k},\mc{W})_{/(c_*)_!Z_{\bullet,\dots,\bullet}\circ \Pi_{i_1,\dots,i_k}}\\};
	\draw[->] (m-1-1) -- (m-1-2);
	\draw[->] (m-1-1) -- (m-2-1);
	\draw[->] (m-1-2) -- (m-2-2);
	\draw[->] (m-2-1) -- (m-2-2);
	\end{tikzpicture}\end{equation}
	Now, since $$(a^*)_!:\on{Fun}(\BSigma^{i_1,\dots,i_k},\mc{W})\leftrightarrows\on{Fun}(\BSigma^{i_1,\dots,i_k},\mc{X}):(a_*)_!$$ is an adjunction, we have a pullback square
	$$\begin{tikzpicture}
	\mmat{m}{\on{Fun}(\BSigma^{i_1,\dots,i_k},\mc{W})_{/(a_*)_!X_{\bullet,\dots,\bullet}\circ \Pi_{i_1,\dots,i_k}} &\on{Fun}(\BSigma^{i_1,\dots,i_k},\mc{X})_{/X_{\bullet,\dots,\bullet}\circ \Pi_{i_1,\dots,i_k}}\\ \on{Fun}(\BSigma^{i_1,\dots,i_k},\mc{W})& \on{Fun}(\BSigma^{i_1,\dots,i_k},\mc{X})\\};
	\draw[->] (m-1-1) -- (m-1-2);
	\draw[->] (m-1-1) -- (m-2-1);
	\draw[->] (m-1-2) -- (m-2-2);
	\draw[->] (m-2-1) -- node {$(a^*)_!$} (m-2-2);
	\end{tikzpicture}$$
	Similarly, the right hand terms of \eqref{eq:pullbackSpan+} fit into analagous pullback squares. It follows that $$\on{Fun}(\BSigma^{i_1,\dots,i_k},\mc{W})_{/W_{\bullet,\dots,\bullet}\circ \Pi_{i_1,\dots,i_k}}$$ fits into  a limit diagram
	\begin{equation}\label{eq:pullbackSpan+1}\begin{tikzpicture}
	\node (m-1-1-1) at (-2,4) {$\on{Fun}(\BSigma^{I},\mc{W})_{/W\circ \Pi_I}$};
	\node (m-1-1-2) at (-4,2) {$\on{Fun}(\BSigma^{I},\mc{X})_{/X\circ \Pi_I}$};
	\node (m-1-2-1) at (4,4) {$\on{Fun}(\BSigma^{I},\mc{Y})_{/Y\circ \Pi_I}$};
	\node (m-1-2-2) at (2,2) {$\on{Fun}(\BSigma^{I},\mc{Z})_{/Z\circ \Pi_I}$};
	\node (m-2-1-1) at (-2,0) {$\on{Fun}(\BSigma^{I},\mc{W})$};
	\node (m-2-1-2) at (-4,-2) {$\on{Fun}(\BSigma^{I},\mc{X})$};
	\node (m-2-2-1) at (4,0) {$\on{Fun}(\BSigma^{I},\mc{Y})$};
	\node (m-2-2-2) at (2,-2) {$\on{Fun}(\BSigma^{I},\mc{Z})$};
	\draw[->] (m-1-1-1) -- (m-1-1-2);
	\draw[->] (m-1-1-1) -- (m-1-2-1);
	\draw[->] (m-1-1-1) -- (m-2-1-1);
	\draw[->] (m-1-1-2) -- (m-1-2-2);
	\draw[->] (m-1-1-2) -- (m-2-1-2);
	\draw[->] (m-1-2-1) -- (m-1-2-2);
	\draw[->] (m-1-2-1) -- (m-2-2-1);
	\draw[->] (m-2-1-1) -- node[swap] {$a^*_!$} (m-2-1-2);
	\draw[->] (m-2-1-1) -- node {$b^*_!$} (m-2-2-1);
	\draw[->] (m-1-2-2) -- (m-2-2-2);
	\draw[->] (m-2-1-2) -- node[swap] {$f^*_!$}(m-2-2-2);
	\draw[->] (m-2-2-1) -- node {$g^*_!$} (m-2-2-2);
\end{tikzpicture}\end{equation}
where we have abbreviated the multi-index $i_1,\dots,i_k=I$ and dropped the abstract multi-indices on $W_{\bullet,\dots,\bullet}$, $X_{\bullet,\dots,\bullet}$, $Y_{\bullet,\dots,\bullet}$, and $Z_{\bullet,\dots,\bullet}$.

Since the bottom square in \eqref{eq:pullbackSpan+1} is already a pullback square, it follows that the top square is also a pullback square, which proves that \eqref{eq:overCatSpan1Pullback} preserves pullbacks.

Finally, we have $\on{Span}_k= U_{\on{Seg}}\circ\iota\circ (\on{SPAN}^+_k)^{/s}$, and since $U_{\on{Seg}}$ and $\iota$ are both right adjoints, they are continuous, which implies the statement we wished to prove.
	
\end{proof}

\begin{Theorem}\label{thm:SpanCont}
The functor \eqref{eq:locSpan},
$$\int\CSS_k\xrightarrow{(\mc{X}, X_{\bullet,\dots,\bullet})\mapsto \on{Span}_k(\mc{X},X_{\bullet,\dots,\bullet})}\CSS_k(\hat\Spc),$$ 
preserves small limits.	
\end{Theorem}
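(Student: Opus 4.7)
The plan is to combine the two preceding lemmas, \textbf{Lemma~\ref{lem:SpanProduct}} and \textbf{Lemma~\ref{lem:SpanPullback}}, with the standard fact that a functor between $\infty$-categories admitting small products and pullbacks preserves all small limits if and only if it preserves small products and preserves pullbacks. This reduction is a mild generalization of \cite[Proposition~4.4.2.6]{Lurie:2009un}: every small limit diagram $q : K \to \mc{C}$ can be rewritten, up to equivalence, as an iterated construction involving a single small product (over the vertices of $K$) and a pullback (encoding the constraints from the edges and higher simplices of $K$). A functor that preserves both pieces of data therefore preserves $\lim q$.

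First I would verify that both the source and target of \eqref{eq:locSpan} admit all small limits. For the source $\int\CSS_k$, this is exactly the content of the remark following Lemma~\ref{lem:CSSPres}: since $\int\CSS_k \to \LTop$ is a presentable fibration and $\LTop$ is complete, $\int\CSS_k$ inherits all small limits. For the target, $\CSS_k(\hat\Spc)$ is a reflective (accessible) localization of the presentable $\infty$-category $\on{Fun}((\BDelta^k)^{op},\hat\Spc)$ and is thus complete. With both ambient $\infty$-categories complete, the reduction of general small limits to small products and pullbacks is available on both sides.

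Next I would apply this reduction: given an arbitrary small diagram $q:K\to \int\CSS_k$, one expresses $\lim q$ as the pullback of a suitable pair of arrows between small products of objects in the image of $q$. Lemma~\ref{lem:SpanProduct} shows that the functor $\on{Span}_k$ sends each such product to the corresponding product in $\CSS_k(\hat\Spc)$; Lemma~\ref{lem:SpanPullback} then shows that it sends the pullback of those two product maps to the pullback in $\CSS_k(\hat\Spc)$. Composing these two preservation statements yields that $\on{Span}_k(\lim q) \simeq \lim (\on{Span}_k \circ q)$, which is exactly continuity.

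I do not anticipate any real obstacle: the substantive work has already been done in the two preceding lemmas, and the only task is to invoke the product-plus-pullback generation of small limits in a complete $\infty$-category. The mildest subtlety is to be careful that the pullbacks used to build a general small limit live in the appropriate ambient category (here $\int\CSS_k$, not merely a fibre $\CSS_k(\mc{X})$), but this is exactly the setup in which Lemma~\ref{lem:SpanPullback} was proved, so the argument goes through without change.
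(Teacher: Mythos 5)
Your proposal is correct and follows essentially the same route as the paper: the paper's proof likewise invokes the standard reduction of small limits to small products and pullbacks (citing \cite[Proposition~4.4.2.7]{Lurie:2009un}) and then concludes immediately from Lemmas~\ref{lem:SpanProduct} and \ref{lem:SpanPullback}. Your additional remarks on completeness of source and target and on working in $\int\CSS_k$ rather than a fibre are consistent with the paper's setup and do not change the argument.
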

\begin{remark}[Warning!]
	The inclusion of a fibre $\CSS_k(\mc{X})\hookrightarrow\int\CSS_k$ doesn't preserve products or terminal objects (though it does preserve small limits with connected diagrams). So the functor  $$\CSS_k(\mc{X})\xrightarrow{X_{\bullet,\dots,\bullet}\mapsto \on{Span}_k(\mc{X},X_{\bullet,\dots,\bullet})}\CSS_k(\hat\Spc)$$ is \emph{not} continuous: while it does preserve small limits with connected diagrams, it generally fails to preserve products or terminal objects.
\end{remark}
\begin{proof}
According to \cite[Proposition~4.4.2.7]{Lurie:2009un}, it suffices to prove this result for pullbacks and small products; thus the result follows from Lemmas~\ref{lem:SpanProduct} and \ref{lem:SpanPullback}.
\end{proof}

\begin{Theorem}\label{thm:StackofSpans}
	Suppose that  $\mc{X}$ is an $\infty$-topos  and $\sigma:\mc{X}^{op}\to \int\CSS_k$ is a continuous functor fitting into the diagram 
	$$\begin{tikzpicture}
	\node (x) at (-2,1) {$\mc{X}^{op}$};
	\node (css) at (2,1) {$\int\CSS_k$};
	\node (ltop) at (0,-1) {$\LTop$};
	\draw[->] (x) -- node {$\sigma$} (css);
	\draw[->] (x) -- (ltop);
	\draw[->] (css) -- (ltop);
\end{tikzpicture}$$
where the left diagonal arrow $\mc{X}^{op}\xrightarrow{U\mapsto \mc{X}_{/U}}\LTop$ is \eqref{eq:FXtopostoEtale}. Then 
\begin{equation}\label{eq:SpanLocStack}\on{Span}_k\circ\sigma:\mc{X}^{op}\xrightarrow{U\mapsto \on{Span}_k(\mc{X}_{/U},\sigma(U))}\CSS_k(\hat\Spc)\end{equation}
forms an $(\infty,k)$-stack
over $\mc{X}$.

In particular, given
any complete $k$-fold Segal space $X_{\bullet,\dots,\bullet}\in\CSS_k(\mc{X})$, iterated spans in $\mc{X}$ with local systems valued in $X_{\bullet,\dots,\bullet}$ form an $(\infty,k)$-stack
\begin{equation}\label{eq:ItSpanLocSheaf}\mc{X}^{op}\xrightarrow{U\mapsto \on{Span}_k(\mc{X}_{/U},U\times X_{\bullet,\dots,\bullet})}\CSS_k(\hat\Spc)\end{equation}
over $\mc{X}$.
\end{Theorem}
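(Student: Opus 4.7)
The first and primary assertion is almost immediate from the work already done. Since $\sigma$ is continuous by hypothesis and Theorem~\ref{thm:SpanCont} shows that $\on{Span}_k:\int\CSS_k\to\CSS_k(\hat\Spc)$ preserves small limits, the composite $\on{Span}_k\circ \sigma$ preserves small limits, which is precisely the definition of an $(\infty,k)$-stack over $\mc{X}$. No further work is required for this part.

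The substance of the proof therefore lies in the particular case \eqref{eq:ItSpanLocSheaf}: given a complete $k$-fold Segal object $X_{\bullet,\dots,\bullet}\in \CSS_k(\mc{X})$, we must exhibit a continuous functor $\sigma_X:\mc{X}^{op}\to \int\CSS_k$ which sits over \eqref{eq:EtaleOverX} and satisfies $\sigma_X(U)=(\mc{X}_{/U},\,U\times X_{\bullet,\dots,\bullet})$. The functor $\sigma_X$ itself is obtained by taking Cartesian lifts with respect to the Cartesian fibration $\int^{\acute{e}t}\CSS_k\to \LTop_{\acute{e}t}$: for each morphism $f:V\to U$ in $\mc{X}$, base change $f^*:\mc{X}_{/U}\to\mc{X}_{/V}$ sends $U\times X_{\bullet,\dots,\bullet}$ to $V\times X_{\bullet,\dots,\bullet}$, and these coherent Cartesian morphisms assemble into a functor $\sigma_X$ along the étale functor \eqref{eq:EtaleOverX}.

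To verify continuity of $\sigma_X$, we use the fact (established in the remark just before the statement) that $\int\CSS_k\to \LTop$ is a presentable fibration that admits all small limits and for which the projection preserves them. Consequently, showing that $\sigma_X$ sends a colimit diagram $U=\colim_{i\in I} U_i$ in $\mc{X}$ to a limit diagram in $\int\CSS_k$ reduces to two sub-checks. First, the underlying diagram $U\mapsto \mc{X}_{/U}$ sends the colimit in $\mc{X}$ to a limit in $\LTop_{\acute{e}t}$; this is precisely the continuity of \eqref{eq:EtaleOverX} noted in the proof of Theorem~\ref{thm:CSSaSheaf} (via its factorization \eqref{eq:EtaleOverXEquiv} through a slice category, using \cite[Proposition~1.2.13.8]{Lurie:2009un}). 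Second, one must check that the vertex $U\times X_{\bullet,\dots,\bullet}\in \CSS_k(\mc{X}_{/U})$ is the $p$-relative limit of the system $\{U_i\times X_{\bullet,\dots,\bullet}\}_i$ in the fibre. This is where Theorem~\ref{thm:CSSaSheaf} is invoked directly: applying that theorem to the limit diagram $\mc{X}_{/U}=\lim_i\mc{X}_{/U_i}$ gives the equivalence $\CSS_k(\mc{X}_{/U})\simeq \lim_i \CSS_k(\mc{X}_{/U_i})$, under which $U\times X_{\bullet,\dots,\bullet}$ corresponds to the compatible system $(U_i\times X_{\bullet,\dots,\bullet})_i$, since base change along each $\mc{X}_{/U}\to \mc{X}_{/U_i}$ sends $U\times X_{\bullet,\dots,\bullet}$ to $U_i\times X_{\bullet,\dots,\bullet}$ by construction of $\sigma_X$.

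The main obstacle would have been establishing the continuity of $\on{Span}_k$ in the general $(\mc{X},X_{\bullet,\dots,\bullet})$-dependence (Theorem~\ref{thm:SpanCont}) and the sheaf property of $\CSS_k$ (Theorem~\ref{thm:CSSaSheaf}); both are already in hand, so the present theorem amounts essentially to assembling these ingredients and checking, in the special case, that the obvious Cartesian section has the expected continuity property.
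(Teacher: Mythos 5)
Your handling of the first assertion coincides with the paper's: it follows immediately from Theorem~\ref{thm:SpanCont} and the hypothesis that $\sigma$ is continuous. For the special case \eqref{eq:ItSpanLocSheaf} your overall strategy is also the paper's --- exhibit the section of $\int^{\acute{e}t}\CSS_k$ determined by $X_{\bullet,\dots,\bullet}$ and verify its continuity via Theorem~\ref{thm:CSSaSheaf} --- but note a terminological slip that is not innocent: the edges you write down, given by base change $f^*$ (so that $U\times X_{\bullet,\dots,\bullet}\mapsto V\times X_{\bullet,\dots,\bullet}$), are the \emph{coCartesian} edges of $\int^{\acute{e}t}\CSS_k\to\LTop_{\acute{e}t}$, not the Cartesian ones; the Cartesian transition functors are the right adjoints $(f_*)_!$, and $(f_*)_!(V\times X_{\bullet,\dots,\bullet})$ is not $U\times X_{\bullet,\dots,\bullet}$ in general. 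So $\sigma_X$ is a coCartesian section, which is exactly how the paper produces it (via $\mc{X}^{op}\cong(\LTop_{\acute{e}t})_{\mc{X}/}$, the initial object $\mc{X}$, and \cite[Proposition~3.3.3.1]{Lurie:2009un}).

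The genuine gap is in the continuity argument: the reduction to your two sub-checks is asserted, not justified, and the justification is where the content lies. A cone in the total space of a Cartesian fibration is a limit diagram when its image in the base is a limit diagram (your check (a)) \emph{and} it is a $p$-limit diagram; for a Cartesian fibration the latter is tested by pulling the objects $U_i\times X_{\bullet,\dots,\bullet}$ back along the \emph{Cartesian} edges into the fibre $\CSS_k(\mc{X}_{/U})$ and computing the fibrewise limit, i.e.\ one must show $U\times X_{\bullet,\dots,\bullet}\cong\lim_i(\rho_{i*})_!(U_i\times X_{\bullet,\dots,\bullet})$. Your check (b) verifies something different: that $U\times X_{\bullet,\dots,\bullet}$ corresponds to the compatible system $(U_i\times X_{\bullet,\dots,\bullet})_i$ under the equivalence $\CSS_k(\mc{X}_{/U})\simeq\lim_i\CSS_k(\mc{X}_{/U_i})$ of Theorem~\ref{thm:CSSaSheaf}, which is merely a restatement of the fact that the section is coCartesian over the cone. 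To pass from (a) and (b) to the conclusion you need the lemma that a coCartesian section of a presentable fibration over a cone whose coCartesian straightening is a limit diagram of $\infty$-categories is itself a limit diagram in the total $\infty$-category. This is true, and it is precisely what the paper extracts from \cite[Lemma~6.3.3.5]{Lurie:2009un}; but it is not a formal consequence of your two sub-checks as stated, so you should either prove it (e.g.\ by computing mapping spaces out of an arbitrary object of the fibre over $\mc{X}_{/U}$ using the coCartesian pushforwards and the equivalence of fibres) or cite it.
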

\begin{proof}
The first statement is equivalent to the continuity of \eqref{eq:SpanLocStack}, which follows directly from Theorem~\ref{thm:SpanCont}.

Let $F:\mc{X}^{op}\to\LTop_{\acute{e}t}$ be defined by \eqref{eq:FXtopostoEtale}. Of course, we have $\mc{X}^{op}\cong (\LTop_{\acute{e}t})_{\mc{X}/}$, so $F^*\big(\int^{\acute{e}t}\CSS_k\big)$ is equivalent to the pullback
$$\begin{tikzpicture}
	\mmat{m}{F^*\big(\int^{\acute{e}t}\CSS_k\big)&\int^{\acute{e}t}\CSS_k\\
	(\LTop_{\acute{e}t})_{\mc{X}/}&\LTop_{\acute{e}t}\\};
	\draw[->] (m-1-1) --  (m-1-2);
	\draw[->] (m-1-1) --  (m-2-1);
	\draw[->] (m-1-2) --  (m-2-2);
	\draw[->] (m-2-1) -- node[swap] {$F$} (m-2-2);
\end{tikzpicture}$$  
Since $\mc{X}\in (\LTop_{\acute{e}t})_{\mc{X}/}$ is an initial object, \cite[Proposition~3.3.3.1]{Lurie:2009un} and Theorem~\ref{thm:CSSaSheaf} imply that the $\infty$-category of coCartesian sections of $F^*\big(\int^{\acute{e}t}\CSS_k\big)\to (\LTop_{\acute{e}t})_{\mc{X}/}\cong \mc{X}^{op}$ is equivalent to $\CSS_k(\mc{X})$. In particular, any complete $k$-fold complete Segal object $X_{\bullet,\dots,\bullet}\in\CSS_k(\mc{X})$ determines a coCartesian section $\sigma:\mc{X}^{op}\to F^*\big(\int^{\acute{e}t}\CSS_k\big)$. By \cite[Lemma~6.3.3.5]{Lurie:2009un}, $\sigma$ is continuous, which implies that \eqref{eq:ItSpanLocSheaf} is an $(\infty,k)$-stack.
\end{proof}

\bibliography{basicbib}{}

\begin{thebibliography}{10}

\bibitem{Ara:2012uj}
Dimitri Ara.
\newblock {Higher quasi-categories vs higher Rezk spaces}.
\newblock {\em Journal of K-Theory}, 14(3):701, 2014.

\bibitem{Barwick:2005wl}
Clark Barwick.
\newblock {\em {($\infty$,n)-Cat as a closed model category}}.
\newblock PhD thesis, 2005.

\bibitem{Barwick:2013vc}
Clark Barwick.
\newblock {From operator categories to topological operads}.
\newblock February 2013.
\newblock Preprint available at \url{http://arxiv.org/abs/1302.5756v2}.

\bibitem{Barwick:2013th}
Clark Barwick.
\newblock {On the Q construction for exact quasicategories}.
\newblock January 2013.
\newblock Preprint available at \url{http://arxiv.org/abs/1301.4725v2}.

\bibitem{Barwick:2011wl}
Clark Barwick and Christopher Schommer-Pries.
\newblock {On the Unicity of the Homotopy Theory of Higher Categories}.
\newblock November 2011.
\newblock Preprint available at \url{http://arxiv.org/abs/1112.0040v4}.

\bibitem{Calaque:2013un}
Damien Calaque.
\newblock {Lagrangian structures on mapping stacks and semi-classical TFTs}.
\newblock 2013.
\newblock Preprint available at \url{http://arxiv.org/abs/1306.3235}.

\bibitem{Gepner:2015ww}
David Gepner, Rune Haugseng, and Thomas Nikolaus.
\newblock {Lax colimits and free fibrations in $\infty$-categories}.
\newblock pages 1--33, January 2015.
\newblock Preprint available at \url{http://arxiv.org/abs/1501.02161}.

\bibitem{Haugseng:2014vw}
Rune Haugseng.
\newblock {Iterated spans and ``classical'' topological field theories}.
\newblock September 2014.
\newblock Preprint available at \url{http://arxiv.org/abs/1409.0837}.

\bibitem{Joyal:2008wr}
Andr{\'e} Joyal.
\newblock {The theory of quasi-categories and its applications}.
\newblock 2008.
\newblock Preprint.

\bibitem{LiBland:2014tf}
David Li-Bland and Alan Weinstein.
\newblock {Selective Categories and Linear Canonical Relations}.
\newblock {\em SIGMA. Symmetry, Integrability and Geometry. Methods and
  Applications}, 10:1--31, October 2014.
\newblock 46 pages.

\bibitem{Lurie:ZsNVU6Ru}
Jacob Lurie.
\newblock {Derived Algebraic Geometry X: Formal Moduli Problems }.
\newblock Preprint available at
  \url{http://www.math.harvard.edu/\~lurie/papers/DAG-X.pdf}.

\bibitem{Lurie:0uBkkKfz}
Jacob Lurie.
\newblock {Higher Algebra}.
\newblock Preprint available at
  \url{http://www.math.harvard.edu/\~lurie/papers/higheralgebra.pdf}.

\bibitem{Lurie:2009un}
Jacob Lurie.
\newblock {\em {Higher topos theory}}, volume 170 of {\em Annals of Mathematics
  Studies}.
\newblock Princeton University Press, Princeton, NJ, 2009.

\bibitem{Lurie:2009uz}
Jacob Lurie.
\newblock {($\infty$,2)-Categories and the Goodwillie Calculus I}.
\newblock pages 1--183, May 2009.
\newblock Preprint available at
  \url{http://www.math.harvard.edu/\~lurie/papers/GoodwillieI.pdf}.

\bibitem{Pantev:2011uz}
Tony Pantev, Bertrand To{\"e}n, Michel Vaqui{\'e}, and Gabriele Vezzosi.
\newblock {Shifted symplectic structures}.
\newblock {\em Institut des Hautes \'Etudes Scientifiques. Publications
  Math\'ematiques}, 117:271--328, 2013.

\bibitem{Rezk:2001ey}
Charles Rezk.
\newblock {A model for the homotopy theory of homotopy theory}.
\newblock {\em Transactions of the American Mathematical Society},
  353(3):973--1007 (electronic), 2001.

\bibitem{Riehl:2014ut}
Emily Riehl and Dominic Verity.
\newblock {Completeness results for quasi-categories of algebras, homotopy
  limits, and related general constructions}.
\newblock {\em Homology, Homotopy and Applications}, 17:1--33, 2015.

\bibitem{Weinstein82}
Alan Weinstein.
\newblock {The symplectic ``category''}.
\newblock {\em Differential geometric methods in mathematical physics
  (Clausthal, 1980), Springer Lecture Notes in Mathematics}, 905:45--51.

\bibitem{Weinstein:1979vn}
Alan Weinstein.
\newblock {\em {Lectures on symplectic manifolds}}, volume~29 of {\em CBMS
  Regional Conference Series in Mathematics}.
\newblock American Mathematical Society, Providence, R.I., 1979.
\newblock Corrected reprint.

\end{thebibliography}
\bibliographystyle{plain}
\end{document}